\newcommand{\R}{\mathbb{R}}
\newcommand{\Om}{\Omega}
\newtheorem{theorem}{Theorem}
\theoremstyle{plain}
\newtheorem{corollary}[theorem]{Corollary}
\newtheorem{lemma}{Lemma}
\newtheorem{remark}[theorem]{Remark}
\begin{document}

\title[Minimal generating sets of moves for isotopic diagrams]{Minimal generating sets of moves for diagrams of isotopic knots and spatial trivalent graphs}


\author{Carmen Caprau}
\address{Department of Mathematics, California State University, Fresno \linebreak 5245 North Backer Avenue, M/S PB108, CA 93740, USA}
\email{ccaprau@csufresno.edu}

\author{Bradley Scott}
\address{Department of Mathematics, California State University, Fresno\linebreak 5245 North Backer Avenue, M/S PB108, CA 93740, USA}
\email{bscott@gpusd.org}

\subjclass[2020]{57K10, 57K12}
\keywords{Knot diagrams; Reidemeister moves; spatial trivalent graphs.}
\thanks{CC was partially supported by Simons Foundation grant $\#355640$}

\begin{abstract}
Polyak proved that all oriented versions of Reidemeister moves for knot and link diagrams can be generated by a set of just four oriented Reidemeister moves, and that no fewer than four oriented Reidemeister moves generate them all. We refer to a set containing four oriented Reidemeister moves that collectively generate all of the other oriented Reidemeister moves as a minimal generating set. Polyak also proved that a certain set containing two Reidemeister moves of type 1, one move of type 2, and one move of type 3 form a minimal generating set for all oriented Reidemeister moves. We expand upon Polyak's work by providing an additional eleven minimal, 4-element, generating sets of oriented Reidemeister moves, and we prove that these twelve sets represent all possible minimal generating sets of oriented Reidemeister moves.  We also consider the Reidemeister-type moves that relate oriented spatial trivalent graph diagrams with trivalent vertices that are sources and sinks and prove that a minimal generating set of oriented Reidemeister-type moves for spatial trivalent graph diagrams contains ten moves. 
\end{abstract}

\maketitle

\section{Introduction}

A \textit{link} with $n$ components is an equivalence class of smooth embeddings in $\mathbb{R}^3$ of $n$ copies of $S^1$, and a \textit{knot} is a one-component link. We will use the term `knot' to refer generically to both knots and links. A diagram of a knot $K$ is a generic projection of $K$ on $\mathbb{R}^2$ which contains transversal double points with overcrossing or undercrossing information.
A  well-known result~\cite{AB, Reid} states that two knot diagrams represent the same knot if and only if they are connected via planar isotopy and a finite sequence of \textit{Reidemeister moves} $\Om1, \Om2$ and $\Om3$ depicted in Figure~\ref{SpatialMoves}.
The Reidemeister moves~\cite{Reid} are local transformations on a knot diagram applied inside a disk in $\mathbb{R}^2$, and which keep the diagram unchanged outside of the disk. We refer to this disk as the \textit{localized disk}.

A \textit{knot invariant} is a quantity defined on the set of all knots which preserves the knot-type. In demonstrating that a certain quantity is a knot invariant, the quantity must be invariant under the moves $\Om1, \Om2$ and $\Om3$. Hence, Reidemeister moves play an important role when defining knot invariants.
When we consider oriented knots and their diagrams, we must work with oriented versions of the Reidemeister moves. There are four different versions of each of the moves $\Om1$ and $\Om2$, and eight different versions of the moves $\Om3$ (see Figures~\ref{Type1Moves},~
\ref{Type2Moves} and~\ref{Type3Moves}, respectively). With 16 oriented versions of Reidemeister moves, it can become tedious to check if a certain quantity defined on oriented knot diagrams yields an invariant for oriented knots. Hence, it is useful to have a generating set of oriented Reidemeister moves to minimize the work required to check for invariance.  A set of oriented Reidemeister moves $S$ is a \emph{generating set} if any oriented Reidemeister move $\Om$ may be realized by a finite sequence of planar isotopies and the moves in $S$ applied inside the localized disk of $\Om$.

Generating sets of oriented Reidemeister moves were studied by Polyak~\cite{Pol}, who proved that there exists a certain generating set of four moves containing two $\Om1$ moves, one $\Om2$ move, and one $\Om3$ move. We refer to the corresponding four moves, $\{\Om1a, \Om1b, \Om2a, \Om3a\}$,  as the \textit{Polyak moves}. He also proved that any generating set of oriented Reidemeister moves must contain at least two $\Om1$ moves. From here, we conclude that any generating set of oriented Reidemeister moves must contain at least two $\Om1$ moves and at least one move of type $\Om2$ and another of type $\Om3$. In this paper we refer to a set of four oriented Reidemeister moves that generate all of the other oriented Reidemeister moves as a \textit{minimal generating set}. The term `minimal' here refers to the fact that there are no sets of oriented Reidemeister moves of cardinality less than four that generate all of the 16 oriented Reidemeister moves. 

Polyak's result was long due, because in previous knot theory works there were inconsistencies on the number of moves required in a generating set of all oriented Reidemeister moves, and claims were provided without proofs.

In this paper we expand on Polyak's work and prove that there are exactly 12 minimal generating sets of oriented Reidemeister moves for knot diagrams. Another goal of this paper is to extend the notion of a generating set to include the moves for oriented spatial trivalent graph diagrams. A \emph{spatial trivalent graph} is an embedding in $\R^3$ of a graph with finitely many trivalent vertices. Every knot is a minor of a spatial trivalent graph, where we view a knot as a single vertex with a single loop edge. 
Thus the \textit{Reidemeister-type moves} for spatial trivalent graph diagrams include the (classical) Reidemeister moves $\Om1, \Om2$ and $\Om3$ together with the additional moves $\Om4$ and $\Om5$ involving trivalent vertices (see Figure~\ref{SpatialMoves}). Specifically, two diagrams of spatial trivalent graphs represent the same graph if and only if there is a finite sequence of the moves $\Om1$ through $\Om5$, along with planar isotopies, taking one diagram onto the other; a proof of this classical result can be found in~\cite[Section 2]{Ca}. For details on spatial graphs, we refer the reader to Kauffman's work in~\cite{KIG, KKP}. Just as for the Reidemeister moves, the moves $\Om4$ and $\Om5$ are local moves.

\begin{figure}[h]
\begin{center}
\includegraphics[scale=.2]{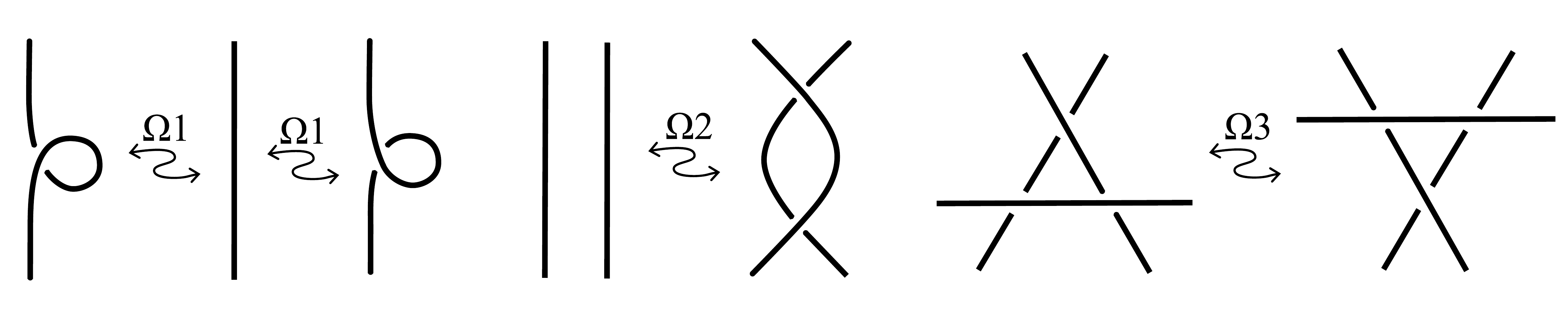}
\includegraphics[scale=.18]{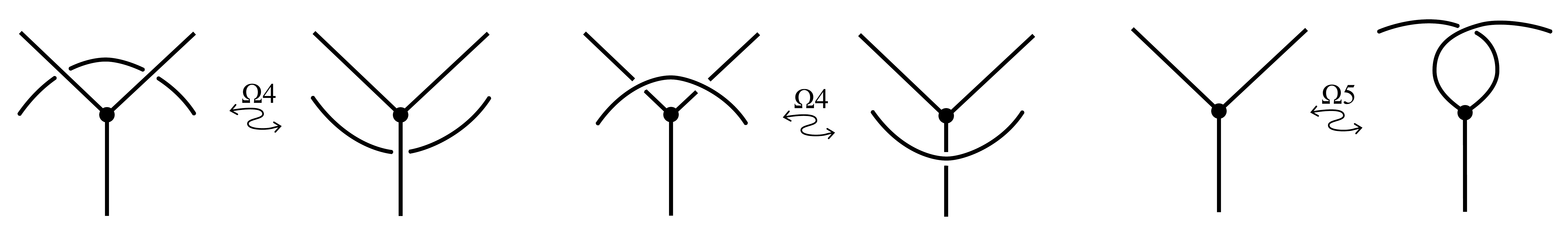}
\end{center}
\caption{Reidemeister-type moves for spatial trivalent graph diagrams}
\label{SpatialMoves}
\end{figure}

We consider oriented spatial trivalent graphs with trivalent vertices which are either sinks or sources (see Figure~\ref{sourcesink}). There are eight versions of oriented $\Om4$ moves and four versions of the oriented $\Om5$ moves (see Figures~\ref{Type4Moves} and ~\ref{Type5Moves}), and hence there are 28 versions of Reidemeister-type moves for oriented spatial trivalent graph diagrams with vertices that are sinks and sources. The second goal of this paper is to provide all minimal generating sets of oriented Reidemeister-type moves for spatial trivalent graph diagrams. We prove that such a set contains 10 moves (two $\Om1$ moves, one $\Om2$ move, one $\Om3$ move,  four $\Om4$ moves, and two $\Om5$ moves).

For the classical Reidemeister moves, when we distinguish between forward and backward moves, we obtain 32 different oriented Reidemeister moves, which Suwara~\cite{Su} called \textit{directed oriented Reidemeister moves}. In~\cite{Su}, Suwara proved that the eight directed Polyak moves form a minimal generating set of directed oriented Reidemeister moves. 
We note that a generating set of oriented Reidemeister-type moves for singular links was provided in~\cite{BEHY}. A singular link can also be regarded as an embedding in $\mathbb{R}^3$ of a 4-valent graph with rigid vertices; descriptions of rigid-vertex isotopies can be found in~\cite{KIG, KKP}.

The paper is organized as follows. In Section~\ref{sec:ReidMoves} we describe the 16 different oriented versions of the Reidemeister moves $\Om1, \Om2$ and $\Om3$, and how we differentiate them. In Section~\ref{sec:GenSets} we provide two collections $\mathcal{A}$ and $\mathcal{H}$, each of which contains six 4-element sets of Reidemeister moves. Each of the 4-element sets contains two $\Om1$ moves, one $\Om2$ move, and one $\Om3$ move, and we prove that each of the 12 sets in $\mathcal{A} \cup \mathcal{H}$ generates all of the oriented Reidemeister moves. The unique $\Om3$ move in each set in $\mathcal{A}$ is $\Om3a$, and the unique $\Om3$ move in each set in $\mathcal{H}$ is $\Om3h$. Moreover, the unique $\Om2$ move in any of the sets in $\mathcal{A} \cup \mathcal{H}$ is either $\Om2a$ or $\Om2b$.  In Section~\ref{sec:NonGenSets} we prove that the sets in collections $\mathcal{A}$ and $\mathcal{H}$ are minimal generating sets, and that they are all of the minimal generating sets of oriented Reidemeister moves. We extend these results in Section~\ref{sec:GeneratingSets-graphs}, where we find minimal generating sets of oriented Reidemeister-type moves for oriented spatial trivalent graph diagrams with vertices that are either sources or sinks.

\section{Minimal generating sets of Reidemeister moves} \label{generating sets}

We start by describing the oriented Reidemeister moves for knot diagrams and explaining how we differentiate between the different oriented versions of the moves. Then we provide twelve generating sets of oriented Reidemeister moves and prove that they indeed generate all of the moves. We also prove that these twelve sets are the only generating sets containing four oriented Reidemeister moves.

\subsection{The Oriented Reidemeister Moves} \label{sec:ReidMoves}

Considering all possible orientations, there are four type 1 moves, $\Om1$, four type 2 moves, $\Om2$, and eight type 3 moves, $\Om3$, which we label with the same conventions used by Polyak in~\cite{Pol}. 

The four oriented Reidemeister type 1 moves (shown in Figure~\ref{Type1Moves}) differ by the type of crossing introduced and the orientation of the resulting loop. The moves $\Om1a$ and $\Om1b$  involve a positive crossing and a clockwise and counter-clockwise oriented loop, respectively. Similarly, moves $\Om1c$ and $\Om1d$ involve a negative crossing and a clockwise and counter-clockwise oriented loop, respectively.
\begin{figure}[h]
\begin{center}
\includegraphics[scale=.22]{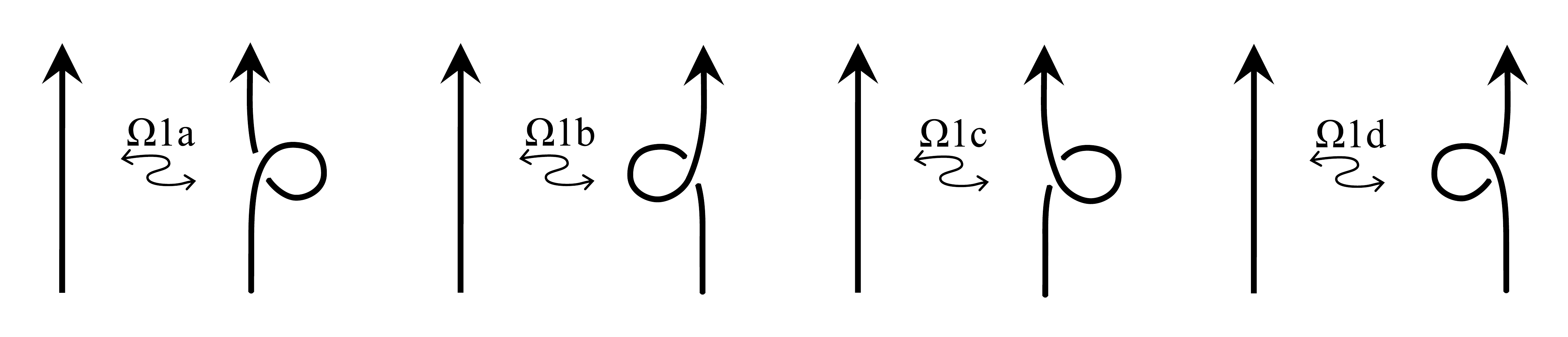}
\end{center}
\caption{Oriented Reidemeister moves of type 1}
\label{Type1Moves}
\end{figure}

The Reidemeister moves of type 2, depicted in Figure~\ref{Type2Moves},  introduce or remove two crossings in a knot diagram (depending on the direction of the move), which are transverse double points bounding a bigon in the underlying graph. The moves $\Om2a$ and $\Om2b$ introduce or remove a disoriented bigon, while the moves $\Om2c$ and $\Om2d$ introduce or remove a well-oriented bigon. The moves $\Om2a$ and $\Om2b$ are different because when following the orientations of the edges forming the disoriented bigon, in the move $\Om2a$ there is first a positive crossing followed by a negative crossing, while in the move $\Om2b$ we first see a negative crossing. The move $\Om2c$ introduces or removes a counter-clockwise oriented bigon, and the move $\Om2d$ introduces or removes a clockwise oriented bigon.
\begin{figure}[h]
\begin{center}
\includegraphics[scale=.2]{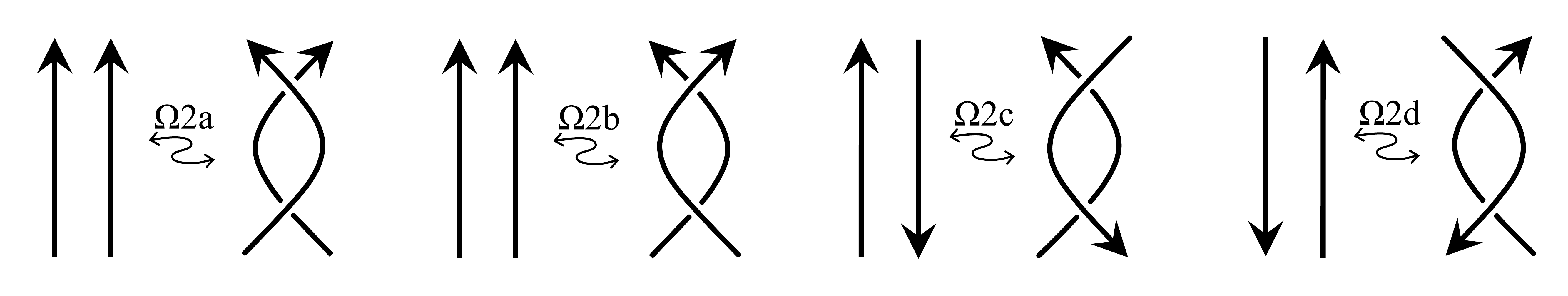}
\end{center}
\caption{Oriented Reidemeister moves of type 2}
\label{Type2Moves}
\end{figure}

The eight oriented Reidemeister moves of type 3, shown in Figure~\ref{Type3Moves}, are described by the number and position of positive and negative crossings involved in the move and the orientation of the enclosed region. Considering the underlying graph of the knot diagram in the localized disk of an oriented $\Om3$ move, a well-oriented or disoriented triangle is formed by the three edges joining the three transverse double points. 
The moves $\Om3a$, $\Om3d$ and $\Om3e$ each have two positive crossings and one negative crossing, but the move $\Om3a$ involves a well-oriented triangle while moves $\Om3d$ and $\Om3e$ involve disoriented triangles. In addition, in the move $\Om3d$ a strand slides over the negative crossing, but in the move $\Om3e$ a strand slides under the negative crossing.

In a similar fashion, we can distinguish the moves $\Om3c$, $\Om3f$ and $\Om3h$, which each involve two negative crossings and one positive crossing. The move $\Om3h$ applies to diagrams with a well-oriented triangle, while the moves $\Om3c$ and $\Om3f$ involve a disoriented triangle. In the move $\Om3c$ a strand slides over the positive crossing, and in the move $\Om3f$ a strand slides under the positive crossing.

 The remaining two moves, $\Om3b$ and $\Om3g$, involve three positive and negative crossings, respectively.
 \begin{figure}[h]
\begin{center}
\includegraphics[scale=.2]{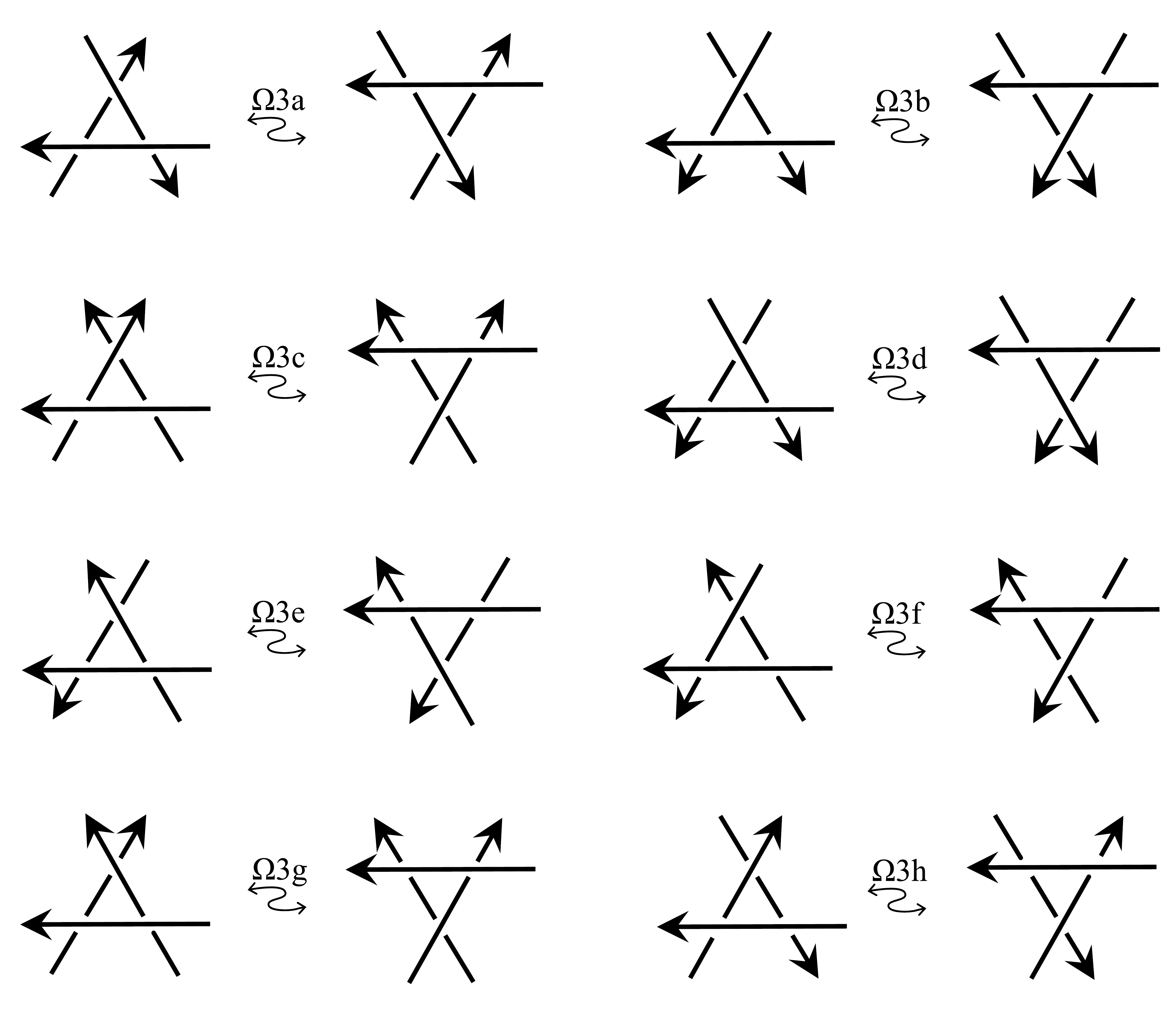}
\end{center}
\caption{Oriented Reidemeister moves of type 3}
\label{Type3Moves}
\end{figure}

\subsection{Generating the Oriented Reidemeister Moves} \label{sec:GenSets}

In~\cite{Pol}, Michael Polyak proves that a set of four specific oriented Reidemeister moves generates all of the other oriented Reidemeister moves for knot diagrams, and that this cannot be done with fewer than four moves.  We say that a generating set $S$ of oriented Reidemeister moves is minimal, if  there is no generating set $R$ of oriented Reidemeister moves such that $|R| < |S|$. In this section, we build upon Polyak's work by finding all of the minimal generating sets of oriented Reidemeister moves.

To begin, we present Polyak's minimal generating set below.

\begin{theorem}\cite[Theorem 1.1]{Pol}
Let $D$ and $D'$ be two diagrams in $\R^2$ representing the same oriented knot. Then one may pass from $D$ to $D'$ by isotopy and a finite sequence of four oriented Reidemeister moves $\Om1a, \Om 1b, \Om2a,$ and $\Om3a$.
\end{theorem}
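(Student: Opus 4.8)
The classical Reidemeister theorem quoted above reduces the assertion to the statement that $\{\Om1a, \Om1b, \Om2a, \Om3a\}$ is a generating set in the sense of the introduction: each of the remaining twelve oriented moves — $\Om1c$, $\Om1d$, $\Om2b$, $\Om2c$, $\Om2d$, and $\Om3b$ through $\Om3h$ — must be realized inside its localized disk by a finite sequence of planar isotopies and the four chosen moves. Since a Reidemeister move coincides with its reverse, having $\Om2a$ at our disposal lets us both create and destroy the corresponding disoriented bigon, and likewise for $\Om1a$, $\Om1b$, $\Om3a$. The plan is to verify this in three stages, in the following order, since each stage relies on the moves produced in the one before it: \emph{(Stage 1)} generate the three missing type~$2$ moves; \emph{(Stage 2)} using all four type~$2$ moves, generate the seven missing type~$3$ moves; \emph{(Stage 3)} using all type~$2$ and type~$3$ moves, generate $\Om1c$ and $\Om1d$.

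For \emph{Stage 1} the tool is a \emph{kink detour}: insert a small loop on one strand using $\Om1a$ or $\Om1b$, so that after a planar isotopy the local picture is reconfigured into one on which $\Om2a$ applies, and then remove the loop. For $\Om2b$, whose disoriented bigon carries the opposite crossing data to that of $\Om2a$, a single auxiliary kink accomplishes the change. For the coherently oriented bigons $\Om2c$ and $\Om2d$, an auxiliary kink makes a short subarc of one strand locally antiparallel to the other strand, so that $\Om2a$ together with one application of $\Om3a$ (to slide the kink past the crossing just created) yields the coherent bigon; the handedness of the kink and the side on which it sits select $\Om2c$ versus $\Om2d$. Each of these is a short explicit sequence, most cleanly displayed as a chain of labelled diagrams.

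\emph{Stage 2} is the combinatorial heart of the proof, and the step I expect to be the main obstacle. The guiding principle is that every oriented type~$3$ move is equivalent, modulo type~$2$ moves (and, occasionally, an auxiliary kink), to $\Om3a$: from the three-strand configuration of a move $\Om3x$ one uses type~$2$ moves to bring an auxiliary arc alongside one of the three crossings so that the local pattern becomes the well-oriented, two-positive-and-one-negative pattern of $\Om3a$; one applies $\Om3a$ to carry the strand across the triangle; and one then undoes the auxiliary type~$2$ moves and kinks. The difficulty is bookkeeping rather than concept: the seven remaining type~$3$ moves must be treated essentially case by case, tracking at each step the over/under data at every crossing and the orientation (well-oriented or disoriented) of the enclosed triangle, and checking that the inserted crossings cancel after the move. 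I would present this as a catalogue of diagram sequences built from one template, grouped by crossing-sign pattern — the ``two positive'', ``two negative'', and ``three of the same sign'' families of Section~\ref{sec:ReidMoves} — so that within each family the sequences differ only in small details.

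For \emph{Stage 3}, note first that an $\Om1c$ kink (negative crossing, clockwise loop) and an $\Om1b$ kink (positive crossing, counter-clockwise loop) placed adjacently on a strand contribute zero total writhe and zero total turning, hence are removable together by a short sequence of the already-available type~$2$ moves (and, if needed, a type~$3$ move), after a planar isotopy bringing the two loops into standard position. Thus $\Om1c$ is obtained by first introducing an $\Om1b$ kink next to the given $\Om1c$ kink and then cancelling the adjacent pair; the symmetric argument with $\Om1a$ in place of $\Om1b$ gives $\Om1d$. Combining the three stages shows that $\{\Om1a, \Om1b, \Om2a, \Om3a\}$ generates all sixteen oriented Reidemeister moves, which together with the classical theorem quoted above proves the statement. (The complementary fact that no smaller generating set exists — in particular that at least two type~$1$ moves are needed — is established separately by Polyak through the behavior of writhe and rotation number, and is not needed for the present statement.)
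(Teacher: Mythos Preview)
Your overall strategy --- reduce to showing that $\{\Om1a,\Om1b,\Om2a,\Om3a\}$ generates every oriented move, and do this by explicit realizations --- is the same as Polyak's and the paper's. The gap is in your staging, specifically in the claim that $\Om2b$ can be obtained in Stage~1 by ``a single auxiliary kink''. A kink on one of two \emph{parallel} strands produces a short subarc that is \emph{antiparallel} to the other strand (you have this reversed in your description of $\Om2c,\Om2d$ as well), so the only type~2 move applicable between that subarc and the neighbouring strand is $\Om2c$ or $\Om2d$, not $\Om2a$. If instead you apply $\Om2a$ somewhere along the unkinked portions you simply recover an $\Om2a$ bigon. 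To turn the kink-plus-$\Om2$ picture into an $\Om2b$ bigon one must slide a strand across the kink's crossing; the triangle involved is \emph{disoriented}, so the required $\Om3$ move is one of $\Om3b$--$\Om3g$, not $\Om3a$. In other words, $\Om2b$ cannot precede Stage~2.

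The paper (following Polyak) handles this by a different ordering: first derive $\Om2c$ and $\Om2d$ via a kink~$+~\Om2a~+~\Om3a$ sequence (your idea here is correct); then get $\Om1c,\Om1d$ immediately and cheaply --- each is a single $\Om2c$ or $\Om2d$ applied to one strand followed by undoing the outer twist with $\Om1b$ or $\Om1a$, no type~3 move and no Whitney-style pairwise cancellation needed; then realize $\Om3c$ from $\Om3a$ together with $\Om2c,\Om2d$; and only \emph{then} obtain $\Om2b$ via $\Om1d,\Om2c,\Om3c$. The remaining $\Om3$ moves are produced afterwards, each by sandwiching a known $\Om3$ move between an $(\Om2c,\Om2d)$ or $(\Om2a,\Om2b)$ pair, exactly along the lines of your Stage~2 template. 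So your plan is salvageable, but the dependency graph must be rewired: $\Om2c,\Om2d \to \Om3c \to \Om2b$, with $\Om1c,\Om1d$ coming right after $\Om2c,\Om2d$ by the two-move trick rather than at the end.
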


Polyak proved that these four moves generate all other oriented Reidemeister moves.  In constructing the remaining minimal generating sets and proving that they indeed generate all of the other oriented Reidemeister moves, we borrow and adapt some of Polyak's ideas.

We define the following two collections of six sets, each of which containing four oriented Reidemeister moves:
\begin{align*}
\mathcal{A} = \{&\{\Om1a, \Om1c, \Om2a, \Om3a\}, \{\Om1a, \Om1c, \Om2b, \Om3a\}, \{\Om1b, \Om1d, \Om2a, \Om3a\},\\
 &\{\Om1b, \Om1d, \Om2b, \Om3a\}, \{\Om1a, \Om1b, \Om2a, \Om3a\}, \{\Om1a, \Om1b, \Om2b, \Om3a\}\}\\
\mathcal{H} = \{&\{\Om1a, \Om1c, \Om2a, \Om3h\}, \{\Om1a, \Om1c, \Om2b, \Om3h\}, \{\Om1b, \Om1d, \Om2a, \Om3h\},\\
 &\{\Om1b, \Om1d, \Om2b, \Om3h\}, \{\Om1c, \Om1d, \Om2a, \Om3h\}, \{\Om1c, \Om1d, \Om2b, \Om3h\}\}
\end{align*}
Note that for each set $X$ in $\mathcal{A}$ the move $\Om3a \in X$, and for each set $X$ in $\mathcal{H}$ the move $\Om3h \in X$. With these two collections of sets, we state the main result of this section.

\begin{theorem}\label{P1Main}
If $X \in \mathcal{A} \cup \mathcal{H}$, then $X$ is a generating set for all oriented Reidemeister moves for knot diagrams.
\end{theorem}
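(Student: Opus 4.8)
The plan is to combine the theorem of Polyak quoted above with the symmetries of the collection of oriented Reidemeister moves, and then to close the remaining gap with a small number of explicit move sequences adapted from Polyak's. Two symmetries are available. Passing to the mirror image of a diagram (switching every crossing) carries oriented Reidemeister moves to oriented Reidemeister moves; on labels it induces the transpositions $\Om1a\leftrightarrow\Om1c$, $\Om1b\leftrightarrow\Om1d$, $\Om2a\leftrightarrow\Om2b$, $\Om3a\leftrightarrow\Om3h$ and a permutation of the remaining moves. Reversing the orientation of every strand is likewise such a symmetry, and it induces $\Om1a\leftrightarrow\Om1b$, $\Om1c\leftrightarrow\Om1d$, $\Om2a\leftrightarrow\Om2b$ while fixing $\Om3a$ and $\Om3h$. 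Since each of these operations is a bijection on diagrams carrying moves to moves, a set $S$ is a generating set if and only if its image under either operation is. One checks directly that the mirror symmetry maps $\mathcal{A}$ bijectively onto $\mathcal{H}$, so it suffices to show every set in $\mathcal{A}$ is a generating set; and the orientation-reversal symmetry pairs the sets of $\mathcal{A}$ among themselves (for instance $\{\Om1a,\Om1c,\Om2a,\Om3a\}$ with $\{\Om1b,\Om1d,\Om2b,\Om3a\}$, and $\{\Om1a,\Om1b,\Om2a,\Om3a\}$ with $\{\Om1a,\Om1b,\Om2b,\Om3a\}$), so only one representative of each pair must actually be treated.

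Second, I would reduce the assertion ``$X$ is a generating set'' to ``$X$ generates the four Polyak moves $\Om1a,\Om1b,\Om2a,\Om3a$''. This is legitimate because generation is transitive: if each Polyak move is realizable by $X$-moves inside its own localized disk, then any occurrence of a Polyak move in a sequence can be replaced, in place, by a sequence of $X$-moves, so the generating set $\{\Om1a,\Om1b,\Om2a,\Om3a\}$ furnished by Polyak's theorem is itself generated by $X$, and hence so are all $16$ moves. Each set in $\mathcal{A}$ already contains $\Om3a$ and exactly one of $\Om2a,\Om2b$, together with a pair of type~1 moves from $\{\Om1a,\Om1b\}$, $\{\Om1a,\Om1c\}$ or $\{\Om1b,\Om1d\}$. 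After the symmetry reduction, what must be proved therefore comes down to two facts: (i) from $\Om1a$, $\Om1c$, $\Om3a$ and a single type~2 move one can realize the positive counterclockwise kink $\Om1b$; and (ii) from $\Om1a$, $\Om1c$, $\Om3a$ and $\Om2b$ one can realize $\Om2a$.

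For each of (i) and (ii) the method is the one Polyak uses for his analogous reductions: introduce auxiliary kinks with the available type~1 moves and an auxiliary bigon with the available type~2 move, transport one feature past another by means of $\Om3a$, and then cancel the extra crossings, verifying at each stage that consecutive diagrams differ by a permitted move or by planar isotopy. For (i) the relevant bookkeeping is the pair $(\text{writhe},\ \text{rotation number of the affected arc})$: the moves $\Om1a$ and $\Om1c$, used in both directions, suffice to realize each of the four changes $(\pm1,\pm1)$ of this pair, which is precisely the room needed to build a $\Om1b$-kink and is also the reason that only the type~1 pairs occurring in $\mathcal{A}$ and $\mathcal{H}$ (and not, say, $\{\Om1a,\Om1d\}$) can belong to a generating set. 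For (ii) one converts the ``negative-then-positive'' disoriented bigon of $\Om2b$ into the ``positive-then-negative'' bigon of $\Om2a$ by pushing a strand carrying an auxiliary $\Om1a$- or $\Om1c$-kink through the bigon using $\Om3a$ and then deleting the auxiliary kink.

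The step I expect to be the main obstacle is the construction and verification of these explicit sequences, and above all the synthesis of a type~1 move absent from the set: in the $\{\Om1a,\Om1c\}$-sets the move $\Om1b$ must be manufactured from scratch, and one must check scrupulously that only $\Om1a$, $\Om1c$, $\Om3a$ and the chosen type~2 move (together with planar isotopy) appear at every step of that manufacture. Once facts (i) and (ii) are in place, the mirror and orientation-reversal symmetries transport the conclusion from the handful of representative sets to all twelve members of $\mathcal{A}\cup\mathcal{H}$, which proves the theorem.
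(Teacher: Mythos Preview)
Your plan is correct and takes a genuinely different route from the paper's. The paper never invokes the mirror or orientation-reversal symmetries; instead it proves a chain of six lemmas that build up the missing moves explicitly (first $\Om2c$ or $\Om2d$; then the absent $\Om1$ moves; then $\Om3c$ or $\Om3d$; then whichever of $\Om2a,\Om2b$ is absent; finally all remaining $\Om3$ moves), and these lemmas apply uniformly to every $X\in\mathcal{A}\cup\mathcal{H}$. Your strategy instead quotients by the two symmetries to reduce to three representatives in $\mathcal{A}$, one of which is already Polyak's set, and then reduces ``$X$ is generating'' to ``$X$ generates the four Polyak moves,'' invoking Polyak's theorem as a black box. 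This is more economical --- only your facts (i) and (ii) need explicit diagram sequences, versus roughly twenty in the paper --- whereas the paper's more laborious lemmas have the compensating advantage that they are reused heavily in the later section showing that no \emph{other} four-element set is generating.

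One caution about execution: your sketched one-step mechanism for (ii) is too optimistic as written. If you place an $\Om1a$- or $\Om1c$-kink on one of the two like-oriented strands in the localized disk of $\Om2a$, the outer arc of that kink is oriented \emph{opposite} to the second strand, so the move $\Om2b$ (which requires like-oriented strands) cannot be applied between them, and no well-oriented triangle for $\Om3a$ is immediately available either. In the paper's argument, realizing $\Om2a$ from $\{\Om1a,\Om1c,\Om2b,\Om3a\}$ in fact passes through $\Om2c$, then $\Om1b$, then $\Om2d$, then $\Om3c$, before $\Om2a$ falls out. So when you carry out (ii) you should expect a short composite of this kind rather than a single kink-plus-$\Om3a$ manoeuvre; the reduction to Polyak's moves is still valid, but the sequence is longer than your sketch suggests.
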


The proof of this theorem is given at the end of this section, making use of the following Lemmas~\ref{2c2d} through~\ref{A3moves}. We note that the sets in collections $\mathcal{A}$ and $\mathcal{H}$ do not contain the moves $\Om2c$ and $\Om2d$, and we  begin by showing in the next statement that any set $X \in \mathcal{A} \cup \mathcal{H}$ generates either the move $\Om2c$ or the move $\Om2d$.
The first realization given for each of the moves was provided in~\cite{Pol}.

\begin{lemma}\label{2c2d}
The move $\Om2c$ may be realized by each of the following oriented Reidemeister moves:
\begin{itemize}
\item $\Om1a$, $\Om3a$ and either $\Om2a$ or $\Om2b$
\item $\Om1c$, $\Om3h$ and either $\Om2a$ or $\Om2b$
\end{itemize}
The move $\Om2d$ may be realized by each of the following oriented Reidemeister moves:
\begin{itemize}
\item $\Om1b$, $\Om3a$ and either $\Om2a$ or $\Om2b$
\item $\Om1d$, $\Om3h$ and either $\Om2a$ or $\Om2b$
\end{itemize}
\end{lemma}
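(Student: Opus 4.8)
The plan is to produce explicit realizations of $\Om2c$ (and symmetrically $\Om2d$) as finite sequences of the listed moves together with planar isotopies, working inside the localized disk of the target move. The guiding idea, borrowed from Polyak, is the ``finger move'' or ``cancellation'' trick: to create a well-oriented bigon, one first introduces a small kink with an $\Om1$ move, slides a nearby strand across the crossing of that kink using an $\Om3$ move, and then removes a now-extraneous kink (or disoriented bigon) with an $\Om1$ or $\Om2$ move. Concretely, for the first item I would start with two parallel strands oriented so that the desired counter-clockwise bigon should appear between them; apply $\Om1a$ to one strand to produce a positively-crossed loop with the correct rotational sense; then the triangle formed by the loop's crossing together with the (virtual, about-to-be-created) two crossings of the bigon is a well-oriented triangle with two positive crossings and one negative crossing, which is exactly the configuration of $\Om3a$, so an application of $\Om3a$ pushes the strand through; finally a disoriented bigon is left over, and an application of $\Om2a$ (or $\Om2b$, depending on the crossing signs, which is why the lemma allows either) removes it, leaving precisely the $\Om2c$ picture. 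The variant with $\Om1c$ and $\Om3h$ is the mirror-type argument: $\Om3h$ is the ``all-negative-adjacent'' well-oriented move (two negative crossings, one positive), so using the negative kink from $\Om1c$ produces the triangle matching $\Om3h$ instead of $\Om3a$.

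The second half of the lemma, for $\Om2d$, follows by the same constructions after reversing the orientation of the enclosing region (equivalently, reflecting the diagram): $\Om1a \leftrightarrow \Om1b$ and $\Om1c \leftrightarrow \Om1d$ under this symmetry, while $\Om3a$ and $\Om3h$ are each symmetric under it, and the disoriented bigon removed at the end is still of type $\Om2a$ or $\Om2b$. So once the $\Om2c$ realizations are drawn, the $\Om2d$ realizations are obtained formally.

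I expect the main obstacle to be purely bookkeeping rather than conceptual: one must track crossing signs and strand orientations carefully through the $\Om1$, $\Om3$, $\Om2$ sequence to confirm that (a) the intermediate triangle genuinely has the crossing data of $\Om3a$ (resp.\ $\Om3h$) and not one of the six disoriented type-3 moves, and (b) the leftover bigon after the $\Om3$ move is disoriented, so that $\Om2a$ or $\Om2b$ (and not $\Om2c$/$\Om2d$) applies — otherwise the argument would be circular. Getting the ``either $\Om2a$ or $\Om2b$'' dichotomy right is the subtle point: the two choices correspond to which of the two strands of the final bigon lies on top, and one should check that whichever disoriented bigon arises, it is one of these two and never a well-oriented one. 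Since all of this is local and the pictures are small (one loop plus one transverse strand), the verification is a finite check best done by drawing the four sequences of diagrams; I would present each realization as a labelled chain of figures with the move applied at each arrow, exactly in the style of Polyak's original realization which we are extending.
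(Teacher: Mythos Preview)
Your approach is essentially the paper's: the proof there consists of eight explicit chains of diagrams (four for $\Om2c$, four for $\Om2d$) implementing exactly the Polyak ``kink--slide--cleanup'' trick you describe, and the $\Om2d$ cases are indeed the reflections of the $\Om2c$ cases.

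One point in your verbal sketch needs correcting before the pictures will close up. Starting from the crossingless side of $\Om2c$ and applying $\Om1a$ leaves only one crossing, so $\Om3a$ cannot be applied next; the actual sequence (and what the paper's figures show) is
\[
\Om1a \ \longrightarrow\ \Om2a \text{ (or } \Om2b\text{)} \ \longrightarrow\ \Om3a \ \longrightarrow\ \Om1a^{-1},
\]
i.e.\ four steps, with the type~2 move inserted \emph{before} the type~3 move to supply the three crossings needed for the triangle. The $\Om1$ move both opens and closes the sequence. Relatedly, the phrase ``either $\Om2a$ or $\Om2b$'' in the statement does not mean a single realization in which you remove whichever disoriented bigon happens to appear; it means two genuinely different realizations, distinguished by whether the second strand is pushed over or under the outer arc of the kink in the $\Om2$ step. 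Both choices yield a well-oriented triangle of type $\Om3a$ (two positive, one negative crossing), which is why the $\Om3$ move is the same in both. These are precisely the bookkeeping issues you anticipated; once the order is fixed and the two $\Om2$ variants are drawn separately, your proof coincides with the paper's.
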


\begin{proof}
We realize the move $\Om2c$ in four ways, all shown below. 
\begin{center}
\includegraphics[height=0.85in]{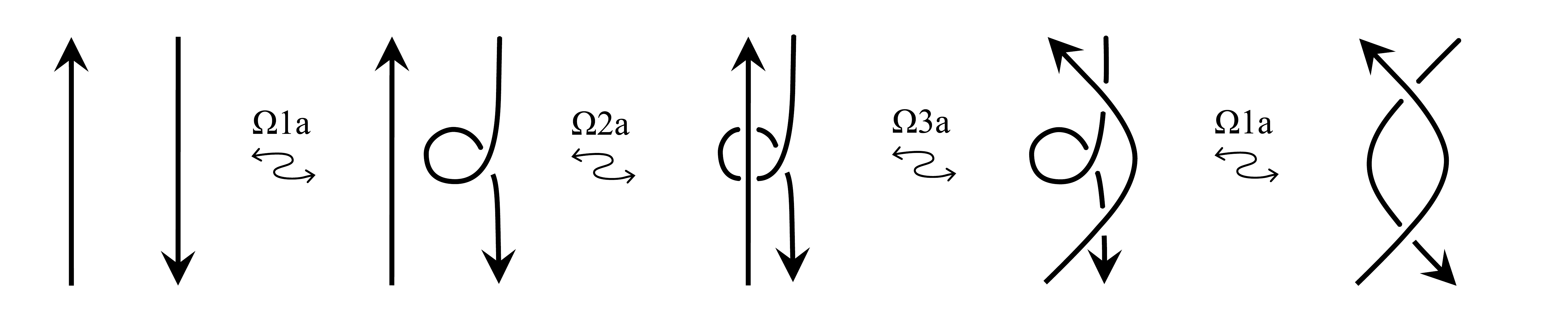}\\
\includegraphics[height=0.85in]{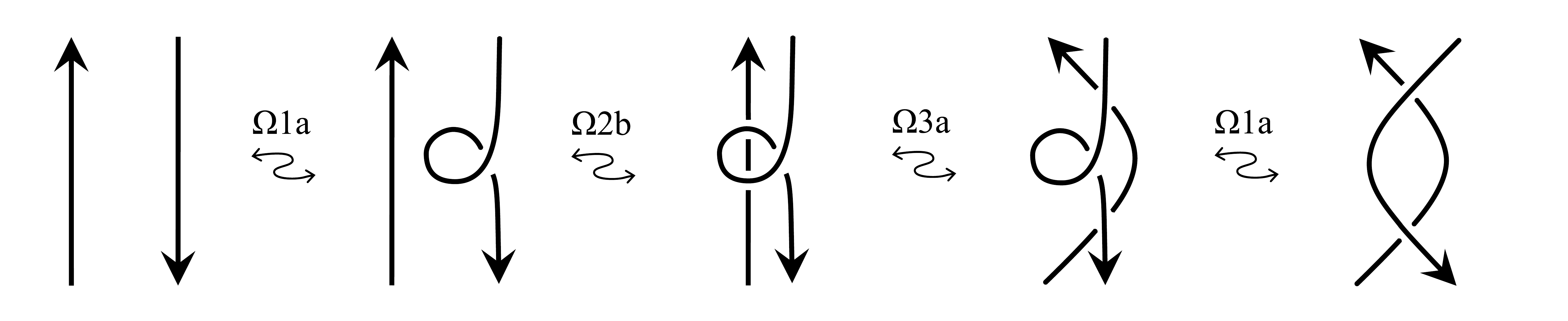}\\
\includegraphics[height=0.85in]{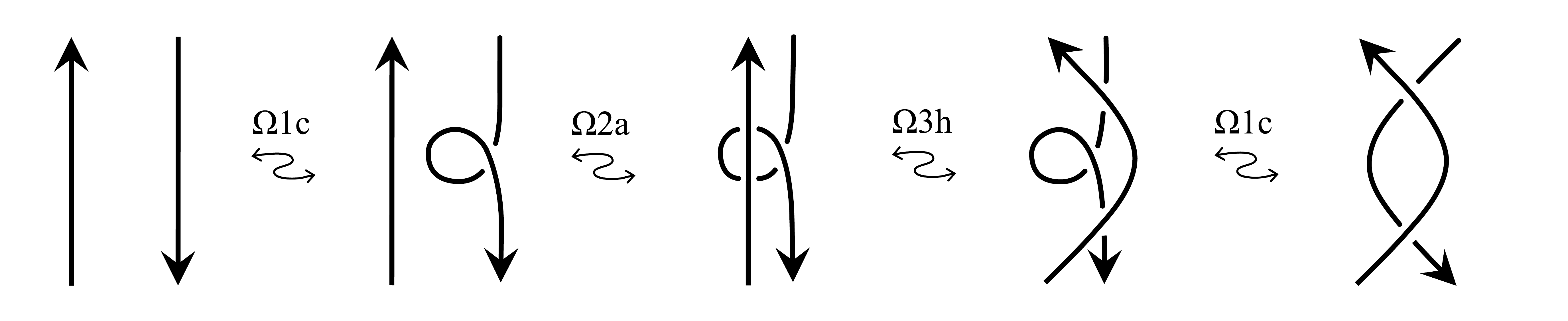}\\
\includegraphics[height=0.85in]{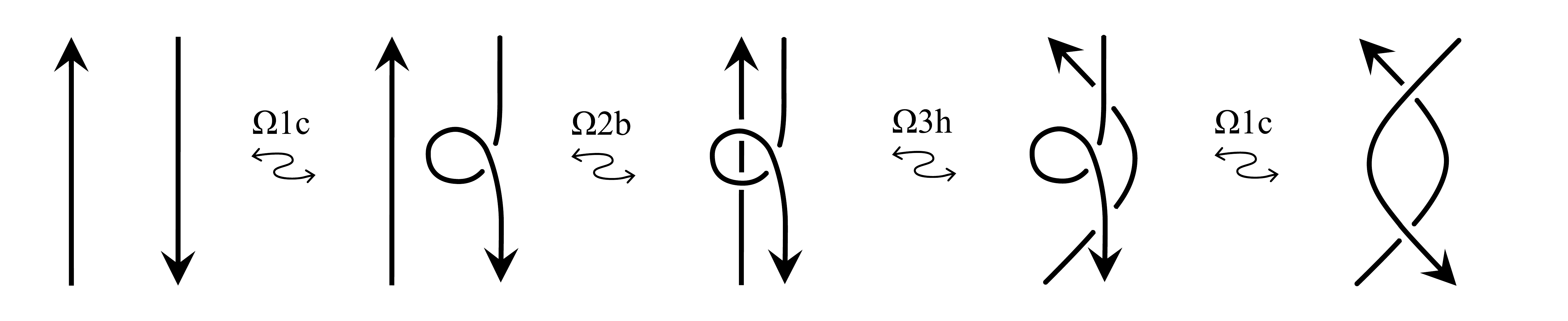}
\end{center}
The move $\Om2d$ is realized in a similar way, as shown below.
\begin{center}
\includegraphics[height=0.85in]{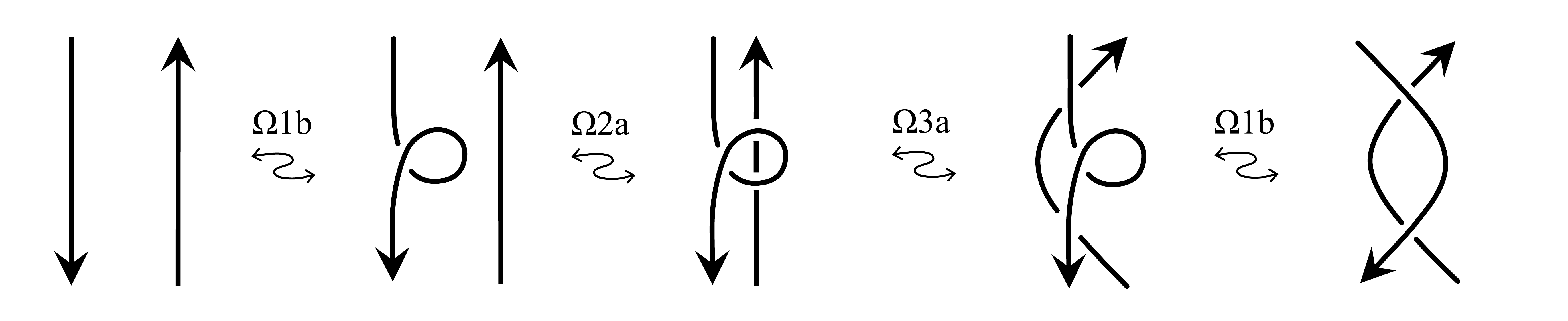}\\
\includegraphics[height=0.85in]{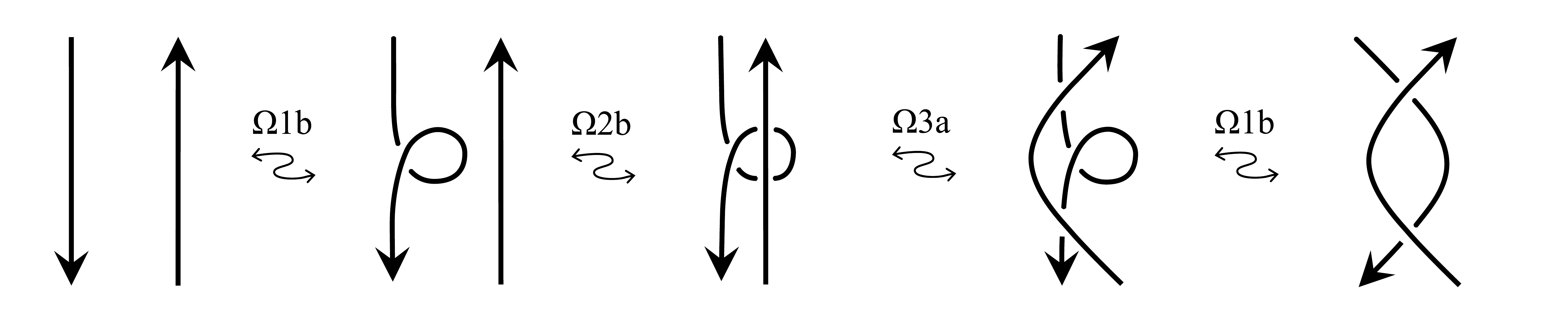}\\
\includegraphics[height=0.85in]{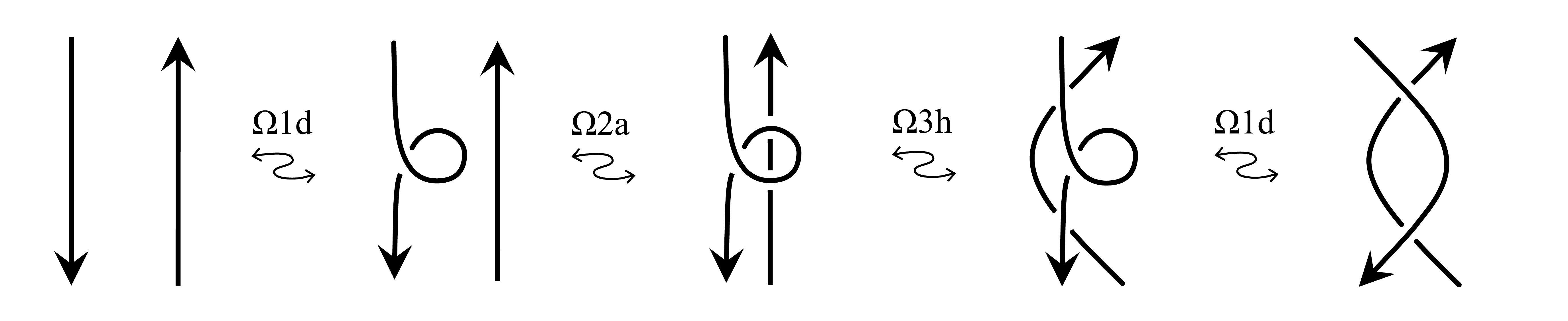}\\
\includegraphics[height=0.85in]{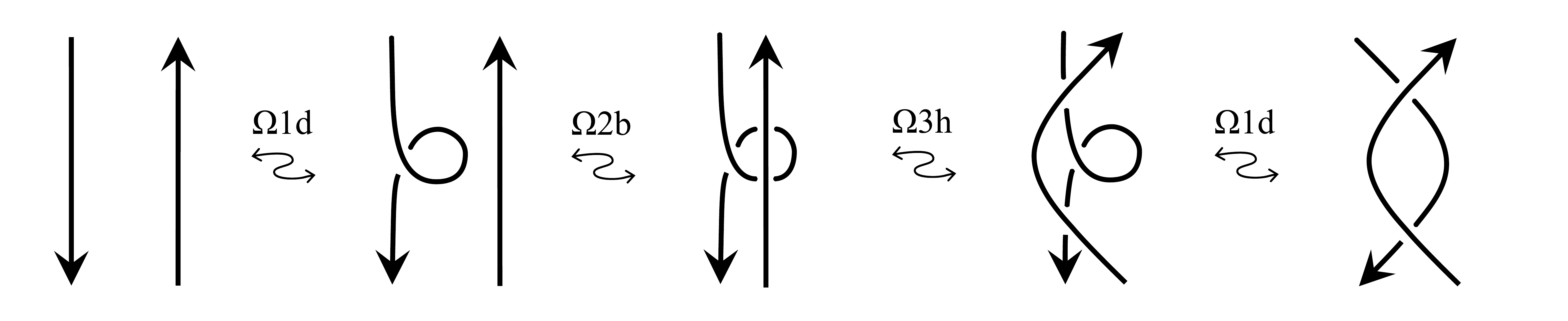}
\end{center}
Hence, the statement holds.
\end{proof}

\begin{remark}
With Lemma~\ref{2c2d}, each set from Theorem~\ref{P1Main} generates either move $\Om2c$ or move $\Om2d$. Hence, whenever the move $\Om2c$ or $\Om2d$ is used in a sequence of oriented Reidemeister moves, it may be replaced by a sequence of oriented Reidemeister moves from Lemma~\ref{2c2d}. 
\end{remark}

Each set of moves $X \in \mathcal{A} \cup \mathcal{H}$ contains exactly two of the $\Om1$ moves. In particular, any set $X \in \mathcal{A} \cup \mathcal{H}$ that generates the move $\Om2c$ with only sequences from Lemma~\ref{2c2d} contains move $\Om1a$ or $\Om1c$. Similarly, any set $X \in \mathcal{A} \cup \mathcal{H}$ that generates the move $\Om2d$ with only sequences from Lemma~\ref{2c2d} contains move $\Om1b$ or $\Om1d$. This is relevant in the following lemma, which shows that each set $X\in \mathcal{A} \cup \mathcal{H}$ generates the unincluded $\Om1$ moves. We remark that the realizations of moves $\Om1c$ and $\Om1d$ come directly from~\cite{Pol}.

\begin{lemma}\label{Type1}
Each oriented $\Om1$ move may be realized as follows:
\begin{enumerate}
\item $\Om1a$ may be realized by a sequence of $\Om1d$ and $\Om 2d$ moves. 
\item $\Om 1b$ may be realized by a sequence of $\Om1c$ and $\Om2c$ moves.
\item $\Om 1c$ may be realized by a sequence of $\Om1b$ and $\Om2d$ moves. 
\item $\Om 1d$ may be realized by a sequence of $\Om 1a$ and $\Om2c$ moves.
\end{enumerate}
\end{lemma}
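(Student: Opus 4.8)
The plan is to prove each of the four realizations by an explicit local picture, following the same strategy Polyak used for parts (3) and (4). In each case we start with a single strand and want to produce a kink of the appropriate type (crossing sign and loop orientation), and the idea is to first create a disoriented or well-oriented bigon with a single type-2 move, then isotope one of the two new crossings around the strand so that it becomes a kink, and finally cancel the remaining crossing. The bookkeeping is entirely in matching the crossing signs and the loop orientation: an $\Om1a$ kink has a positive crossing with a clockwise loop, an $\Om1b$ kink has a positive crossing with a counter-clockwise loop, and $\Om1c$, $\Om1d$ are the negative-crossing analogues.

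First I would handle parts (3) and (4), which are quoted from \cite{Pol}: for (4), apply $\Om2c$ to introduce a counter-clockwise well-oriented bigon on a strand, then slide the appropriate arc to convert one of the two crossings of the bigon into a clockwise loop, and recognize the result as an $\Om1d$ kink together with a leftover crossing that is removed by the reverse $\Om1a$ move; part (3) is the mirror/rotation of this using $\Om2d$ (a clockwise well-oriented bigon) and $\Om1b$. For parts (1) and (2) I would argue by symmetry: rotating the diagrams of (3) and (4) by $180^\circ$ (which swaps clockwise and counter-clockwise loops but preserves crossing signs) and reading the moves in the reverse direction turns the realization of $\Om1c$ via $\{\Om1b,\Om2d\}$ into a realization of $\Om1a$ via $\{\Om1d,\Om2d\}$, and similarly $\Om1d$ via $\{\Om1a,\Om2c\}$ becomes $\Om1b$ via $\{\Om1c,\Om2c\}$. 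Alternatively each of the four is a direct one-picture argument, and since the figures are small (one type-2 move, one isotopy, one type-1 cancellation) I would just draw all four.

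The main obstacle is purely combinatorial rather than conceptual: one must be careful that the type-2 move used really is available in the relevant generating set, i.e. that in every $X \in \mathcal{A}\cup\mathcal{H}$ the $\Om1$ move we are trying to realize can be reached from the two $\Om1$ moves that $X$ actually contains, using only $\Om2c$ or $\Om2d$ (which by Lemma~\ref{2c2d} is itself generated). Concretely, a set containing $\Om1a$ and $\Om1c$ needs to reach $\Om1b$ and $\Om1d$: part (2) gives $\Om1b$ from $\Om1c$ and $\Om2c$, and part (4) gives $\Om1d$ from $\Om1a$ and $\Om2c$; a set containing $\Om1b$ and $\Om1d$ uses parts (1) and (3) with $\Om2d$; the mixed sets $\{\Om1a,\Om1b\}$ and $\{\Om1c,\Om1d\}$ need $\Om1c,\Om1d$ respectively from $\Om1a$ (resp.\ $\Om1b$) via parts (4) and (3). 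So the four items of the lemma are exactly what is needed, and verifying this coverage is the only thing beyond drawing the pictures. I expect the proof itself to consist of four figures plus the one-line remark that $\Om2c$ and $\Om2d$ are replaced using Lemma~\ref{2c2d} when they are not in $X$.
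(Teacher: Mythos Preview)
Your direct-picture approach (apply the indicated $\Om2$ move to a single strand to create two nested kinks, then cancel the outer one with the indicated $\Om1$ move) is exactly what the paper does: its proof is one sentence plus four small figures, one per item. The extra paragraph about which pairs of $\Om1$ moves each $X\in\mathcal A\cup\mathcal H$ needs is correct and useful, but the paper places that discussion outside the proof of this lemma.

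One point to fix: your proposed symmetry shortcut is misidentified. A $180^\circ$ rotation in the plane is orientation-preserving, so it does \emph{not} swap clockwise and counter-clockwise loops, and combining it with reversing the sequence of moves does not take (3),(4) to (1),(2) either. The symmetry that actually sends (3)$\leftrightarrow$(1) and (4)$\leftrightarrow$(2) is the crossing change (reflection in the plane of the page, swapping over/under): it interchanges positive and negative crossings while preserving loop orientations, hence $\Om1a\leftrightarrow\Om1c$, $\Om1b\leftrightarrow\Om1d$, and fixes $\Om2c$, $\Om2d$. Since you also propose simply drawing all four pictures, this does not affect the validity of your plan, but if you want to invoke a symmetry, use the mirror-in-the-page, not the $180^\circ$ rotation.
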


\begin{proof}
For each realization, we first introduce two crossings on a single strand with a type 2 move and then we undo the outermost twist with a type 1 move, as shown below.
\begin{align*}
\includegraphics[height=0.8in]{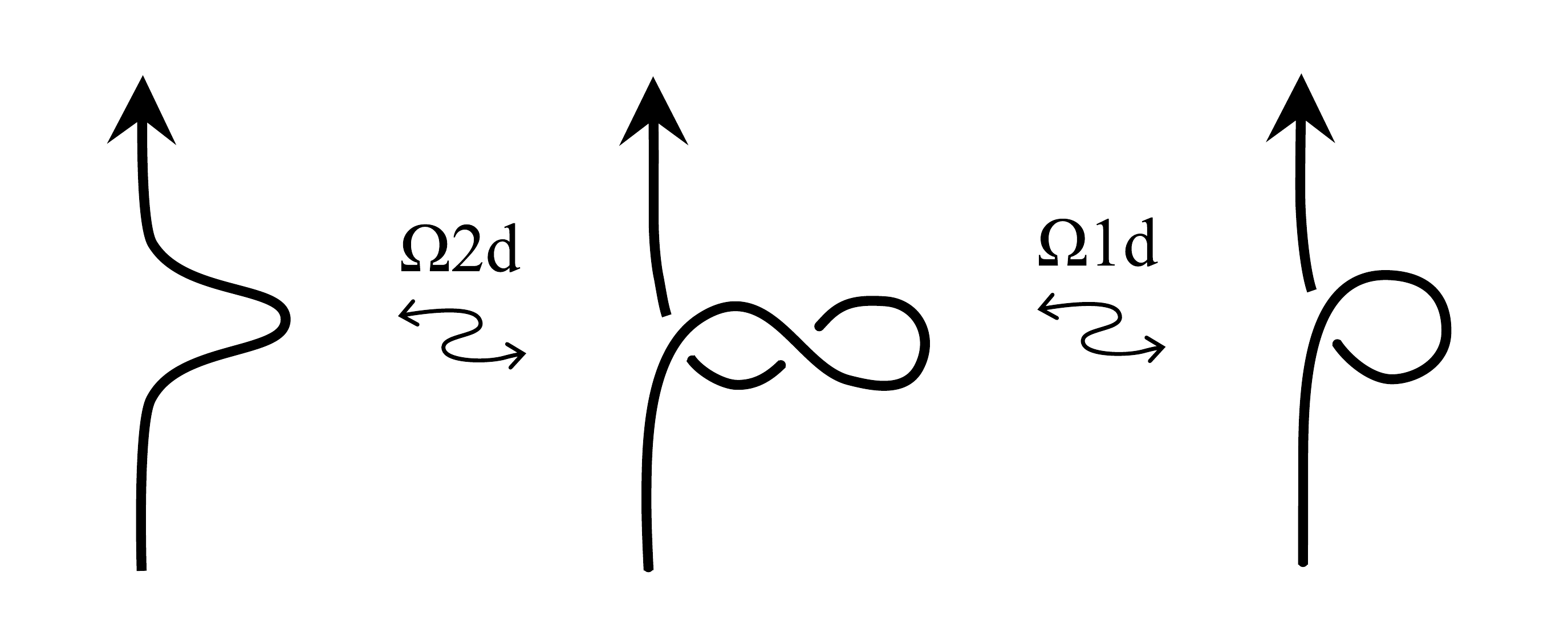} \quad& \quad\includegraphics[height=0.8in]{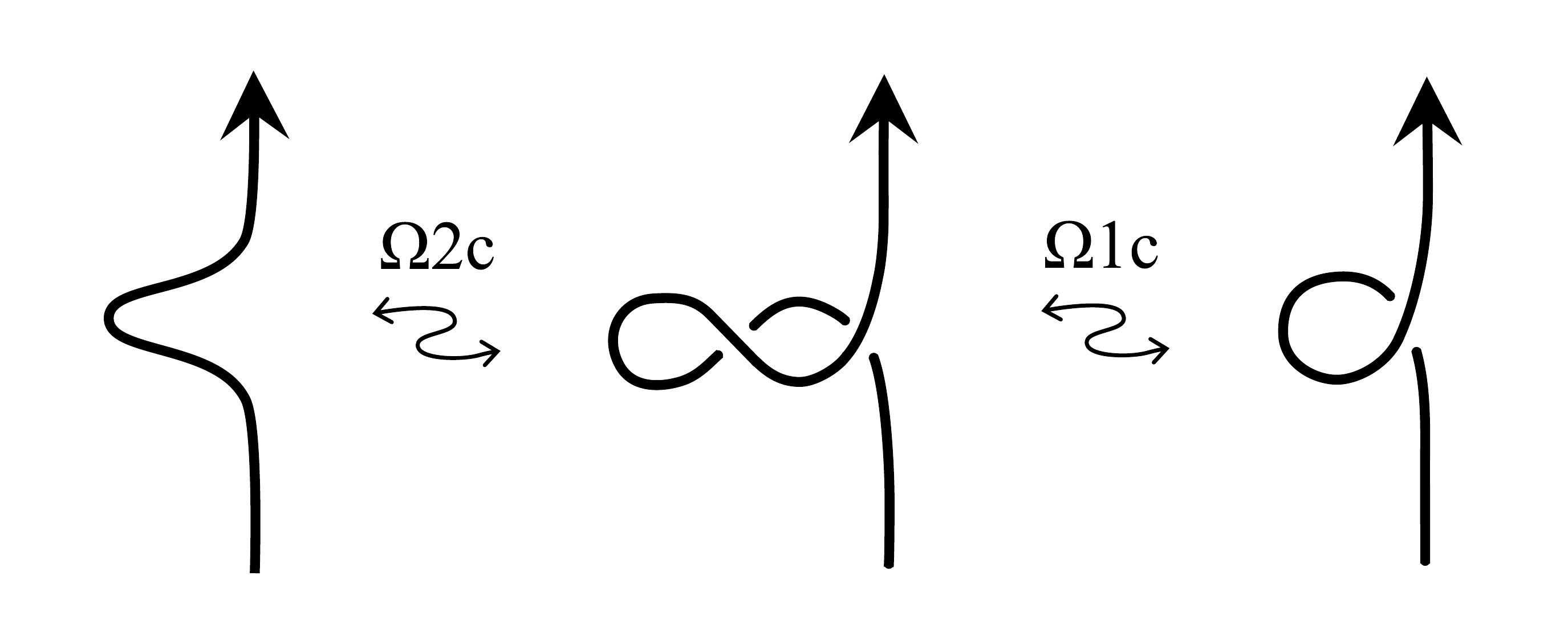}\\
\includegraphics[height=0.78in]{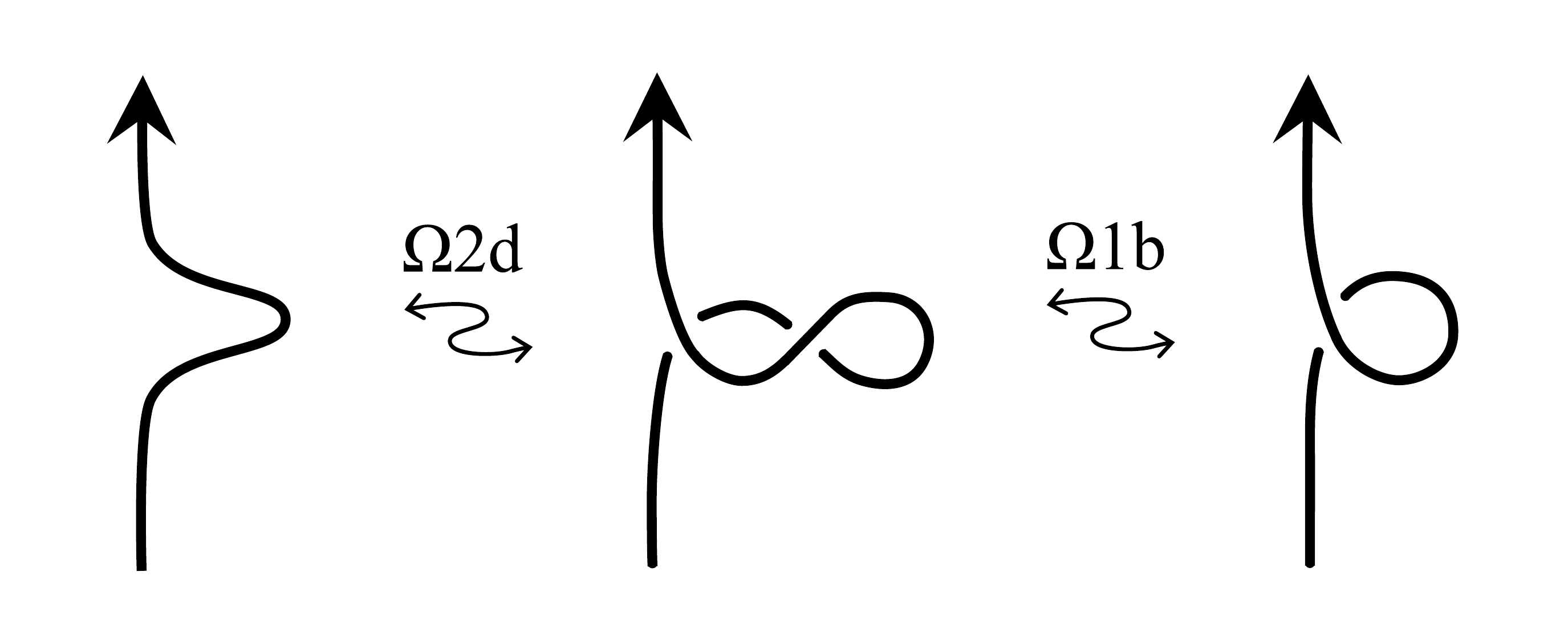}\quad & \quad\includegraphics[height=0.8in]{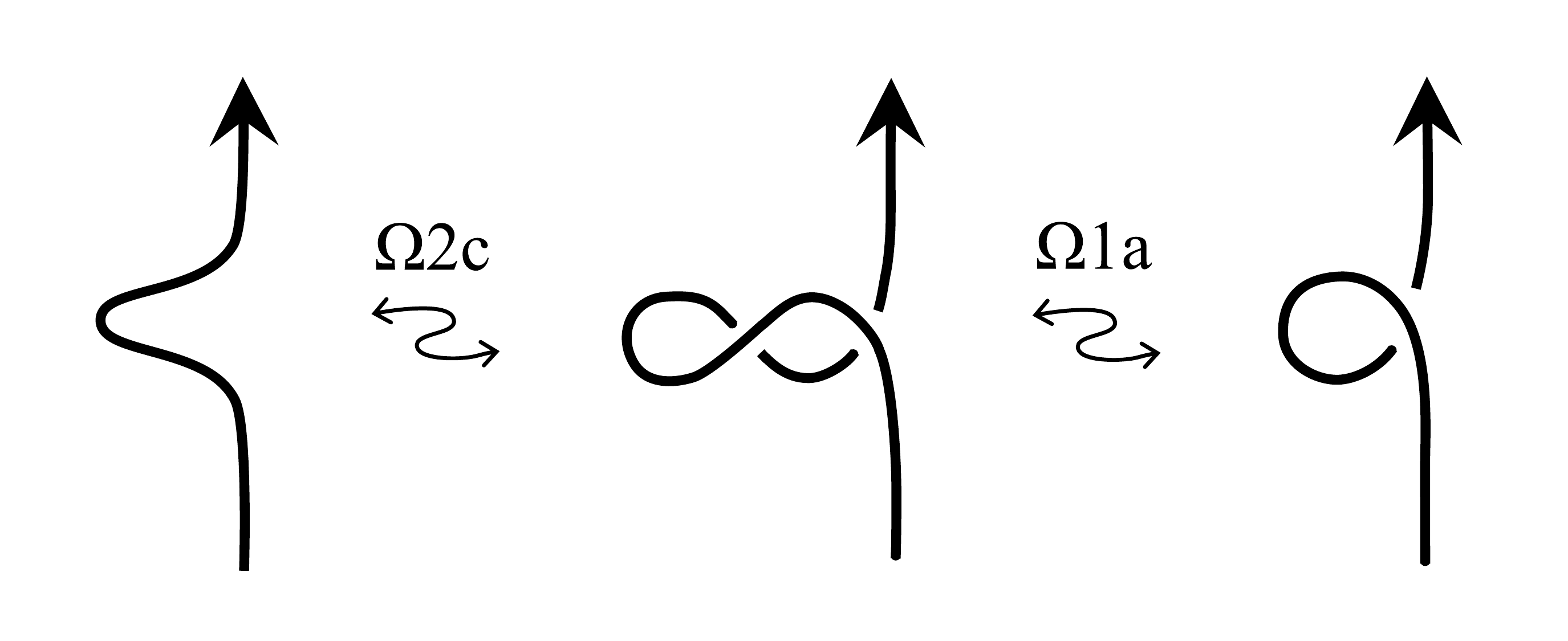}
\end{align*}
\end{proof}

Lemma~\ref{Type1} and Lemma~\ref{2c2d} taken together imply that every set $X \in \mathcal{A} \cup \mathcal{H}$ generates the unincluded oriented $\Om1$ moves and both moves $\Om2c$ and $\Om2d$.
To generate the remaining oriented $\Om2$ moves with each set $X$ in $\mathcal{A} \cup \mathcal{H}$, we must use an oriented $\Om3$ move. The next lemma, taken with the previous two, shows that if $X \in \mathcal{A}$ then $X$ generates the move $\Om3c$, and if $X \in \mathcal{H}$ then $X$ generates the move $\Om3d$. Note that the first realization is a result from Polyak in \cite{Pol}.

\begin{lemma} \label{3c3d}
The move $\Om3c$ may be realized by a sequence of moves $\Om3a, \Om2c,$ and $\Om2d$. The move $\Om3d$ may be realized by a sequence of moves $\Om3h, \Om2c,$ and $\Om2d$.
\end{lemma}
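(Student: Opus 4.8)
The plan is to prove both realizations by the same mechanism Polyak uses in~\cite{Pol} for the first one: to manufacture the well-oriented move $\Om3a$ (respectively $\Om3h$) inside the localized disk by bracketing it between type-2 moves that temporarily reshape the three strands. For $\Om3c$ I would begin with its left-hand diagram, whose triangle is disoriented with two negative and one positive crossing, and push a finger of one of the three strands across a suitable second strand by a $\Om2c$ (or $\Om2d$) move; this is allowed precisely because those moves act on well-oriented bigons. The point is to choose the strand and the side so that, in the resulting five-crossing diagram, three of the crossings -- now two positive and one negative, the extra positive crossing being supplied by the type-2 move -- bound a \emph{well-oriented} triangle in exactly the over/under arrangement of $\Om3a$: in $\Om3c$ the free strand slides over the positive crossing, and the detour is arranged to keep this so. Applying $\Om3a$ carries that strand across the triangle, and a second $\Om2c$ (or $\Om2d$) move then removes the two auxiliary crossings, leaving precisely the right-hand diagram of $\Om3c$. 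In the written proof this appears as a short strip of diagrams with each arrow labelled by the move used; the order of the two type-2 moves (plausibly one before and one after $\Om3a$) and which of them is $\Om2c$ versus $\Om2d$ is read off that strip.

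For $\Om3d$ I would not redo the computation but apply the crossing-change involution to the sequence just built. Switching every crossing of a diagram sends $\Om3c$ to $\Om3d$ and $\Om3a$ to $\Om3h$, and it fixes each of $\Om2c$ and $\Om2d$, since the boundary orientation of a well-oriented bigon, which is what distinguishes those two moves, is unaffected by a crossing switch. Hence the image under this involution of a realization of $\Om3c$ by $\Om3a, \Om2c, \Om2d$ is a realization of $\Om3d$ by $\Om3h, \Om2c, \Om2d$, which I would present simply as the mirrored strip of diagrams.

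The work here is combinatorial rather than conceptual, and that is where the care lies. One has to select, among the three strands of the disoriented triangle, the one whose finger is pushed and the second strand it is pushed across; decide correctly between $\Om2c$ and $\Om2d$ (they differ only in the chirality of the bigon); and then verify on the picture that the three-crossing sub-diagram produced really is $\Om3a$ -- the same cyclic orientation around the triangle and the same assignment of over- and under-strands, not one of the other type-3 moves -- while also confirming that both auxiliary bigons are well-oriented, so that only $\Om2c$ and $\Om2d$ (never $\Om2a$ or $\Om2b$) are used. Throughout, the fact that every move involved preserves the writhe gives a running consistency check, and the observation that the region of $\Om3c$ contains only one positive crossing whereas $\Om3a$ needs two explains why a type-2 move is unavoidable. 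Once a correct detour has been pinned down, each individual step is a direct inspection of the local diagrams.
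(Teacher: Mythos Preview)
Your treatment of $\Om3c$ is correct and matches the paper's: bracket $\Om3a$ between a $\Om2c$ and a $\Om2d$ move applied to the same pair of strands.

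For $\Om3d$, however, your symmetry shortcut contains an error. The crossing-change involution (switching over/under at every crossing) does \emph{not} send $\Om3c$ to $\Om3d$; it sends $\Om3c$ to $\Om3e$. In $\Om3c$ the unique positive crossing is the one between the middle and bottom strands, and the top strand slides over it. Switching all crossings reverses the height order and flips every sign, so that distinguished crossing --- now negative --- sits between the new top and middle strands, and the new \emph{bottom} strand slides \emph{under} it. By the paper's descriptions that is exactly $\Om3e$ (``a strand slides under the negative crossing''), not $\Om3d$ (``a strand slides over the negative crossing''); one checks in the same way that crossing switch pairs $\Om3d\leftrightarrow\Om3f$. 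The involution you actually want is planar reflection (mirror in a line in the page): it flips all crossing signs while preserving the height order, hence carries $\Om3c$ to $\Om3d$ and $\Om3a$ to $\Om3h$ --- but it \emph{swaps} $\Om2c$ and $\Om2d$ rather than fixing them, contrary to your claim. The lemma still follows, since only the unordered set $\{\Om2c,\Om2d\}$ is required, so your idea is salvageable once the correct symmetry is named and its effect on the type~2 moves is stated accurately. The paper itself does not invoke any symmetry: it simply redraws the three-step strip for $\Om3d$, replacing $\Om3a$ by $\Om3h$.
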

\begin{proof}
To realize the move $\Om3c$, the move $\Om2c$ is used first to pass one strand over another strand, as in the diagram below. Then we apply the move $\Om3a$ to pass that same strand over the crossing in the middle of the diagram. Finally, the realization of the move $\Om3c$ is finalized through the application of the move $\Om2d$. We remark that the moves $\Om2c$ and $\Om2d$ are applied to the same pair of strands.
 \[ \includegraphics[height=0.75in]{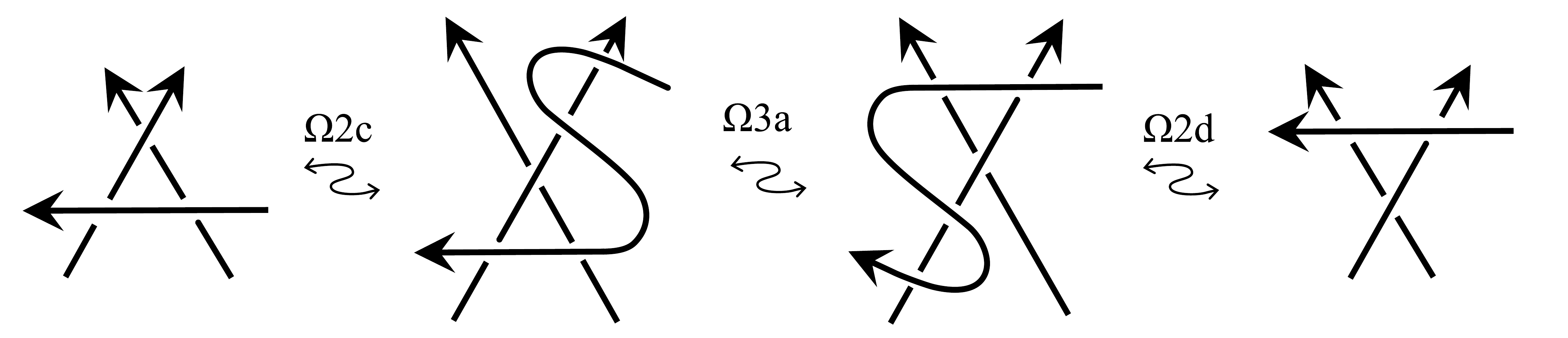}\]
 The move $\Om3d$ is realized in a similar manner, with the difference that the move  $\Om3a$ in the sequence is replaced with the move  $\Om3h$, as shown below.
  \[ \includegraphics[height=0.75in]{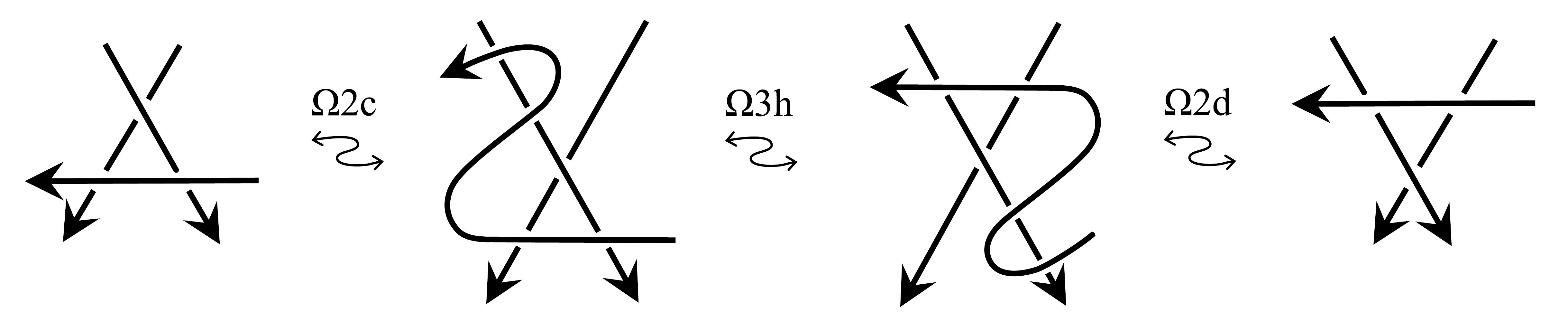}\]
Hence, the two realizations are complete and the moves $\Om3c$ and $\Om3d$ may be replaced by the respective sequences given here.
\end{proof}

Lemma~\ref{3c3d} combined with Lemmas~\ref{2c2d} and~\ref{Type1} imply that each set $X \in \mathcal{A}$ generates the move $\Om3c$ and that each set $X \in \mathcal{H}$ generates the move $\Om3d$. We use this result to prove in the following lemma that every set $X \in \mathcal{A}\cup \mathcal{H}$ generates the move $\Om2a$ or $\Om2b$, whichever is not included in the corresponding set $X$. 

\begin{lemma}\label{2a2b}
The move $\Om2a$ may be realized by each of the following sequences of moves:
\begin{multicols}{2}
\begin{itemize}
\item $\Om 1a, \Om2d, \Om3d$
\item $\Om 1c, \Om2d, \Om3c$
\end{itemize}
\end{multicols}
The move $\Om2b$ may be realized by each of the following sequences of moves:
\begin{multicols}{2}
\begin{itemize}
\item $\Om 1b, \Om2c, \Om3d$
\item $\Om 1d, \Om2c, \Om3c$
\end{itemize}
\end{multicols}
\end{lemma}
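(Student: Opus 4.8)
The plan is to give one explicit realization and obtain the other three for free by symmetry. Observe that reversing the orientations of all strands interchanges $\Om1a \leftrightarrow \Om1b$, $\Om1c \leftrightarrow \Om1d$, $\Om2a \leftrightarrow \Om2b$ and $\Om2c \leftrightarrow \Om2d$ while fixing each of $\Om3c$ and $\Om3d$, and that passing to the mirror image interchanges $\Om1a \leftrightarrow \Om1d$, $\Om1b \leftrightarrow \Om1c$, $\Om2a \leftrightarrow \Om2b$, $\Om2c \leftrightarrow \Om2d$ and $\Om3c \leftrightarrow \Om3d$. Applying either global symmetry to every stage of a realizing sequence produces a realizing sequence for the transformed move, so orientation reversal carries a realization of $\Om2a$ by $\{\Om1a, \Om2d, \Om3d\}$ onto one of $\Om2b$ by $\{\Om1b, \Om2c, \Om3d\}$, the mirror carries it onto one of $\Om2b$ by $\{\Om1d, \Om2c, \Om3c\}$, and their composite gives one of $\Om2a$ by $\{\Om1c, \Om2d, \Om3c\}$. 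Hence it suffices to establish the first item.

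For that item I would argue diagrammatically, in the spirit of the proofs of Lemmas~\ref{2c2d}, \ref{Type1} and~\ref{3c3d}. Begin with the crossingless side of an $\Om2a$ move: two arcs meeting the boundary of the localized disk. First apply $\Om1a$ to one of the arcs, introducing a small curl (carrying a positive crossing) that bulges toward the other arc. Next perform a finger move, pushing a short finger of this curl across the other arc; with the curl placed on the correct side and the correct over/under data chosen for the finger, the coherently oriented bigon that is swept across is exactly the clockwise one, so this step is an $\Om2d$ move, and the disk now carries three crossings arranged around a disoriented triangle. Then apply $\Om3d$ to push the appropriate strand through that triangle: by the classification of the oriented $\Om3$ moves recalled in Section~\ref{sec:ReidMoves}, the sign pattern just produced — two positive crossings and one negative crossing, with the moving strand passing over the negative crossing — is precisely the configuration of $\Om3d$. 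After that move one of the three crossings has become an isolated curl, which a second $\Om1a$ move removes, leaving exactly the (positive-then-negative) disoriented bigon of $\Om2a$. Since every Reidemeister move is invertible, reading this sequence in either direction realizes $\Om2a$ using only moves from $\{\Om1a, \Om2d, \Om3d\}$; a parity count on the number of crossings confirms, incidentally, that any such realization must use $\Om1a$ an even, hence positive even, number of times.

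I expect the only real difficulty to be the bookkeeping of crossing signs and local orientations: the initial curl, together with the over/under information chosen at each step, must be arranged so that the intermediate move of type 2 is genuinely $\Om2d$ — not $\Om2c$, nor a disoriented bigon move — and so that the intermediate move of type 3 is genuinely $\Om3d$ rather than one of the seven other oriented $\Om3$ variants. Once one consistent choice is pinned down, each step is a single local move and the verification reduces to drawing the corresponding short sequence of diagrams; the remaining three items then follow immediately from the symmetries described above.
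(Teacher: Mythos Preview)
Your argument is correct and takes a genuinely different route from the paper. The paper simply exhibits four explicit pictorial sequences, one for each bullet, with no attempt to relate them; you instead produce a single realization and transport it to the other three via the global symmetries ``reverse all orientations'' and ``mirror''. This is more economical and makes transparent \emph{why} the four sequences come in matched pairs (e.g.\ why $\Om3d$ appears with $\Om1a,\Om1b$ while $\Om3c$ appears with $\Om1c,\Om1d$), something the paper's brute-force listing leaves implicit. The trade-off is that your approach requires checking the symmetry table carefully, whereas the paper's four pictures are each individually verifiable at a glance.

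One point worth tightening: the word ``mirror'' is ambiguous. The usual knot-theoretic mirror (swapping over/under at every crossing, keeping the planar picture fixed) sends $\Om3d$ to $\Om3f$, not to $\Om3c$, and sends $\Om1a$ to $\Om1c$, not to $\Om1d$. What you actually need is a \emph{planar} reflection through a line in the projection plane (equivalently, a spatial reflection through a vertical plane), which preserves over/under but reverses planar orientation; that operation does give exactly the exchanges $\Om1a\leftrightarrow\Om1d$, $\Om1b\leftrightarrow\Om1c$, $\Om2a\leftrightarrow\Om2b$, $\Om2c\leftrightarrow\Om2d$, $\Om3c\leftrightarrow\Om3d$ that you list. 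With that clarification your symmetry reduction goes through, and your verbal description of the single explicit sequence ($\Om1a$, then $\Om2d$, then $\Om3d$, then $\Om1a^{-1}$) matches the pattern of Lemma~\ref{2c2d} and is readily confirmed by drawing the four-frame strip.
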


\begin{proof} We first show how to realize the move $\Om2a$.
 \begin{center}
  \includegraphics[height=0.8in]{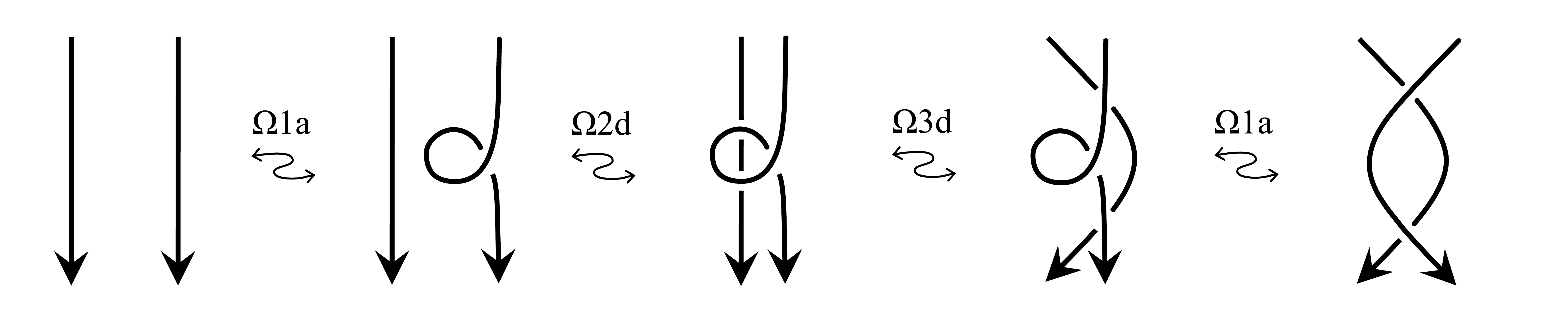}\\
 \includegraphics[height=0.8in]{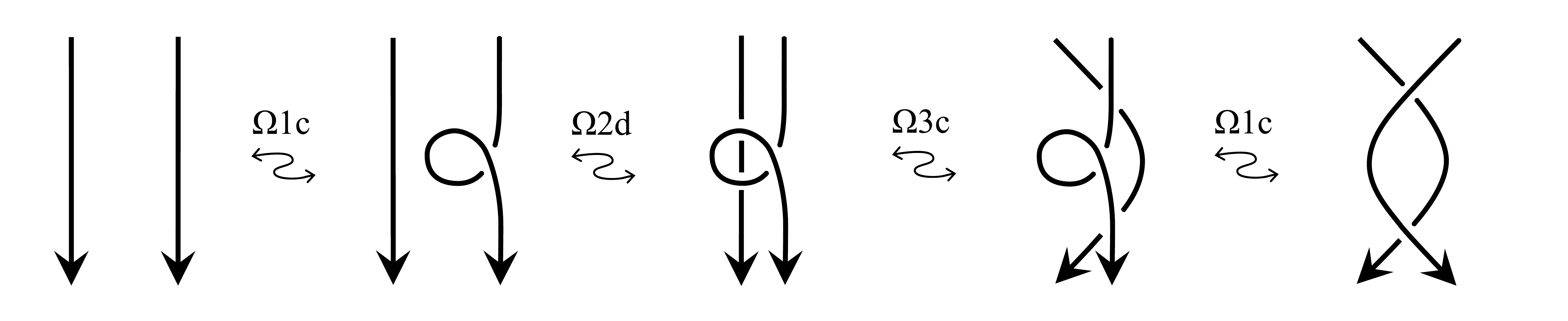}
 \end{center}
Next we realize the move $\Om2b$ in the two ways shown below; the second sequence was given by Polyak in~\cite{Pol}.
The process for these realizations is nearly identical to the process for the realizations of the move $\Om2a$.
 \begin{center}
  \includegraphics[height=0.8in]{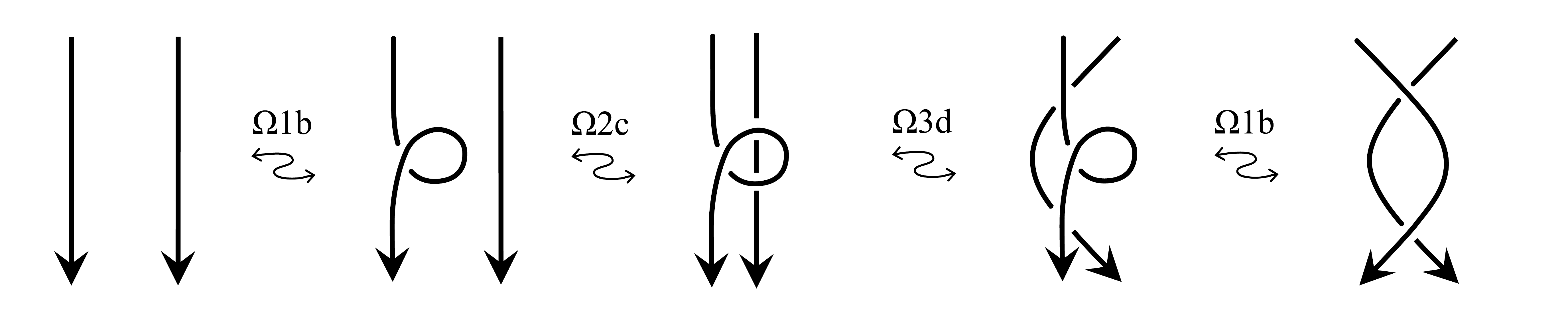}\\
  \includegraphics[height=0.8in]{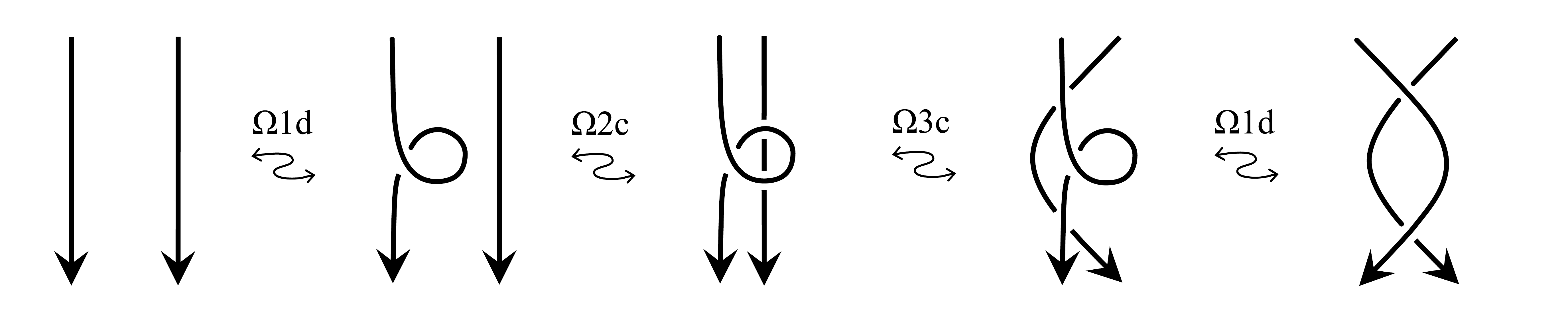}
  \end{center}
Thus we have two realizations for each of the moves $\Om2a$ and $\Om2b$.
\end{proof}

\begin{remark} \label{remark4}
Lemma~\ref{2a2b}, taken together with the previous lemmas, shows that every set $X \in \mathcal{A} \cup \mathcal{H}$ generates all oriented $\Om2$ moves. 
\end{remark}

The next step is to prove that every set $X \in \mathcal{A} \cup \mathcal{H}$ generates all oriented Reidemeister moves of type 3. We prove this in the following two lemmas.

\begin{lemma}\label{H3moves}
 Every set $X \in \mathcal{H}$ generates all oriented $\Om3$ moves. 
\end{lemma}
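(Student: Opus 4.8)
The plan is to show that the set $\{\Om1a, \Om1c, \Om1b, \Om1d, \Om2a, \Om2b, \Om2c, \Om2d, \Om3h\}$ — which by Lemmas~\ref{2c2d} through~\ref{2a2b} is generated by every $X \in \mathcal{H}$ — suffices to produce each of the remaining type 3 moves $\Om3a, \Om3b, \Om3c, \Om3d, \Om3e, \Om3f, \Om3g$. By Remark~\ref{remark4} and the remarks following Lemmas~\ref{Type1} and~\ref{3c3d}, every set in $\mathcal{H}$ already generates all four $\Om1$ moves, all four $\Om2$ moves, and the move $\Om3d$; so it is enough to derive $\Om3a, \Om3b, \Om3c, \Om3e, \Om3f, \Om3g$ from $\Om3h$ together with the (now freely available) type 1 and type 2 moves.

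The main tool, following Polyak, is the standard ``braiding/detour'' trick for passing between type 3 moves: given one oriented $\Om3$ move, one can produce another by pre- and post-composing with type 2 moves that flip the over/under role of a strand at one of the three crossings (exactly as in the proof of Lemma~\ref{3c3d}, where $\Om3c$ and $\Om3d$ were obtained from $\Om3a$ and $\Om3h$ by sandwiching between an $\Om2c$ and an $\Om2d$). Concretely, I would proceed crossing-sign by crossing-sign. First, using $\Om3h$ (two negative crossings, one positive, well-oriented triangle) together with the type 2 moves, obtain the ``all-negative'' move $\Om3g$ and then the two disoriented cousins $\Om3c, \Om3f$ of $\Om3h$ — these require only changing which strand slides over/under at the positive crossing, which is exactly an $\Om2c \leftrightarrow \Om2d$ substitution on one pair of strands, combined with the observation that the orientation of the triangle changes accordingly. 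Next, to reach the moves with two positive crossings ($\Om3a, \Om3d, \Om3e$), I would use the ``reverse a strand'' idea: applying type 1 moves to reverse the local orientation of one strand in a type 3 picture converts crossing signs in a controlled way, so that $\Om3h$ can be converted into an instance of $\Om3a$ (and then $\Om3d, \Om3e$ as its disoriented versions, again by the $\Om2c/\Om2d$ trick). Finally $\Om3b$, the all-positive move, is obtained from $\Om3a$ by the same braiding trick used to get $\Om3g$ from $\Om3h$. Since $\Om3d$ is already in hand from Lemma~\ref{3c3d}, and $\Om3c$ from the same lemma combined with Lemma~\ref{3c3d} applied in $\mathcal{A}$ — here within $\mathcal{H}$ I get $\Om3c$ directly as a disoriented version of $\Om3h$ — all eight type 3 moves are accounted for.

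The step I expect to be the real obstacle is the bookkeeping for the moves with two positive crossings, $\Om3a, \Om3d, \Om3e$: $\Om3h$ has the ``wrong'' crossing signs relative to these, so one cannot reach them by type 2 detours alone (type 2 moves preserve the total crossing count up to $\pm$ pairs and do not change the intrinsic crossing signs of the three double points of the triangle). Getting from two negative crossings to two positive crossings forces a genuine use of the $\Om1$ moves to reverse a strand's orientation, and verifying that the resulting local picture — after the reversal, the $\Om3h$ application, and un-reversal — is precisely an instance of $\Om3a$ (with the correct triangle orientation and over/under data) requires carefully tracking orientations through a moderately long sequence of planar isotopies and type 1 cancellations. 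I would present this as an explicit figure (a chain of diagrams, in the style of the figures in Lemmas~\ref{2c2d}, \ref{3c3d}, and~\ref{2a2b}) rather than in prose, and then note that $\Om3d$ and $\Om3e$ follow from $\Om3a$ by the now-routine $\Om2c/\Om2d$ sandwiching, and $\Om3b$ from $\Om3a$ by braiding, completing the list.
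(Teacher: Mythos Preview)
Your proposal contains a genuine conceptual error. You assert that ``$\Om3h$ has the `wrong' crossing signs relative to [$\Om3a, \Om3d, \Om3e$], so one cannot reach them by type 2 detours alone,'' and you therefore plan to use $\Om1$ moves to reverse a strand's orientation. This assertion is false, and in fact your own setup already contradicts it: you note (correctly, via Lemma~\ref{3c3d}) that $\Om3d$ is obtained from $\Om3h$ by an $\Om2c/\Om2d$ sandwich --- but $\Om3d$ has \emph{two positive crossings and one negative crossing}. So the type 2 detour trick \emph{does} change the crossing-sign profile of the underlying triangle. The reason is that in the sandwich you are not applying $\Om3h$ to the original triangle at all; the $\Om2$ move creates an \emph{adjacent} triangle with different crossings, and it is to that triangle that $\Om3h$ is applied.

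The paper's proof exploits exactly this, and never uses an $\Om1$ move to realize a type 3 move. The chain is: from $\Om3h$ obtain $\Om3d$ and $\Om3g$ (each by an $\Om2c/\Om2d$ sandwich); from $\Om3d$ obtain $\Om3f$ (by an $\Om2a/\Om2b$ sandwich); from $\Om3f$ obtain $\Om3a$ (by an $\Om2c/\Om2d$ sandwich); from $\Om3a$ obtain $\Om3c$ and $\Om3b$ (each by an $\Om2c/\Om2d$ sandwich); and from $\Om3b$ obtain $\Om3e$ (by an $\Om2a/\Om2b$ sandwich). Every step is a single type 2 move, one type 3 move, and another type 2 move --- precisely the pattern of Lemma~\ref{3c3d}. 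The ``real obstacle'' you flag, namely carefully threading $\Om1$ kinks through the triangle and cancelling them afterward, simply does not arise; your proposed detour through strand reversal is unnecessary and, as you yourself anticipate, would be the hardest part to make rigorous.
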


\begin{proof}
Let $X$ be any set element in the collection $\mathcal{H}$. Then, we know that the move $\Om3h$ is in $X$. The set $X$ also contains either the move $\Om2a$ or $\Om2b$, and from the previous lemmas, we know that $X$ generates all of the other Reidemeister moves of type 2. From Lemma~\ref{3c3d}, the move $\Om3d$ may be realized by a sequence of moves $\Om3h, \Om2c,$ and $\Om2d$.  Hence, $X$ generates the move $\Om3d$.

The move $\Om3g$ may be realized by a sequence of the moves $\Om3h, \Om2c$ and $\Om2d$, as shown below, and thus $X$ generates the move $\Om3g$.
 \[ \includegraphics[scale=0.21]{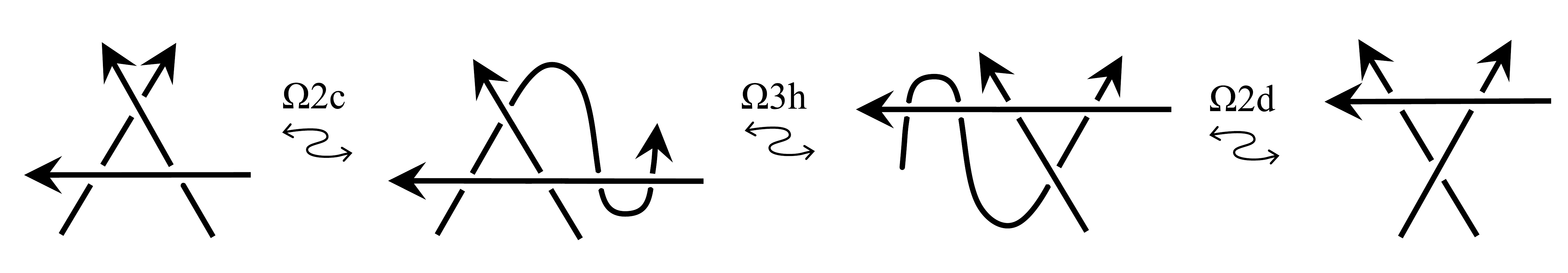}\]

Next, we show that the move $\Om3f$ may be realized by a sequence of $\Om3d, \Om2a$ and $\Om2b$ moves.
 \[ \includegraphics[scale=0.2]{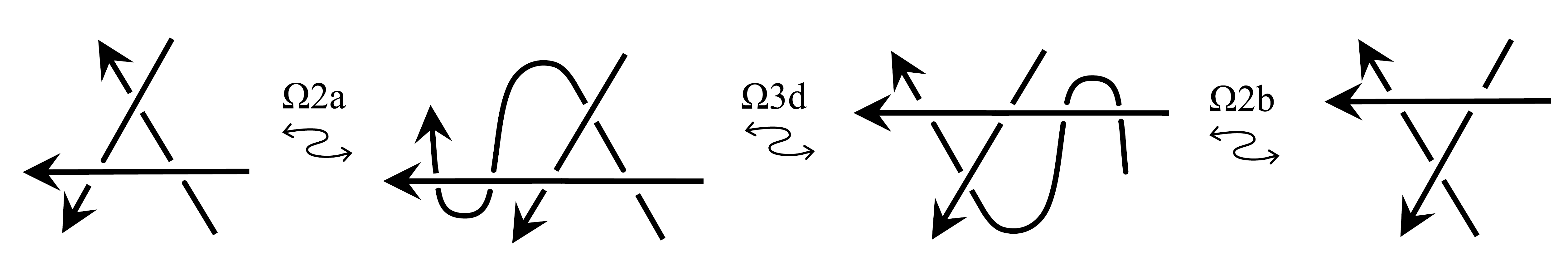}\]
Therefore, the set $X$ generates the move $\Om3f$. Now that we have been able to generate  $\Om3f$, we show that $X$ generates also the move $\Om3a$. Indeed, the move $\Om3a$ may be realized by a sequence of the moves $\Om3f, \Om2c,$ and $\Om2d$, as shown below.  \[ \includegraphics[height=0.85in]{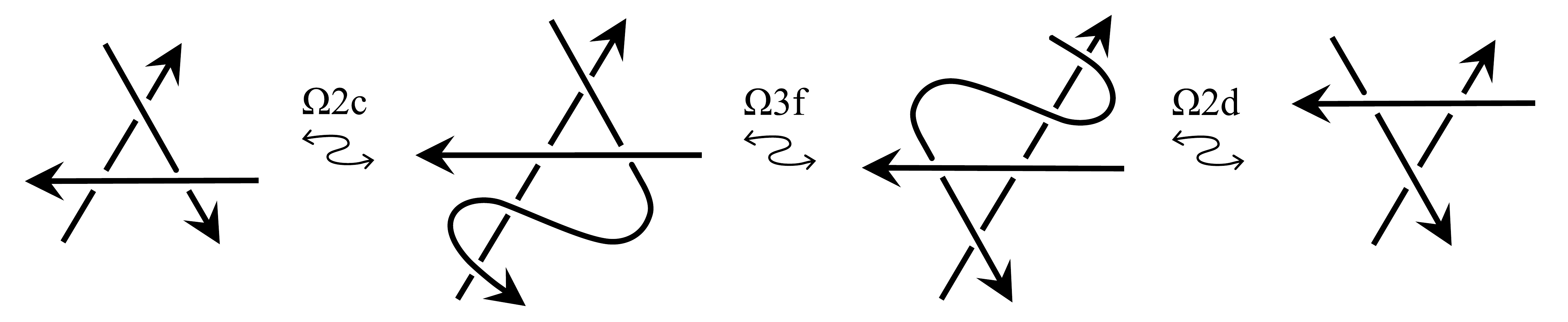}\]

From Lemma~\ref{3c3d}, we know that the move $\Om3c$ may be realized by a sequence of moves $\Om3a, \Om2c,$ and $\Om2d$. Hence, the set $X$ generates the Reidemeister move $\Om3c$. As we prove next, the set $X$ also generates the move $\Om3b$, which can be realized through a sequence of the moves $\Om3a, \Om2c$ and $\Om2d$.
 \[ \includegraphics[height=0.8in]{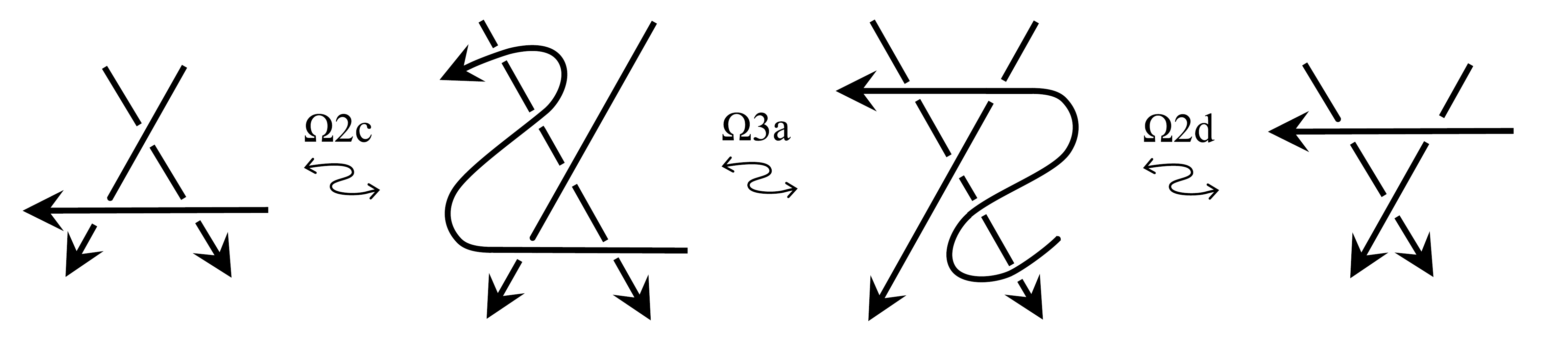}\]
 Now that we have realized the move $\Om3b$, we show that the set $X$ generates the move $\Om3e$, which can be realized by a sequence of the moves $\Om3b,\Om2a$ and $\Om2b$.
 \[ \includegraphics[height=0.8in]{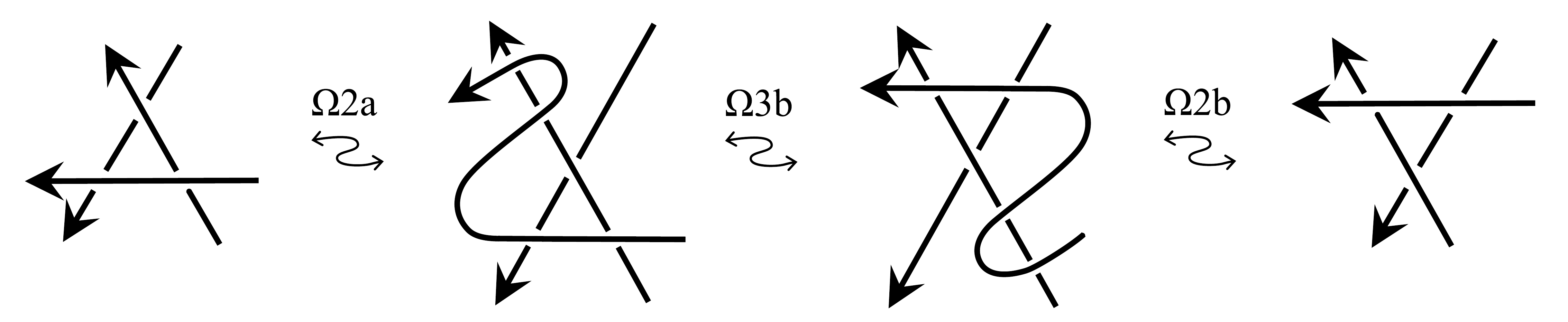}\]
 This completes the proof that $X$ generates all oriented Reidemeister moves of type 3. 
\end{proof}

\begin{lemma}\label{A3moves}
 Every set $X \in \mathcal{A}$ generates all oriented Reidemeister moves of type 3. 
\end{lemma}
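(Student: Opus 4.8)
The plan is to deduce this lemma from the already-proved Lemma~\ref{H3moves} by exploiting the crossing-switch (mirror) symmetry that relates the collections $\mathcal{A}$ and $\mathcal{H}$, rather than by repeating a chain of explicit realizations. For a diagram $D$, let $\overline{D}$ be the diagram obtained by reversing the over/under information at every crossing, leaving the underlying shadow and all strand orientations unchanged; for an oriented Reidemeister move $\Om$, let $\overline{\Om}$ be the move taking $\overline{D}$ to $\overline{D'}$ whenever $\Om$ takes $D$ to $D'$. This bar operation is an involution, and since it is applied locally it commutes with the localized-disk bookkeeping: crossing-switching an entire realizing sequence turns a realization of $\Om$ by moves from a set $S$ into a realization of $\overline{\Om}$ by moves from $\overline{S}$. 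Consequently, for every set $S$ of oriented Reidemeister moves and every oriented Reidemeister move $\Om$, the set $S$ generates $\Om$ if and only if $\overline{S}$ generates $\overline{\Om}$.

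Next I would record how the bar operation acts on the sixteen oriented Reidemeister moves. It permutes them: on type 1 it swaps $\Om1a \leftrightarrow \Om1c$ and $\Om1b \leftrightarrow \Om1d$ (a positive loop becomes a negative loop with the same rotational sense); it permutes the four type 2 moves, swapping $\Om2a \leftrightarrow \Om2b$; and it restricts to a permutation of the eight type 3 moves, namely $\Om3a \leftrightarrow \Om3h$, $\Om3b \leftrightarrow \Om3g$, $\Om3c \leftrightarrow \Om3e$, and $\Om3d \leftrightarrow \Om3f$ (switching all three crossings carries a well-oriented triangle to a well-oriented triangle and a disoriented one to a disoriented one, and interchanges ``the strand slides over'' with ``the strand slides under'' at the distinguished crossing). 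Comparing this with the definitions of $\mathcal{A}$ and $\mathcal{H}$, one checks directly that $X \mapsto \overline{X}$ is a bijection from $\mathcal{A}$ onto $\mathcal{H}$.

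Granting this, the proof closes in a single step. Let $X \in \mathcal{A}$; then $\overline{X} \in \mathcal{H}$, so by Lemma~\ref{H3moves} the set $\overline{X}$ generates all eight oriented type 3 moves. If $\Om$ is any oriented type 3 move, then $\overline{\Om}$ is again an oriented type 3 move and is therefore generated by $\overline{X}$; by the equivalence established above (applied to $S = \overline{X}$ and to the move $\overline{\Om}$), the set $X$ generates $\overline{\overline{\Om}} = \Om$. Hence $X$ generates every oriented Reidemeister move of type 3.

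I expect the step requiring the most care to be the middle one: one must verify rigorously that the bar operation induces the stated permutation of the sixteen moves -- in particular keeping careful track, for $\Om3c$, $\Om3d$, $\Om3e$, $\Om3f$, of which strand slides over or under the distinguished crossing and of the well-oriented versus disoriented distinction -- and hence that $\overline{\mathcal{A}} = \mathcal{H}$. Should one wish to avoid the symmetry argument, the alternative is to imitate the proof of Lemma~\ref{H3moves} directly: starting from $\Om3a \in X$ together with all type 1 and type 2 moves (available by Lemmas~\ref{2c2d}, \ref{Type1}, and~\ref{2a2b} and Remark~\ref{remark4}), realize $\Om3c$ by Lemma~\ref{3c3d}, then $\Om3b$ via a sequence of $\Om3a, \Om2c, \Om2d$ moves and $\Om3e$ via a sequence of $\Om3b, \Om2a, \Om2b$ moves, and finally the remaining moves $\Om3d, \Om3f, \Om3g, \Om3h$ by explicit sequences of moves already shown to be generated, the needed realizations being the crossing-switched mirrors of the ones used in the proof of Lemma~\ref{H3moves}.
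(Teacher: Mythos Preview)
Your proof is correct, and the verification that the crossing-switch involution sends $\mathcal{A}$ bijectively onto $\mathcal{H}$ goes through exactly as you describe (in particular, $\{\Om1a,\Om1b,\Om2a,\Om3a\}\mapsto\{\Om1c,\Om1d,\Om2b,\Om3h\}$ and $\{\Om1a,\Om1b,\Om2b,\Om3a\}\mapsto\{\Om1c,\Om1d,\Om2a,\Om3h\}$ account for the two ``asymmetric'' pairs). The paper, however, does not invoke this symmetry. Its argument is the explicit one you sketch at the end as an alternative: starting from $\Om3a\in X$ together with all type~2 moves (available by Remark~\ref{remark4}), it realizes $\Om3c$ via Lemma~\ref{3c3d}, then recycles from the proof of Lemma~\ref{H3moves} the realizations of $\Om3b$ (by $\Om3a,\Om2c,\Om2d$) and $\Om3e$ (by $\Om3b,\Om2a,\Om2b$), and finally exhibits one new explicit realization of $\Om3h$ via $\Om3e,\Om2c,\Om2d$. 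At that point $X$ generates $\Om3h$ and all type~2 moves, so the \emph{proof} of Lemma~\ref{H3moves} (not just its statement) yields the remaining moves $\Om3d,\Om3f,\Om3g$. Your symmetry argument is shorter and more conceptual, and it explains \emph{why} the two collections have parallel behaviour; the paper's approach has the advantage of producing explicit move sequences and of making transparent that only the content of Lemma~\ref{H3moves} (namely, ``$\Om3h$ plus all $\Om2$ moves suffices'') is being reused, at the cost of one additional hand-drawn realization.
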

\begin{proof}
Let $X$ be any set-element in the collection $\mathcal{A}$. Then $\Om3a \in X$. Moreover, the set $X$ also contains either the move $\Om2a$ or $\Om2b$ and it generates all of the other Reidemeister moves of type 2 (as shown in Lemmas~\ref{2c2d} through~\ref{2a2b}).

By Lemma~\ref{3c3d}, the move $\Om3c$ may be realized by a sequence of moves $\Om3a, \Om2c$ and $\Om2d$, and thus the set $X$ generates the move $\Om3c$. As we have  shown in Lemma~\ref{H3moves}, the move $\Om3b$ may be realized through a sequence of the moves $\Om3a, \Om2c$ and $\Om2d$; and the move $\Om3e$ can be realized by a sequence of the moves $\Om3b,\Om2a$ and $\Om2b$. Therefore, the set $X$ generates the moves $\Om3b$ and $\Om3e$.

We next prove that the move $\Om3h$ may be realized by a sequence of the moves $\Om3e, \Om2c,$ and $\Om2d$, as shown below.
 \[ \includegraphics[scale=0.21]{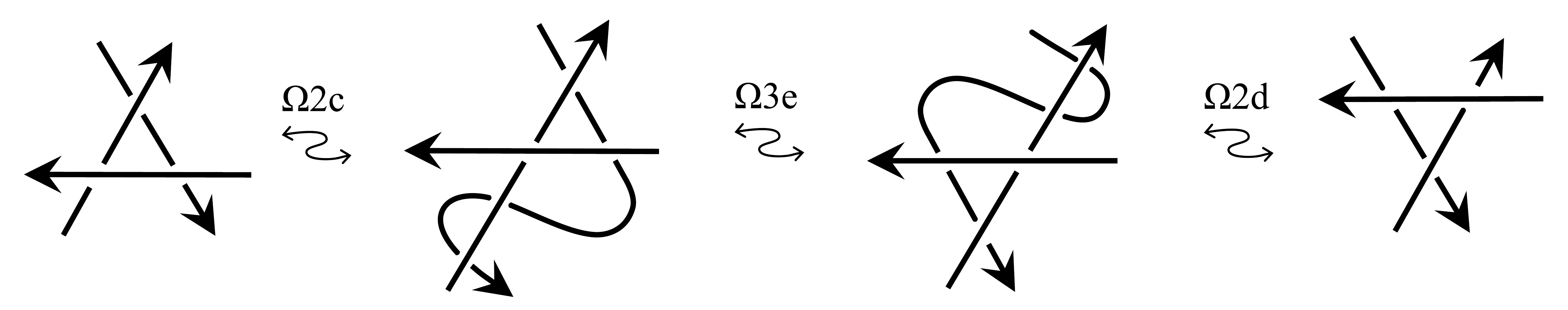}\]
Therefore, the set $X$ generates the move $\Om3h$. Since the move $\Om3h$ belongs to every set element in the collection $\mathcal{H}$, then by Lemma~\ref{H3moves}, the set $X$ generates the remaining Reidemeister moves of type 3.
Hence, every set $X \in \mathcal{A}$ generates all oriented $\Om3$ moves. 
\end{proof}

We are now ready to prove Theorem~\ref{P1Main}. The proof will demonstrate how the lemmas in this section all work together to show that every $X \in \mathcal{A} \cup \mathcal{H}$ generates all oriented Reidemeister moves. 

\begin{proof}[Proof of Theorem 2]
Lemmas~\ref{2c2d} and~\ref{Type1} combined, prove that every set $X \in \mathcal{A} \cup \mathcal{H}$ generates the moves $\Om2c$ and $\Om2d$, and the two oriented $\Om1$ moves not included in $X$.

Using the moves $\Om2c$ and $\Om2d$, Lemma \ref{3c3d} shows that each set $X\in \mathcal{A}$ generates the move $\Om3c$ and each $X \in \mathcal{H}$ generates the move $\Om3d$. With the move $\Om3c$ or $\Om3d$ at hand, Lemma \ref{2a2b} shows that $X \in \mathcal{A} \cup \mathcal{H}$ generates either $\Om2a$ or $\Om2b$ (whichever is missing from $X$). Lemmas~\ref{H3moves} and~\ref{A3moves} complete the proof of the theorem, by showing that every set $X \in \mathcal{A} \cup \mathcal{H}$ generates all of the remaining oriented $\Om3$ moves.

Therefore, any $X \in \mathcal{A} \cup \mathcal{H}$ is a generating set of oriented Reidemeister moves for knot and link diagrams.
\end{proof}


\subsection{Non-Generating Reidemeister Moves} \label{sec:NonGenSets}

In this section we prove that the 12 sets described in Theorem~\ref{P1Main} represent all minimal (4-element) generating sets of oriented Reidemeister moves for knot diagrams. To prove this, we show that certain oriented Reidemeister moves (or pairs of moves) cannot exist in a generating set containing four oriented Reidemesiter moves. We also show that a minimal generating set of oriented Reidemeister moves contains two $\Om1$ moves.

 To begin, we show in Lemma~\ref{Type1Ex} that two $\Om1$ moves are required in a generating set, and we explain why certain pairs of $\Om1$ moves do not generate all oriented Reidemeister moves.

The oriented $\Om1$ moves are the only Reidemeister moves that affect the writhe and the winding number. Recall that the \textit{writhe}, $\omega(D)$, of an oriented knot diagram $D$ is obtained by subtracting the number of negative crossings from the number of positive crossings in $D$. The \textit{winding number} (also called the \textit{rotation number}) of a diagram $D$, usually denoted by $\text{rot}(D)$, is obtained by smoothing all crossings in $D$ according to the orientation and adding the rotation numbers of the resulting oriented loops, where $\text{rot} \left ( \raisebox{-.15cm}{\includegraphics[height=.2in]{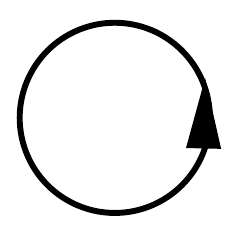}} \right ) = 1$ and $\text{rot} \left ( \raisebox{-.15cm}{\includegraphics[height=.2in]{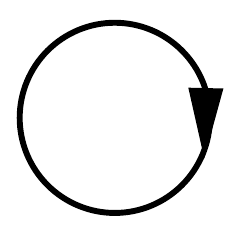}} \right ) = -1$. The writhe and winding number are regular isotopy invariants for oriented knots, and will be instrumental in showing why Theorem~\ref{P1Main} entirely excludes the pairs $(\Om1a, \Om1d)$ and $(\Om1b, \Om1c)$ from a generating set of oriented Reidemeister moves. Lemma~\ref{Type1Ex} justifies this exclusion and shows that two $\Om1$ moves are required in any generating set of oriented Reidemeister moves, and thus proving that a set with fewer than four moves cannot be a generating set. A brief proof of the next lemma was provided in~\cite{Pol}, but in order to have a self-contained paper, we provide a detailed proof here, which uses the same ideas as in~\cite{Pol}.

\begin{lemma}\cite[Lemma 3.1]{Pol} \label{Type1Ex}
Any generating set of Reidemeister moves contains at least two $\Om1$ moves. Moreover, neither of the two pairs $(\Om1a, \Om1d)$ or $(\Om1b, \Om1c)$ taken together with all $\Om2$ and $\Om3$ moves generate all of the oriented Reidemeister moves.
\end{lemma}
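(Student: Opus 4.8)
The plan is to exploit the two regular-isotopy invariants singled out just before the statement — the writhe $\omega(D)$ and the winding number $\mathrm{rot}(D)$ — and track how each oriented $\Om1$ move changes the pair $(\omega, \mathrm{rot})$. First I would record the effect of each $\Om1$ move: adding the clockwise positive loop ($\Om1a$) changes $(\omega,\mathrm{rot})$ by $(+1,-1)$, the counter-clockwise positive loop ($\Om1b$) by $(+1,+1)$, the clockwise negative loop ($\Om1c$) by $(-1,-1)$, and the counter-clockwise negative loop ($\Om1d$) by $(-1,+1)$; the moves $\Om2$ and $\Om3$ leave both quantities unchanged. Since any finite sequence of moves changes $(\omega,\mathrm{rot})$ by the sum of the individual contributions, the set of achievable changes coming from a collection of $\Om1$ moves is exactly the subset of $\mathbb{Z}^2$ generated (as a monoid, using only nonnegative numbers of applications, but in fact one may also run any $\Om1$ move backwards, so really as a subgroup) by the corresponding vectors in $\{(+1,-1),(+1,+1),(-1,-1),(-1,+1)\}$.

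Next I would argue the ``at least two'' claim. A single $\Om1$ move contributes a single vector $v$; together with backward moves it can only realize integer multiples of $v$, hence only a rank-one sublattice of $\mathbb{Z}^2$. But applying, say, $\Om1a$ somewhere and then its inverse elsewhere must be expressible, and more to the point there exist pairs of diagrams of the same oriented knot whose $(\omega,\mathrm{rot})$ differ by a vector not on the line $\mathbb{Z}v$ — concretely, one can realize the move $\Om1b$ starting from a diagram, which changes $(\omega,\mathrm{rot})$ by $(1,1)$, a vector not proportional to $v$ unless $v$ itself is $(1,1)$; and in that case one uses $\Om1a$, giving $(1,-1)$, again not proportional. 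So no single $\Om1$ move, even with isotopies and all $\Om2,\Om3$ moves, can realize all of the other $\Om1$ moves. For the pairs: the vectors for $\Om1a$ and $\Om1d$ are $(1,-1)$ and $(-1,1)$, which are negatives of one another and hence span only the rank-one lattice $\mathbb{Z}(1,-1)$; similarly $\Om1b,\Om1c$ give $(1,1)$ and $(-1,-1)$, spanning only $\mathbb{Z}(1,1)$. In either case the reachable changes in $(\omega,\mathrm{rot})$ lie on a single line through the origin, so, e.g., $\{\Om1a,\Om1d\}$ together with all $\Om2,\Om3$ moves cannot realize $\Om1b$ (whose effect $(1,1)$ is off that line), and likewise $\{\Om1b,\Om1c\}$ cannot realize $\Om1a$.

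The remaining point to make precise is the logical structure: I would note that ``$S$ generates move $\Om$'' means any single application of $\Om$ inside a localized disk is reproduced by a sequence from $S$ (with isotopies) on that same diagram, so the change in the invariants forced by one application of $\Om$ must be matched by the net change of the replacing sequence; since $\omega$ and $\mathrm{rot}$ are local quantities (a crossing or a loop inside the disk contributes the same regardless of what is outside), this reduces cleanly to the lattice computation above. I expect the main obstacle to be purely expository rather than mathematical: being careful that the argument is about what changes in $(\omega,\mathrm{rot})$ are \emph{forced} by each move versus what is \emph{achievable}, and making sure the ``two moves suffice to leave the line'' step is pinned down by exhibiting, for each of the two bad pairs, a specific target $\Om1$ move whose invariant-change vector is not an integer multiple of the common direction of the pair — which, as computed, it never is, since the four vectors are the four points $(\pm1,\pm1)$ and no two of them with distinct absolute direction are parallel.
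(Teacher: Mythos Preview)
Your argument is correct and uses the same invariants as the paper --- the writhe $\omega$ and the winding number $\mathrm{rot}$ --- so the underlying idea is identical. The packaging differs slightly: the paper works with the scalar $\omega+\mathrm{rot}$, which is preserved by $\Om1a,\Om1d$ but not by $\Om1b,\Om1c$, and then for the pair $(\Om1b,\Om1c)$ it makes a secondary cancellation argument (any sequence of $\Om1b,\Om1c$ moves that preserves $\omega+\mathrm{rot}$ must in fact preserve $\omega$ and $\mathrm{rot}$ separately). Your lattice formulation in $\mathbb{Z}^2$ treats the two bad pairs symmetrically --- each consists of a vector and its negative, hence spans only a rank-one sublattice --- which is a bit cleaner; in effect you are implicitly using $\omega-\mathrm{rot}$ for the second pair where the paper reuses $\omega+\mathrm{rot}$ and pays for it with an extra step. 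Either way the content is the same.
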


\begin{proof}
The move $\Om1a$ introduces (or removes, depending on the direction of the move) one positive crossing and a full clockwise rotation, which simultaneously increases (decreases) the writhe and decreases (increases) the winding number each by 1. In a similar manner, the move $\Om1d$ introduces (removes) one negative crossing and a full counter-clockwise rotation, which simultaneously decreases (increases) the writhe and increases (decreases) the winding number each by 1. Hence, moves $\Om1a$ and $\Om1d$ preserve $\omega + \text{rot}$, a quantity that is also preserved by all $\Om2$ and $\Om3$ moves. However, $\omega + \text{rot}$ is not preserved by the moves $\Om1b$ and $\Om1c$, which change $\omega + \text{rot}$ by $\pm2$. Thus, the moves $\Om1a$ and $\Om1d$ taken with all oriented Reidemeister moves $\Om2$ and $\Om3$ cannot generate moves $\Om1b$ and $\Om1c$.

In considering a set consisting of moves $\Om1b$ and $\Om1c$ taken with all oriented $\Om2$ and $\Om3$ moves, we must note that both moves $\Om1b$ and $\Om1c$ affect $\omega + \text{rot}$ by $\pm2$, while all other oriented moves preserve $\omega + \text{rot}$. Thus, any realization of an oriented Reidemeister move with this set must either not use any $\Om1b$ or $\Om1c$ moves or contains a pair of moves so that one move adds 2 to $\omega + \text{rot}$ and the other subtracts 2. As an example, if the forward $\Om1b$ move is applied, then either the inverse $\Om1b$ move or the forward $\Om1c$ move must also be applied to preserve $\omega + \text{rot}$. Similar statements apply to the forward $\Om1c$ move and the inverse moves. Each of these pairs of moves not only preserve $\omega + \text{rot}$, but they also preserve $\omega$ and $rot$ separately---two quantities that are not preserved by $\Om1a$ or $\Om1d$. Hence, the moves $\Om1b$ and $\Om1c$ taken with all oriented $\Om2$ and $\Om3$ moves will not generate moves $\Om1a$ and $\Om1d$.

For the same reasons that neither pair $(\Om1a, \Om1d)$ nor pair $(\Om1b, \Om1c)$, taken with all $\Om2$ and $\Om3$ moves, gives a generating set, we have that no single $\Om1$ move, taken with all $\Om2$ and $\Om3$ moves, is sufficient to generate all oriented Reidemeister moves. A single $\Om1$ move that preserves $\omega + \text{rot}$ (such as moves $\Om1a$ and $\Om1d$) cannot be used to generate moves that do not preserve that quantity (such as moves $\Om1b$ and $\Om1c$). Hence, two distinct $\Om1$ moves are required in any generating set of oriented Reidemeister moves.
\end{proof}

\begin{remark} \label{minimality}
It is clear that at least one oriented Reidemeister move of each type must be included in a generating set of Reidemeister moves. By Lemma~\ref{Type1Ex}, a generating set of Reidemeister moves contains at least two $\Om1$ moves. Since we have shown in Theorem~\ref{P1Main} that four moves are sufficient to generate all 16 oriented Reidemeister moves, this proves the minimality of a 4-element generating set. In particular, this proves that the 12 sets in the collections $\mathcal{A}$ and $\mathcal{H}$ are minimal generating sets of oriented Reidemeister moves.
\end{remark}

We refer to the four pairs of $\Om1$ moves, $(\Om1a, \Om1b), (\Om1a, \Om1c), (\Om1b, \Om1d)$ and $(\Om1c, \Om1d)$, in a generating set of Reidemeister moves as \textit{generating $\Om1$ moves}.

Given an oriented link or knot diagram, smooth all of its crossings according to orientation, as shown in Figure~\ref{fig:smooth}. Denote by $C^+$ and $C^-$ the number of counter-clockwise and respectively clockwise oriented circles in the resulting smoothed diagram. 
\begin{figure}[h] 
\begin{center}
\includegraphics[scale=0.2]{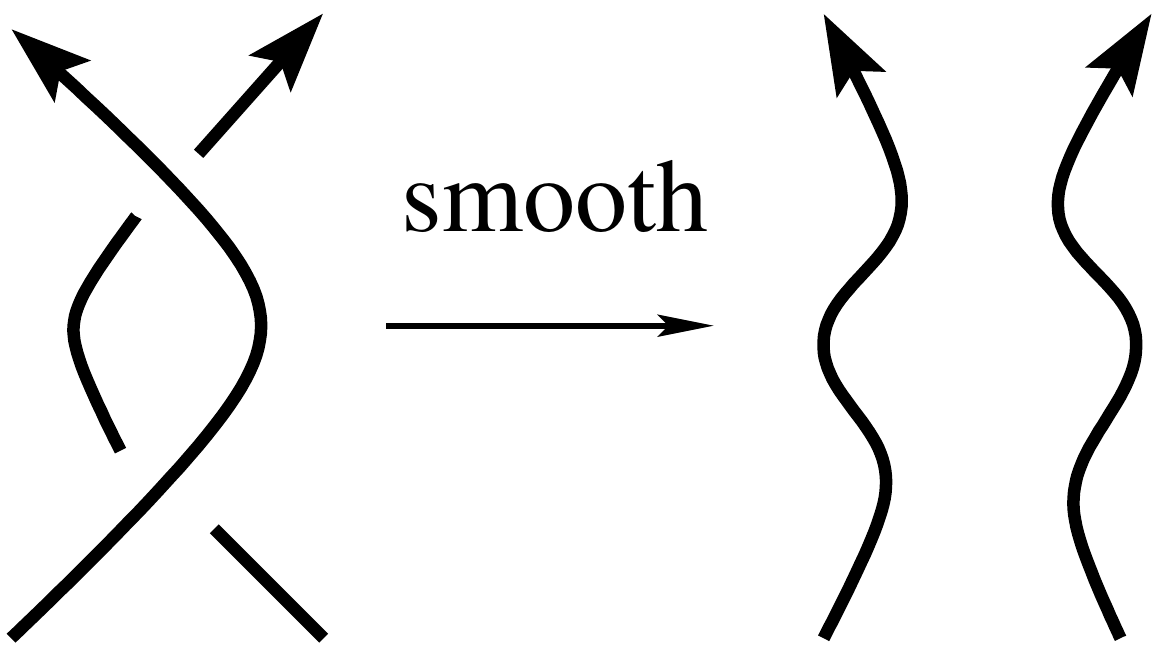} \hspace{0.75cm}  \includegraphics[scale=0.2]{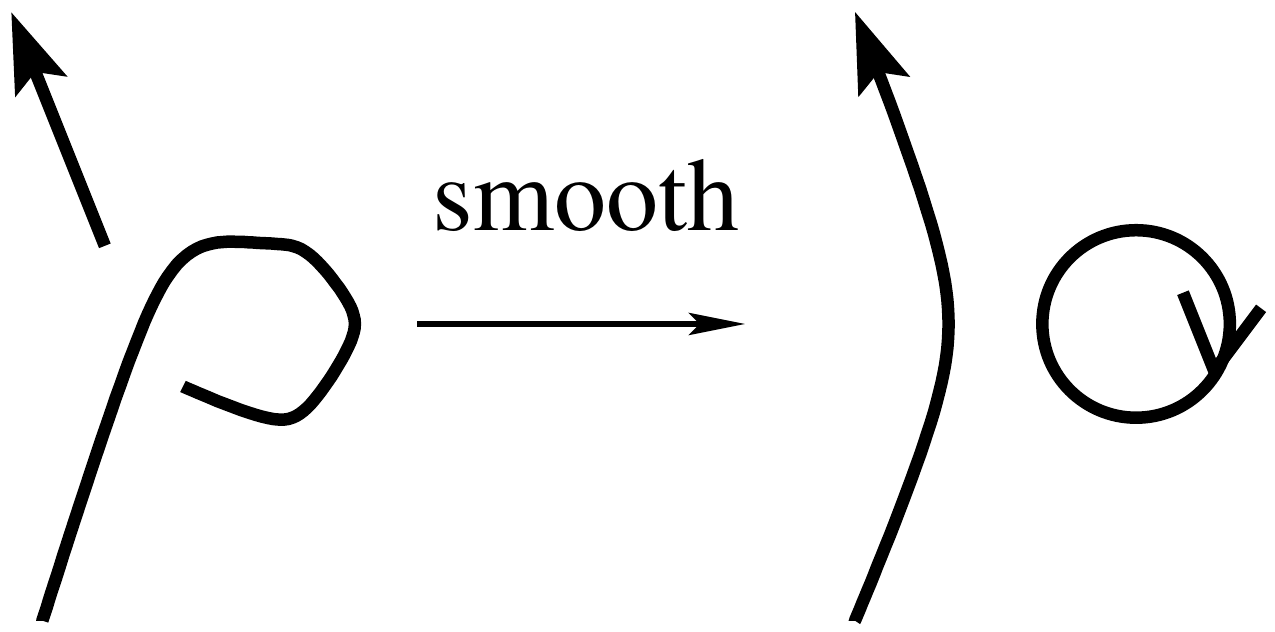}     \hspace{0.75cm}  \includegraphics[scale=0.2]{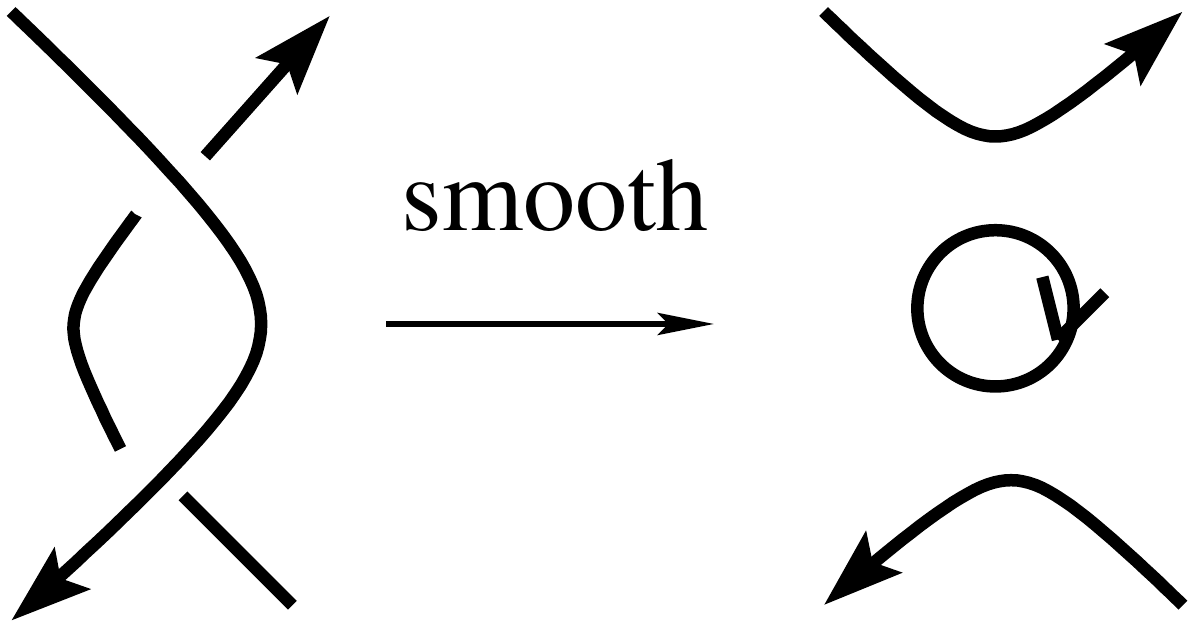}          
\end{center}
\caption{Smoothing a diagram according to orientation} 
\label{fig:smooth}
\end{figure}

\begin{remark} \label{smooth}
We collect the behavior of the oriented Reidemeister moves with respect to the smoothing of the corresponding diagrams.
\begin{enumerate}
\item [(i)]
Move $\Om1a$ changes $C^-$ and $C^+ + C^- + \omega$, while it preserves $C^+$ and $C^+ + C^- - \omega$.\\
Move $\Om1b$ changes $C^+$ and $C^+ + C^- + \omega$, while it preserves $C^-$ and $C^+ + C^- - \omega$. \\
Move $\Om1c$ changes $C^-$ and $C^+ + C^- - \omega$, while it preserves $C^+$ and $C^+ + C^- + \omega$. \\
Move $\Om1d$ changes $C^+$ and $C^+ + C^- - \omega$, while it preserves $C^-$ and $C^+ + C^- + \omega$.

\item [(ii)] 
Moves $\Om2a$ and $\Om2b$ preserve both $C^+$ and $C^-$, and thus they also preserve $C^+ + C^- \pm \omega$ since they preserve the writhe.\\
Moves $\Om2c$ and $\Om2d$ may change $C^+, C^-$, and $C^+ + C^- \pm \omega$.

\item [(iii)] 
Moves $\Om3a$ and $\Om3h$ may change $C^+, C^-$, and $C^+ + C^- \pm \omega$.\\
Moves $\Om3b$ through $\Om3g$ preserve $C^+$ and $C^-$, and thus they also preserve $C^+ + C^- \pm \omega$ since they preserve the writhe.
\end{enumerate}
\end{remark}

We use Remark~\ref{smooth} to prove that there are no minimal (4-element) generating sets of Reidemeister moves that contain one of the moves $\Om2a$ or $\Om2b$ and one $\Om3$ move distinct from the moves $\Om3a$ and $\Om3h$. In fact, we prove a stronger result.

\begin{theorem} \label{NoGenOmega3}
Let $S$ be a set formed by two generating $\Om1$ moves, both $\Om2a$ and $\Om2b$ moves, and one $\Om3$ move distinct from the moves $\Om3a$ and $\Om3h$. Then $S$ is not a generating set of Reidemeister moves.
\end{theorem}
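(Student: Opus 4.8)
The plan is to exhibit a numerical invariant that is preserved by every move in $S$ but is \emph{not} preserved by at least one oriented Reidemeister move, so $S$ cannot generate that move. The key observation from Remark~\ref{smooth} is that the only oriented $\Om3$ moves that can alter $C^+$, $C^-$, or $C^+ + C^- \pm \omega$ are $\Om3a$ and $\Om3h$; every move $\Om3b$ through $\Om3g$ preserves $C^+$ and $C^-$ separately (and hence $C^+ + C^- \pm \omega$, since those moves preserve the writhe). Since the unique $\Om3$ move in $S$ is, by hypothesis, distinct from $\Om3a$ and $\Om3h$, it preserves both $C^+$ and $C^-$. The moves $\Om2a$ and $\Om2b$ also preserve $C^+$ and $C^-$ by Remark~\ref{smooth}(ii). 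So the only moves in $S$ that can change $C^\pm$ are the two generating $\Om1$ moves.

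Next I would analyze, case by case, which pair of generating $\Om1$ moves $S$ contains; by the definition given just before the statement, the pair is one of $(\Om1a,\Om1b)$, $(\Om1a,\Om1c)$, $(\Om1b,\Om1d)$, $(\Om1c,\Om1d)$. Using Remark~\ref{smooth}(i), in each of these four cases I would identify a linear combination of $C^+$, $C^-$, and $\omega$ that is simultaneously preserved by \emph{both} $\Om1$ moves in the pair. For instance, $\Om1a$ preserves $C^+$ and $C^+ + C^- - \omega$, while $\Om1b$ preserves $C^-$ and $C^+ + C^- + \omega$; a short computation should produce a common preserved quantity for the pair $(\Om1a,\Om1b)$ (e.g.\ a suitable combination of $C^+ + C^- \pm\omega$ with $\omega$ or with $C^+$, $C^-$). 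Since the two $\Om2$ moves and the single $\Om3$ move in $S$ preserve $C^+$, $C^-$, and $\omega$ individually (the $\Om2a,\Om2b$ and $\Om3b$--$\Om3g$ moves all preserve the writhe), they preserve that same combination a fortiori. Hence the chosen quantity is an invariant of every move in $S$.

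Finally, I would exhibit an oriented Reidemeister move not in $S$ that \emph{fails} to preserve this quantity. The natural candidates are $\Om3a$ and $\Om3h$ themselves, which Remark~\ref{smooth}(iii) says may change $C^\pm$; one can give (or cite from the figures) an explicit local picture showing that $\Om3a$, say, changes $C^+$ while fixing $C^-$ and $\omega$, or vice versa, so it changes whatever combination of $C^+$, $C^-$, $\omega$ was selected above. Alternatively, if in some case the selected quantity happens to be preserved by both $\Om3a$ and $\Om3h$, one falls back on the $\Om1$ moves outside the chosen generating pair, using Lemma~\ref{Type1Ex}'s $\omega + \mathrm{rot}$ argument or the analogous $C^\pm$ bookkeeping. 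The main obstacle I anticipate is the bookkeeping in the second step: one must be careful that the invariant produced for a given $\Om1$-pair is genuinely changed by some excluded move and not accidentally preserved by all $16$ oriented moves (which would make it useless), so each of the four cases needs its own explicit choice of combination together with a witnessing move. Once those four choices are pinned down — and the behavior of $\Om3a$, $\Om3h$ on $C^\pm$ is read off from a sample diagram — the argument closes: $S$ preserves a quantity that some oriented Reidemeister move does not, so $S$ is not a generating set.
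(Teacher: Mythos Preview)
Your approach is exactly the paper's: for each of the four generating $\Om1$ pairs, exhibit a combination of $C^+$, $C^-$, $\omega$ preserved by every move in $S$ but changed by $\Om3a$, $\Om3h$ (and $\Om2c$, $\Om2d$). The bookkeeping you flag as the main obstacle is in fact already done in Remark~\ref{smooth}(i) with no computation required---$(\Om1a,\Om1b)$ both preserve $C^+ + C^- - \omega$, $(\Om1a,\Om1c)$ both preserve $C^+$, $(\Om1b,\Om1d)$ both preserve $C^-$, and $(\Om1c,\Om1d)$ both preserve $C^+ + C^- + \omega$---and Remark~\ref{smooth}(iii) records that $\Om3a$ and $\Om3h$ may change each of these four quantities, so your fallback is never needed.
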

\begin{proof}
The proof relies heavily on Remark~\ref{smooth} and it is similar to that given by Polyak in~\cite[Lemma 3.8]{Pol}, where he proves that a set formed by two generating $\Om1$ moves and the moves $\Om2a, \Om2b$ and $\Om3b$ is not generating. The main idea is that the given set $S$ does not generate the moves $\Om2c$ and $\Om2d$, and neither the moves $\Om3a$ and $\Om3h$. 

Note that the moves $\Om2a$ and $\Om2b$ preserve $C^+, C^-$ and $C^+ + C^- \pm \omega$. In addition, the pair $(\Om1a, \Om1b)$ of generating $\Om1$ moves preserves $C^+ + C^- - \omega$; the pair $(\Om1a, \Om1c)$ preserves $C^+$; the pair $(\Om1b, \Om1d)$ preserves $C^-$; and the pair $(\Om1c, \Om1d)$ preserves $C^+ +C^- + \omega$. Thus, if $S$ contains $(\Om1a, \Om1b)$, then all moves in $S$ preserve $C^+ + C^- - \omega$. If S contains the pair $(\Om1a, \Om1c)$, then all moves in $S$ preserve $C^+$. If $S$ contains $(\Om1b, \Om1d)$, then all moves in $S$ preserve $C^-$. Finally, if $S$ contains the pair $(\Om1c, \Om1d)$, then all moves in $S$ preserve $C^+ +C^- + \omega$. In each of these cases, $S$ cannot generate the moves $\Om2c, \Om2d$ and  $\Om3a, \Om3h$, since these moves may change $C^+, C^-$ and $C^+ + C^- \pm \omega$.
\end{proof}

\begin{corollary}
A minimal generating set of Reidemeister moves that contains one of the moves $\Om2a$ or $\Om2b$ must contain either the move $\Om3a$ or the move $\Om3h$. Moreover, a generating set of Reidemeister moves that contains only one move $\Om3$ that is neither the move $\Om3a$ nor the move $\Om3h$ must contain at least one of the moves $\Om2c$ or $\Om2d$.
\end{corollary}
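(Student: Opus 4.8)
The plan is to deduce both assertions from Theorem~\ref{NoGenOmega3} and Lemma~\ref{Type1Ex}, the second one also using a parity refinement of the smoothing invariants of Remark~\ref{smooth}. For the first assertion, let $S$ be a minimal generating set containing one of $\Om2a$ or $\Om2b$. By Remark~\ref{minimality} we have $|S|=4$, so $S$ is made up of exactly two $\Om1$ moves, one $\Om2$ move, and one $\Om3$ move; the $\Om2$ move is therefore the one named in the hypothesis, and by Lemma~\ref{Type1Ex} the two $\Om1$ moves form a generating pair, since neither $(\Om1a,\Om1d)$ nor $(\Om1b,\Om1c)$ generates even together with all $\Om2$ and $\Om3$ moves. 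Suppose, for contradiction, that the unique $\Om3$ move of $S$ is neither $\Om3a$ nor $\Om3h$. Adjoin to $S$ whichever of $\Om2a,\Om2b$ it is missing, obtaining a set $S'$; as $S\subseteq S'$, the set $S'$ still generates, yet $S'$ now has the exact form forbidden by Theorem~\ref{NoGenOmega3} --- two generating $\Om1$ moves, both $\Om2a$ and $\Om2b$, and one $\Om3$ move distinct from $\Om3a$ and $\Om3h$ --- so $S'$ does not generate, a contradiction. Hence the $\Om3$ move of $S$ is $\Om3a$ or $\Om3h$.

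For the second assertion I would argue by contradiction. Let $S$ be a generating set whose only $\Om3$ move $m$ lies in $\{\Om3b,\dots,\Om3g\}$, and suppose $S$ contains neither $\Om2c$ nor $\Om2d$. Then the type-$2$ moves of $S$ all lie in $\{\Om2a,\Om2b\}$, so every move of $S$ is an $\Om1$ move, an $\Om2a$ or $\Om2b$ move, or the move $m$, and I claim each of these preserves the parity of $\omega+\text{rot}$. Indeed, by the computation in the proof of Lemma~\ref{Type1Ex} the $\Om1$ moves change $\omega+\text{rot}$ by $0$ or $\pm2$; and by Remark~\ref{smooth} the moves $\Om2a,\Om2b$ and $\Om3b$ through $\Om3g$ preserve the writhe together with $C^{+}$ and $C^{-}$, hence preserve $\text{rot}=C^{+}-C^{-}$ and therefore $\omega+\text{rot}$ exactly. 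On the other hand, $\Om2c$ keeps the writhe fixed while creating or destroying exactly one circle when all crossings are smoothed according to orientation (cf. Figure~\ref{fig:smooth}), so it changes $C^{+}+C^{-}$ by $\pm1$; since $\text{rot}\equiv C^{+}+C^{-}\pmod 2$, the move $\Om2c$ changes $\omega+\text{rot}$ by an odd number. Thus no sequence of moves from $S$ can realize $\Om2c$, contradicting that $S$ generates, and so $S$ must contain $\Om2c$ or $\Om2d$.

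The underlying method is exactly the invariant counting used for Theorem~\ref{NoGenOmega3}: exhibit a quantity preserved by every move of the set but changed by the move one wishes to build. The one point needing extra care --- and the reason the second assertion is phrased for arbitrary generating sets rather than only $4$-element ones --- is that when a generating set is allowed three or four $\Om1$ moves, none of the quantities $C^{+}$, $C^{-}$, $C^{+}+C^{-}\pm\omega$ from the proof of Theorem~\ref{NoGenOmega3} remains invariant; the parity of $\omega+\text{rot}$ is the quantity that survives, and the only nonroutine step is verifying that $\Om2c$ (equivalently $\Om2d$) genuinely changes it.
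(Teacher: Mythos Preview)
Your argument for the first assertion is correct and is essentially the paper's: a minimal generating set has exactly two $\Om1$ moves (forming a generating pair by Lemma~\ref{Type1Ex}), one $\Om2$ move, and one $\Om3$ move, so adjoining the missing one of $\Om2a,\Om2b$ lands you squarely in Theorem~\ref{NoGenOmega3}.

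For the second assertion there is a genuine error. You assert that $\Om2c$ ``creates or destroys exactly one circle'' under oriented smoothing, so that $C^{+}+C^{-}$ changes by $\pm1$ and the parity of $\omega+\mathrm{rot}$ flips. This is false. The quantity $\mathrm{rot}=C^{+}-C^{-}$ is the Whitney rotation number of the immersed curve underlying the diagram, and it is unchanged by every $\Om2$ and $\Om3$ move; this is exactly what the paper uses in the proof of Lemma~\ref{Type1Ex}, where it records that $\omega+\mathrm{rot}$ ``is also preserved by all $\Om2$ and $\Om3$ moves''. Concretely, when the two new crossings of an $\Om2c$ move are smoothed, the bigon closes up into one new Seifert circle \emph{and} the outer arcs are simultaneously reconnected by a surgery, so the net change in $C^{+}+C^{-}$ is $0$ or $\pm2$, never odd. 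Hence every oriented Reidemeister move preserves the parity of $\omega+\mathrm{rot}$, and this invariant cannot possibly witness that $S$ fails to realize $\Om2c$; your contradiction does not go through. Your observation that the second assertion, read literally for arbitrary generating sets with three or four $\Om1$ moves, is not immediately covered by the invariants in Theorem~\ref{NoGenOmega3} is well taken, but the parity you propose does not fill that gap; the paper's own proof simply invokes Theorem~\ref{NoGenOmega3}.
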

\begin{proof} The proof follows directly from Theorem~\ref{NoGenOmega3} and the fact that a minimal, four-move, generating set of Reidemeister moves contains exactly one $\Om2$ move, one $\Om3$ move and two $\Om1$ moves.
\end{proof}

Next, we show that neither the move $\Om2c$ nor the move $\Om2d$ can be present in a minimal (4-element) generating set for all oriented Reidemeister moves. We will achieve this through the use of a few lemmas.
We start by showing that any realization of the move $\Om2a$ or $\Om2b$ by making use of either $\Om2c$ or $\Om2d$ relies on a type 3 move distinct from the moves $\Om3a$ and $\Om3h$ (which are the only type 3 moves present in the minimal generating sets of Theorem~\ref{P1Main}).

\begin{lemma}\label{2a2b3a3h}
Let $M$ be a sequence of oriented Reidemeister moves that realizes either the move $\Om2a$ or the move $\Om2b$. Suppose further that $M$ contains only one type 2 move that is either $\Om2c$ or $\Om2d$. Then $M$ must include an oriented $\Om3$ move that is neither $\Om3a$ nor $\Om3h$. 
\end{lemma}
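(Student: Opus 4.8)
The plan is to argue by contradiction. Assume $M$ realizes $\Om2a$ — the argument when $M$ realizes $\Om2b$ is symmetric — and assume, contrary to the assertion, that every type~$3$ move occurring in $M$ belongs to $\{\Om3a,\Om3h\}$ (or that $M$ uses no type~$3$ move at all). Let $\mu$ be the unique move of $M$ lying in $\{\Om2c,\Om2d\}$; the case $\mu=\Om2d$ being symmetric to $\mu=\Om2c$, I treat $\mu$ as an $\Om2c$ move. The goal is to extract, from the smoothing data recorded in Remark~\ref{smooth}, a quantity whose net change along $M$ must be $0$ because $\Om2a$ preserves it on every diagram, yet must be nonzero because $\mu$ changes it and no other move available to $M$ can compensate for that change.

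First I would fix a closing-up of the localized disk of the $\Om2a$ move being realized, i.e.\ an ambient diagram $D$ containing that disk, so that $M$ carries $D$ to the diagram $D'$ obtained from $D$ by a single $\Om2a$ move. By Remark~\ref{smooth}(ii) the move $\Om2a$ preserves each of $C^+$, $C^-$, $C^+ + C^- + \omega$ and $C^+ + C^- - \omega$, so the total change produced by the whole sequence $M$ of each of these four quantities is $0$. I would then catalogue, using Remark~\ref{smooth}, the possible effect on these quantities of each move that can appear in $M$: each oriented $\Om1$ move preserves exactly two of the four (which two being determined by its letter), the moves $\Om2a,\Om2b$ preserve all four, and the only moves available to $M$ that can alter all four are $\mu$ and the moves $\Om3a,\Om3h$. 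The crux is then to separate $\mu$ from everything else in $M$: since the two crossings created or destroyed by $\mu$ bound a \emph{well-oriented} bigon, $\mu$ performs a band surgery on the collection of Seifert circles (it alters the oriented smoothing of the diagram by more than the addition or deletion of a trivial oriented circle), whereas the $\Om1$ moves only add or delete a trivial oriented circle, the moves $\Om2a,\Om2b$ leave the oriented smoothing unchanged, and — after a local check at a well-oriented triangle — the moves $\Om3a,\Om3h$ affect it only by an isotopy together with an even change in the Seifert circle count. Because $M$ contains just the single coherent type~$2$ move $\mu$, this band surgery cannot be undone, contradicting the fact that $\Om2a$ leaves the oriented smoothing of $D$ unchanged; hence $M$ must contain a type~$3$ move outside $\{\Om3a,\Om3h\}$.

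The main obstacle is precisely that last step. One must pin down the exact invariant of the oriented smoothing — most plausibly a suitable $\mathbb{Z}/2$-reduction of a combination of $C^+$, $C^-$, $\omega$ and the crossing number, or the count of Seifert circles taken modulo the equivalence generated by the $\Om1$ moves — that is genuinely preserved by all of $\Om1a$, $\Om1b$, $\Om1c$, $\Om1d$, $\Om2a$, $\Om2b$, $\Om3a$, $\Om3h$ (very likely only after using invariance of the writhe to control the combined contribution of the $\Om1$ moves) while being changed by $\Om2c$ (and, symmetrically, by $\Om2d$); this also requires choosing the closing-up of the disk so that the band surgery performed by $\mu$ really is nontrivial at the moment it is applied. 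Verifying that $\Om3a$ and $\Om3h$ do respect this invariant, despite Remark~\ref{smooth}(iii) allowing them to change $C^+$, $C^-$ and $C^+ + C^- \pm \omega$, is the delicate point, and is exactly where the distinction between a well-oriented triangle (for $\Om3a$, $\Om3h$) and a disoriented one (for $\Om3b$ through $\Om3g$) enters and must be examined locally on the three Seifert smoothings at the triangle.
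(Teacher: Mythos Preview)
Your proposal has a genuine gap at precisely the point you yourself flag as ``the main obstacle'': you never actually produce an invariant that is preserved by all $\Om1$ moves, by $\Om2a$, $\Om2b$, and by $\Om3a$, $\Om3h$, while being changed by a single application of $\Om2c$ (or $\Om2d$). Remark~\ref{smooth} gives no such tool, since $\Om3a$, $\Om3h$ and $\Om2c$, $\Om2d$ are grouped together there as the moves that ``may change'' all four of $C^+$, $C^-$, $C^+ + C^- \pm \omega$. Your suggested $\mathbb{Z}/2$-reduction does not obviously work either: an $\Om2c$ move changes $C^+ + C^-$ by $\pm 1$, but so does every $\Om1$ move, and the writhe constraint (that the $\Om1$ contributions to $\omega$ cancel) does not force their contributions to $C^+ + C^-$ to cancel, since $\Om1a$ and $\Om1c$ have opposite effects on $\omega$ but the same effect on $C^-$. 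Nor is it clear how to choose a closing-up of the disk so that $\mu$ is guaranteed to change your invariant. The contradiction you are aiming for is therefore not established, and the difficulty you identify is not a mere technicality but the heart of the matter.

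The paper avoids this difficulty by arguing structurally inside the localized disk rather than by a global invariant. Because the two strands in the disk of $\Om2a$ (or $\Om2b$) are \emph{similarly} oriented, while $\Om2c$ requires \emph{oppositely} oriented strands, $\Om2c$ cannot be applied until an $\Om1$ twist has been introduced on one strand, with the twist facing the other strand; $\Om2c$ is then applied between the untouched strand and the outer arc of that twist. In this specific local configuration one checks directly (by drawing the smoothing) that $\Om2c$ actually preserves $C^+$, $C^-$ and $C^+ + C^- \pm \omega$, so the net change of these quantities over the forced $\Om1$, $\Om2c$, $\Om1^{-1}$ steps is zero. More decisively, the triangle on which the required type~3 move acts is adjacent to the $\Om1$-loop and is therefore \emph{disoriented}; since $\Om3a$ and $\Om3h$ involve well-oriented triangles, this immediately rules them out. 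It is this local geometric observation about the triangle, not a global invariant, that carries the argument.
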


\begin{proof}To avoid repetition, we will consider only the realization of the move $\Om2b$, since the case for the move $\Om2a$ follows similarly.
When realizing an oriented type 2 move, a different oriented type 2 move needs to be used in the localized disk of the move to be realized.  In particular, since $M$ is assumed to contain exactly one type 2 move, which is either $\Om2c$ or $\Om2d$, the proof splits into two cases.

\textbf{Case 1:} Suppose that $M$ contains the move $\Om2c$.

The move $\Om2b$ involves two strands with the same orientation in the localized disk of the move, while $\Om2c$  involves  two strands with opposite orientation. Let $D_1$ denote the left-hand side of the move $\Om2b$ containing two parallel strands with the same orientation, and let $D_2$ denote the right-hand side of the move $\Om2b$. Starting with diagram $D_1$, in order for the move $\Om2c$ to be applied, the orientation of one of the strands must locally change, which can be done only via an oriented $\Om1$ move; this move is applied to one of the strands in $D_1$ in such a way that the twist faces the other strand in $D_1$. After the application of an $\Om1$ move, the resulting diagram is ready for the application of the move $\Om2c$ between the strand without a twist and the outer edge of the twist (see for example Lemma \ref{2a2b}). The resulting diagram contains three crossings. Since the right-hand side diagram, $D_2$, of the move $\Om2b$ contains two crossing, the sequence $M$ must use the same $\Om1$ move in opposite direction to eliminate the twist, so that to ensure that the final diagram $D_2$ contains two crossings and that the sequence $M$ preserves the writhe $\omega$, a quantity preserved by the move $\Om2b$ that is to be realized. But before the $\Om1$ move can be applied in backward direction to eliminate the twist, a type 3 move needs to be applied to slide the strand over/under the crossing in the twist, so that the loop of the twist is free of crossings. Hence, $M$ contains a type 3 move.

The move $\Om2c$ may change $C^+, C^-$ and $C^+ + C^- \pm \omega$. However, when $\Om2c$ is applied between two strands where one of them is an arc $\ell$ that is part of the  loop introduced by the application of the move $\Om1$, then $\Om2c$ preserves $C^+, C^-$ and $C^+ + C^- \pm \omega$, as seen below.
\[ \includegraphics[scale=0.2]{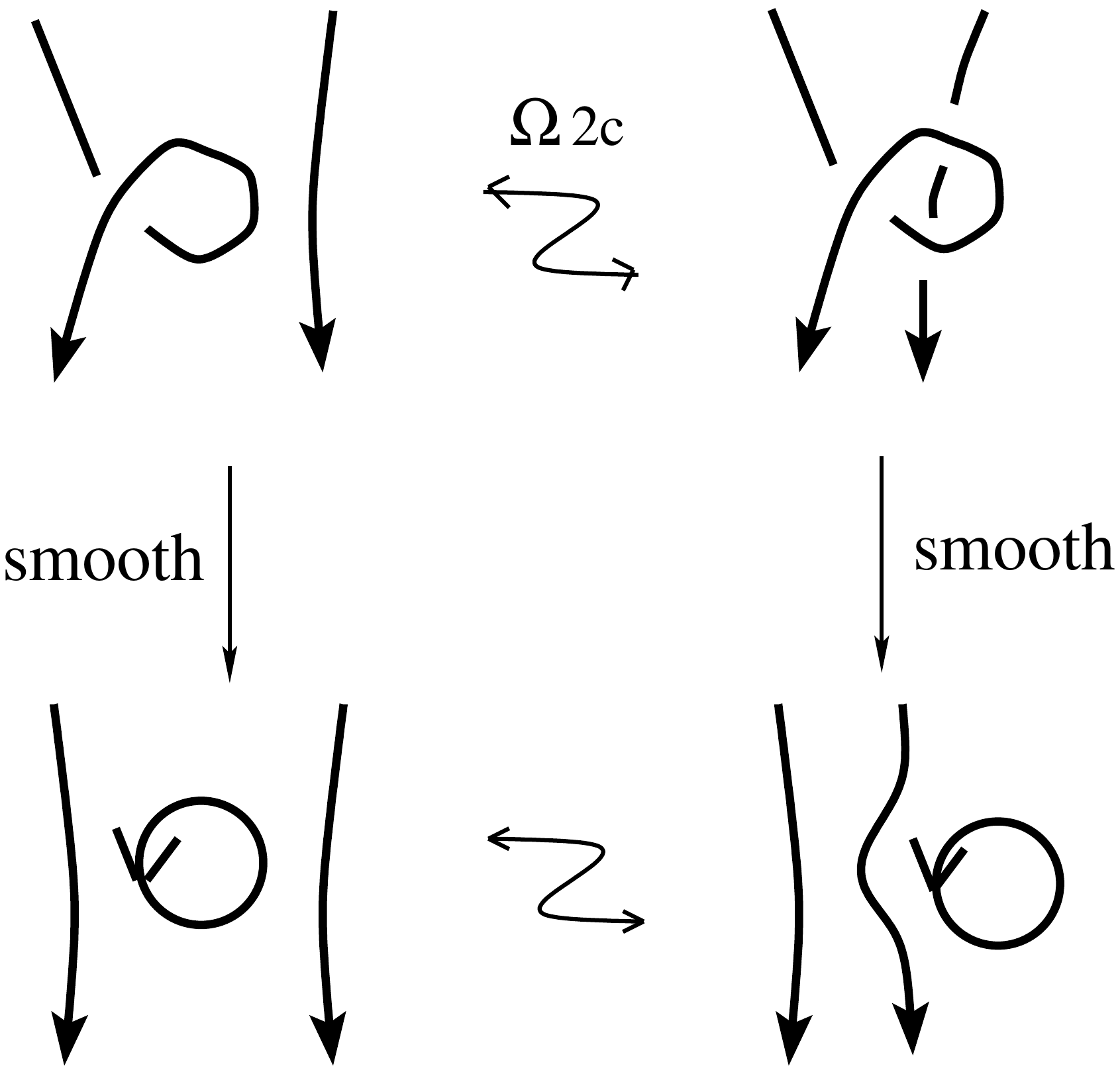}
\put(-80, 83){\fontsize{7}{7}$\ell$}
\put(-9, 75){\fontsize{7}{7}$\ell$}
 \]

So, up to this point in the realization of the move $\Om2b$, the net change for the quantities $C^+, C^-$ and $C^+ + C^- \pm \omega$ is zero. By Remark~\ref{smooth}, all of these quantities are preserved by the move $\Om2b$, therefore the type 3 move in $M$ must also preserve these quantities. Since the type 3 moves that preserve the quantities $C^+, C^-$ and $C^+ + C^- \pm \omega$ are $\Om3b$ through $\Om3g$, we conclude that $M$ includes an oriented $\Om3$ move that is neither $\Om3a$ nor $\Om3h$. This  can also be seen by noting the triangle where the type 3 move is applied is a disoriented triangle (recall that moves  $\Om3a$ and $\Om3h$ involve well-oriented triangles).

\textbf{Case 2:} Suppose that $M$ contains the move $\Om2d$.

The proof for this case is done in the same way as the previous case, since $\Om2d$ behaves similarly to the move $\Om2c$; that is, the move $\Om2d$ involves oppositely oriented strands and when applied between two strands where one of them is the outer edge of a type one move, then $\Om2d$ preserves $C^+, C^-$ and $C^+ + C^- \pm \omega$.
\end{proof} 

\begin{remark} \label{neat}
From Lemma~\ref{2a2b3a3h} we conclude that a minimal generating set that includes $\Om3a$ or $\Om3h$ as the only type 3 Reidemeister move and includes $\Om2c$ or $\Om2d$ as the only type 2 Reidemeister move must be shown to generate another type 3 move (distinct from $\Om3a$ or $\Om3h$) before it can be shown to generate either the move $\Om2a$ or the move $\Om2b$. 
\end{remark}

\begin{lemma}\label{2c2d2a2b}
Let $M$ be a sequence of oriented Reidemeister moves that realizes either the move $\Om2c$ or the move $\Om2d$. Suppose further that $M$ contains only one type 2 move that is not the move to be realized. Then $M$ must include one of the moves $\Om3a$ or $\Om3h$ in the sequence. 
\end{lemma}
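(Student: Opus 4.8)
\emph{Overall strategy.} The plan is to follow the template of Lemma~\ref{2a2b3a3h}. First, a left--right mirror reflection of all diagrams interchanges $\Om2c\leftrightarrow\Om2d$ and $\Om3a\leftrightarrow\Om3h$ (while permuting the $\Om1$ moves and the moves $\Om3b,\dots,\Om3g$ among themselves and interchanging $\Om2a\leftrightarrow\Om2b$), and it carries a realization of $\Om2c$ to one of $\Om2d$; so it suffices to treat the case in which $M$ realizes $\Om2c$, which I now assume. As recalled in the proof of Lemma~\ref{2a2b3a3h}, realizing an oriented type-$2$ move forces the use of a type-$2$ move of a different kind inside its localized disk; since by hypothesis $M$ has a single type-$2$ move, that move is some $m_2\in\{\Om2a,\Om2b,\Om2d\}$ and $M$ does not contain $\Om2c$. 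Assume, toward a contradiction, that $M$ uses neither $\Om3a$ nor $\Om3h$; then every type-$3$ move of $M$ is one of $\Om3b,\dots,\Om3g$.

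\emph{The cases $m_2\in\{\Om2a,\Om2b\}$.} Here I would follow the oriented smoothing of the localized disk of $\Om2c$ along the moves of $M$. In the source of $\Om2c$ (two anti-parallel arcs) the oriented smoothing joins the four boundary points of the disk ``straight across'', whereas a direct check shows that in the target (a coherently oriented bigon) it joins them in the transposed way and creates one additional coherently oriented circle. Of all the Reidemeister moves, the only ones that can change the way the arcs of the oriented smoothing connect the boundary of a localized disk are the coherent-bigon moves $\Om2c,\Om2d$ and the well-oriented-triangle moves $\Om3a,\Om3h$: each of $\Om1a,\dots,\Om1d,\Om2a,\Om2b$ alters the oriented smoothing only by adjoining or removing a separate small circle, and each of $\Om3b,\dots,\Om3g$ leaves the oriented smoothing unchanged (Remark~\ref{smooth}). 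But $M$ realizes $\Om2c$, hence must change this connectivity, while under our hypotheses every move in $M$ (the $\Om1$ moves, the move $m_2\in\{\Om2a,\Om2b\}$, and the moves $\Om3b,\dots,\Om3g$) preserves it. This contradiction shows $M$ must contain $\Om3a$ or $\Om3h$.

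\emph{The case $m_2=\Om2d$: the main obstacle.} This case is the one actually needed in the sequel (a minimal generating set whose only type-$2$ move is $\Om2c$ must generate $\Om2d$, and symmetrically), and it is the hard one: since $\Om2d$ is itself a coherent-bigon move it can effect the connectivity change above, so the argument just given breaks down. Here I would argue geometrically, paralleling the second half of the proof of Lemma~\ref{2a2b3a3h}. The idea is that, because the coherent bigon produced by $\Om2c$ is oriented oppositely to those produced by all of $M$'s $\Om2d$ moves, any realization of $\Om2c$ avoiding $\Om3a$ and $\Om3h$ must at some stage introduce a kink with an $\Om1$ move and later remove it, and that before such a removal the kink's loop has to be cleared by passing a strand over or under the loop's crossing --- a type-$3$ move one of whose triangle sides runs along the (coherently oriented) loop. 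The crux is to show that this configuration forces a \emph{well-oriented} triangle, i.e.\ an $\Om3a$ or $\Om3h$ move; making this orientation bookkeeping rigorous, and checking that it persists no matter how $M$ deploys its $\Om2d$ moves --- in particular when a $\Om2d$ is applied directly to the two anti-parallel strands of the source rather than against a loop --- is the delicate step. Once this case is settled, the mirror reflection from the first paragraph yields the statement for a sequence realizing $\Om2d$, completing the proof.
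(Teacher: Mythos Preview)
Your reduction by mirror symmetry and your treatment of the cases $m_2\in\{\Om2a,\Om2b\}$ are correct and coincide with the paper's argument: the paper tracks the global circle counts $C^{+},C^{-}$ of the oriented smoothing (Remark~\ref{smooth}), while you phrase the same invariant locally as the connectivity of the smoothing at the boundary of the localized disk; the content is identical.

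Where your proposal stops short is exactly where you say it does, the case $m_2=\Om2d$, and the paper supplies the two observations you are missing. First, your worry that $\Om2d$ might be applied ``directly to the two anti-parallel strands of the source'' is unfounded: the moves $\Om2c$ and $\Om2d$ differ precisely by reversing the orientations of \emph{both} strands, so no local region of the source disk of $\Om2c$ matches the source of $\Om2d$; to create such a region one must introduce an $\Om1$ twist on \emph{each} of the two strands, with the twists facing one another. Second --- and this is the step you could not make rigorous --- although $\Om2d$ can in general change the oriented smoothing, when it is applied in this forced configuration (between the outer arcs of the two $\Om1$ loops) a direct diagrammatic check shows that it does \emph{not}: the smoothing, and hence your connectivity invariant, is preserved. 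With this in hand, your own invariant argument from the $\{\Om2a,\Om2b\}$ cases finishes the job unchanged: every move available to $M$ (the $\Om1$ moves, $\Om2d$ in this configuration, and $\Om3b,\dots,\Om3g$) preserves the connectivity of the oriented smoothing at the disk boundary, whereas the move $\Om2c$ being realized does not, forcing $M$ to contain $\Om3a$ or $\Om3h$.
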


\begin{proof} We will consider only the realization of the move $\Om2c$, since the move $\Om2d$ is treated similarly. When realizing the move $\Om2c$, we start with the left-hand side of the move, that is, we start from two parallel oppositely oriented strands; let $D_1$ be the corresponding diagram. In order to obtain the right-hand side diagram $D_2$ of the move $\Om2c$, a different type 2 move needs to be applied to slide a strand under/over the other.  The proof splits into three cases, depending on the type 2 move that $M$ contains.

\textbf{Case 1:} Suppose $M$ contains the move $\Om2a$.

Since the move $\Om2c$ involves oppositely oriented strands inside the localized disk, while the move $\Om2a$ involves similarly oriented strands, in order to be able to perform the move $\Om2a$, we need to change locally the orientation of one of the strands in a small neighborhood of diagram $D_1$. This can be accomplished only via an oriented type 1 move; specifically, the move $\Om1a$ or $\Om1c$ can be used on one of the strands so that the resulting twist faces the other strand of the diagram $D_1$. After the application of an $\Om1$ move, we apply the move $\Om2a$ between the strand without a twist and the outer edge of the twist, just as it was done in the realizations of move $\Om2c$ in Lemma~\ref{2c2d}.  The resulting diagram contains three crossing. Since the right-hand side diagram, $D_2$, of the move $\Om2c$ contains two crossing, the sequence $M$ must use the same $\Om1$ move in opposite direction to eliminate the twist, so that to ensure that the final diagram $D_2$ contains two crossings and that the sequence $M$ preserves the writhe $\omega$, a quantity preserved by the move $\Om2c$ that is to be realized. Since the move $\Om2a$ and a pair of forward and backward $\Om1$ move preserve $C^+$, as well as $C^-$ and $C^+ + C^- \pm \omega$, while the move $\Om2c$ may change these quantities, it follows that the sequence $M$ must contain a type 3 move that may also alter these quantities. Hence, the sequence $M$ must include one of the moves $\Om3a$ or $\Om3h$.

\textbf{Case 2:} Suppose $M$ contains the move $\Om2b$. 

This case is proved in the same way as the case for $\Om2a$, since the move $\Om2b$ involves similarly oriented strands, as $\Om2a$, and preserves $C^+, C^-$ and $C^+ + C^- \pm \omega$. 

\textbf{Case 3:} Suppose $M$ contains the move $\Om2d$. 

 Note that the moves $\Om2c$ and $\Om2d$ differ by reversal of orientation of both strands. Hence, starting with the left-hand side diagram $D_1$ of the move $\Om2c$, we need to change the orientation of both strands in a small neighborhood of diagram $D_1$; this can be accomplished only by applying an oriented $\Om1$ move to each of the strands, and the moves are performed in such a way that the twists face each other. For the diagram $D_1$, this can be accomplished by the application of the move $\Om1a$ or the move $\Om1c$. After doing so, the move $\Om2d$ can be applied to cross the outer edges of the two twists.  An example for such realization of move $\Om2c$ is shown below.
\[
 \includegraphics[scale=0.17]{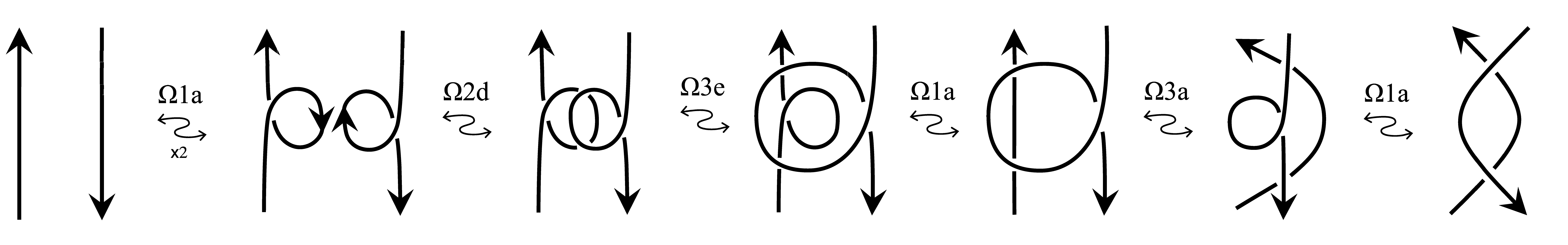} 
 \]
 Although the move $\Om2d$ may change the quantities $C^+, C^-$ and $C^+ + C^- \pm \omega$, when the move is applied to the two twists of the previously performed $\Om1$ moves, the move $\Om2d$ preserves all of these quantities, as seen below.
 \[
 \includegraphics[scale=0.25]{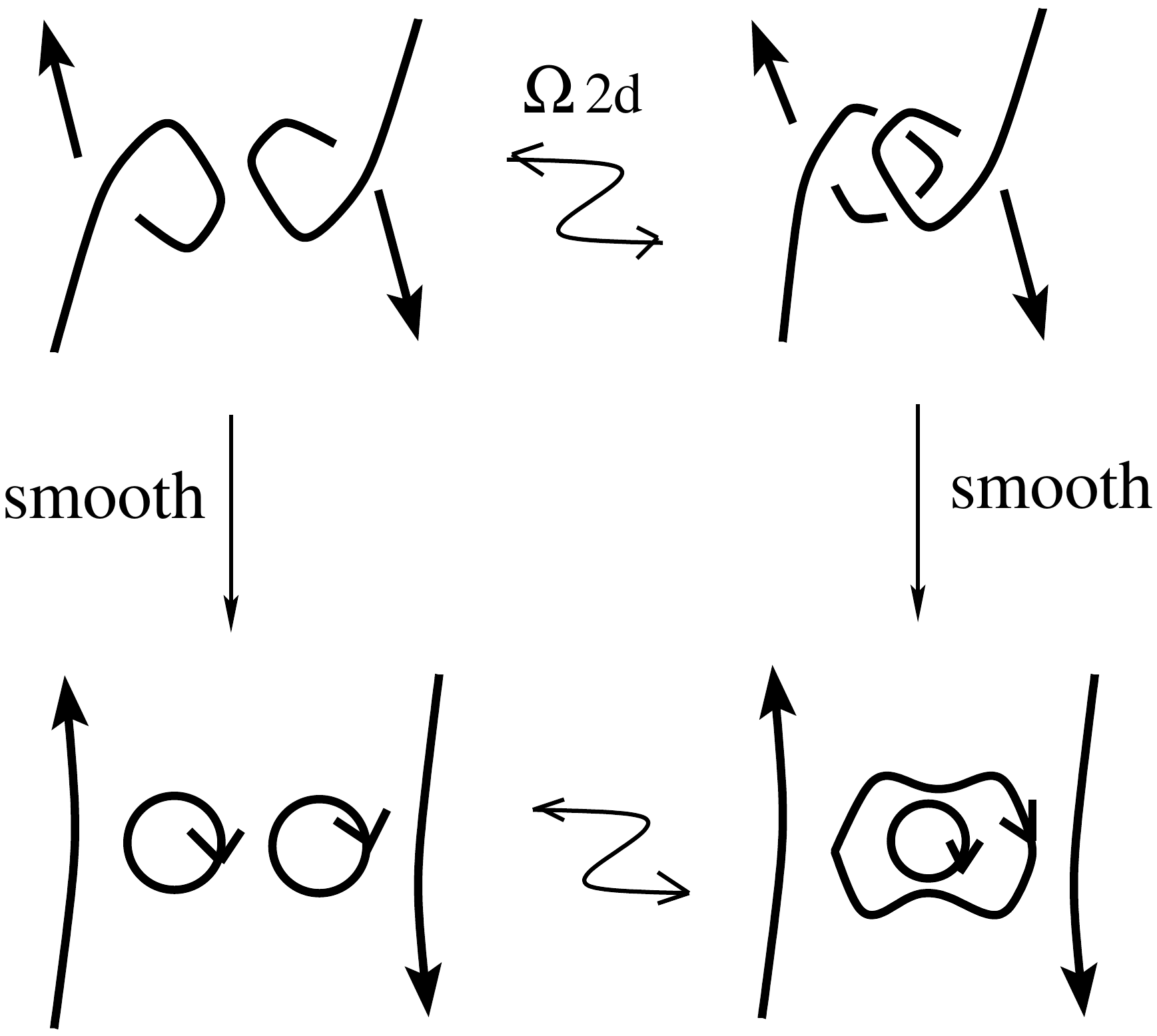} 
 \]

 As in the previous two cases, in order for the sequence $M$ to preserve the writhe $\omega$ and the final diagram to contain two crossings, the sequence $M$ must include the same $\Om1$ moves that have been used, but now the backward direction must be applied. The net change on $C^+, C^-$ and $C^+ + C^- \pm \omega$ of all of the moves present so far in $M$ is zero. Since the move $\Om2c$ that needs to be realized may change these quantities, the sequence $M$ must contain a type 3 move that changes these quantities. Hence, the sequence $M$ must include one of the moves $\Om3a$ or $\Om3h$.
 
 When realizing the move $\Om2d$, the only difference in the proof is that either $\Om1b$ move or $\Om1d$ move can be used in the first step to change the orientation of one of the strands in the left-hand side diagram of the move $\Om2d$, so that another type 2 move present in $M$ can be applied.
\end{proof}

\begin{lemma}\label{2Type3}
Let $M$ be a sequence of oriented Reidemeister moves that realizes an $\Om3$ move. Suppose further that $M$ contains exactly one type 3 move distinct from the move to be realized. Then $M$ contains two distinct $\Om2$ moves, specifically it contains either the pair $(\Om2a, \Om2b)$ or the pair $(\Om2c, \Om2d)$.
\end{lemma}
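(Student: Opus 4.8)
The plan is to argue by contradiction inside the localized disk of the move to be realized, which I denote $\Om3x$. Write $\Om3y$ for the unique type-3 move occurring in $M$ (so $\Om3y\neq\Om3x$) and decompose $M=M_0\cdot\Om3y\cdot M_1$, where $M_0$ and $M_1$ consist of type-1 and type-2 moves only. In the localized disk, the two sides $D_1$ and $D_2$ of $\Om3x$ each restrict to three strands crossing one another pairwise in three crossings, bounding a triangle and containing no bigon, with the six boundary points joined in the same pattern on both sides. Suppose, for contradiction, that $M$ contains neither both of $\Om2a$ and $\Om2b$ nor both of $\Om2c$ and $\Om2d$; equivalently, at most one of $\{\Om2a,\Om2b\}$ and at most one of $\{\Om2c,\Om2d\}$ occurs in $M$. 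I will show that this forces $\Om3y=\Om3x$.

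The first and principal step is to reduce to the case in which $M$ uses no type-2 move at all. The first type-2 move occurring in $M$ creates a bigon $B$ — there is no bigon in $D_1$ to remove — and, since $D_2$ contains no bigon and the three strands eventually return to the configuration of $D_1$, this bigon is later removed by a type-2 move. A disoriented bigon can be removed only by $\Om2a$ or $\Om2b$, and a well-oriented one only by $\Om2c$ or $\Om2d$; under our hypothesis $M$ contains at most one move from each class, so $B$ is removed by the very move that created it. Now, were $\Om3y$ to act on $B$ between its creation and removal — necessarily using two of its arcs as two of the three strands of the type-3 move — a local analysis of sliding the remaining strand across the resulting triangle shows that the cyclic data of $B$ gets reversed (a counter-clockwise bigon becomes clockwise; an ``$\Om2a$-type'' bigon becomes an ``$\Om2b$-type'' one), so that the bigon produced could be removed only by the other move in its class, which is not available — a contradiction. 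Hence $\Om3y$ does not act on $B$, the creating and removing moves form a mutually inverse pair with no net effect, and they may be excised from $M$; iterating, we may assume $M_0$ and $M_1$ consist of type-1 moves only. Carrying out the excision rigorously when several bigons interact with $\Om3y$, and verifying the claimed reversal of the bigon, is the part of the argument requiring the most care; the phenomenon is dual to that of Lemmas~\ref{2a2b3a3h} and~\ref{2c2d2a2b}, where realizing one type-2 move by a different one forces a type-3 move into the sequence.

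The second step finishes the contradiction. An $\Om1$ move only introduces or deletes a small loop bounding an embedded sub-disk met by no other strand; in particular it creates no new crossing between distinct strands and reroutes no strand. Therefore $M_0(D_1)$ contains no triple of pairwise-crossing arcs other than the original triangle of $D_1$ (up to isotopy and deletion of kinks), so $\Om3y$ must be applied to that triangle; and since $M_1$ consists of type-1 moves only — which leave the triangle unchanged while deleting the remaining kinks — the move $\Om3y$ transforms the triangle of $D_1$ into the triangle of $D_2$. Since the local move characterized by these two triangle pictures is exactly $\Om3x$, we conclude $\Om3y=\Om3x$, contradicting $\Om3y\neq\Om3x$.

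Hence $M$ must contain both $\Om2a$ and $\Om2b$, or both $\Om2c$ and $\Om2d$; in either case these are two distinct type-2 moves, which gives the assertion. As a check, this matches every type-3 realization given in Lemmas~\ref{3c3d}, \ref{H3moves} and~\ref{A3moves}, each of which uses either the pair $\Om2c,\Om2d$ or the pair $\Om2a,\Om2b$.
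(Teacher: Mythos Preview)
Your approach is genuinely different from the paper's, which argues directly and geometrically: to apply a different $\Om3$ move one must first create a second triangle, which forces an $\Om2$ move between the sliding strand $\ell$ and a ray emanating from the fixed crossing $c$; after the $\Om3$ move the five-crossing picture has a surplus bigon that must be removed by another $\Om2$ move; and the two bigons involved are \emph{adjacent} bigons on the same pair of strands sharing a crossing, which forces them to be the two members of a pair $(\Om2a,\Om2b)$ or $(\Om2c,\Om2d)$. Your contradiction/reduction scheme is an interesting alternative, but as written it has a real gap at its pivot.

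The problematic step is the ``reversal'' claim: you assert that if $\Om3y$ uses the two arcs of the bigon $B$ as two of its three strands, then after $\Om3y$ the bigon changes type (clockwise $\leftrightarrow$ counter-clockwise, $\Om2a$-type $\leftrightarrow$ $\Om2b$-type). This is not correct. An $\Om3$ move neither creates nor destroys crossings and does not alter the orientations of the strands; the two crossings of $B$ persist with the same signs, and the arcs joining them keep the same orientations, so the bigon remains removable (once cleared of the third strand) by the \emph{same} $\Om2$ move that created it. What actually changes is \emph{which} bigon is in position to be removed: after $\Om3y$ the bigon that must be undone to reach $D_2$ is the \emph{adjacent} bigon on the same pair of strands, and it is that adjacency---not a transformation of $B$ itself---that forces the second $\Om2$ move to be the partner of the first. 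Your excision argument hinges on the reversal claim, so it does not go through; and you yourself flag that the interaction of several bigons with $\Om3y$ ``is the part requiring the most care,'' which here is precisely where the argument breaks. A secondary issue is that the decomposition $M=M_0\cdot\Om3y\cdot M_1$ presumes a single application of $\Om3y$; if the hypothesis is read as ``one type of $\Om3$ move,'' multiple applications must be handled. To repair the proof, replace the reversal claim by the adjacency observation above---that is closer to the paper's mechanism and actually yields the desired contradiction.
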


\begin{proof}
A Reidemeister move of type 3 slides a strand over, under, or through a crossing. So in order to realize an oriented $\Om3$ move through a sequence of oriented Reidemeister moves, a different $\Om3$ move must be applied to slide the strand. Consider an arbitrary $\Om3x$ move of type 3, where $x \in \{a, b, \dots, h \}$, and denote the two diagrams in $\Om3x$ by $D_1$ and $D_2$. The move $\Om3x$ slides a strand $\ell$ in $D_1$ under/over a crossing $c$, as described in Section~\ref{sec:ReidMoves} where we explained how we differentiate the eight $\Om3$ moves.  Note that both diagrams contain three crossings, where the crossings in both diagrams are either all positive, or all negative, or two are positive (or negative) and one is negative (or positive). Moreover, diagrams $D_1$ and $D_2$ divide the plane into six unbounded regions and one bounded region which is a triangle. (When referring to the six unbounded regions, we are considering the three strands in $D_1$ and $D_2$ as infinite rays, although the move $\Om3x$ is applied in a localized disk).

In order to realize the move $\Om3x$ by means of applying a different $\Om3$ move (some $\Om3$ move that is in a generating set or that has been realized already by a generating set), we need to transform the diagram $D_1$ to form another triangle that will allow the application of another $\Om3$ move to slide the strand $\ell$ under/over the crossing $c$. Any $\Om3$ move involves a triangle and in order to transform the diagram $D_1$ so that it contains another triangle, we need to increase the number of crossings in $D_1$. A type 1 move does not create a new triangle; thus a type 2 move applied to two of the strands in $D_1$ must be used, and the move needs to involve the strand $\ell$ and another strand in $D_1$. In addition, in order for the $\Om2$ move to give rise to a second triangle in the resulting diagram, the type 2 move needs to be performed in one of the unbounded regions of the diagram $D_1$ that contains three arcs and two crossings of the diagram $D_1$ (so that the move creates a triangle and not a bigon) and must be between the strand $\ell$ and one of the infinite rays emanating from the crossing $c$ (so that the strand $\ell$ can slide under/over $c$). The resulting diagram has two additional crossings, two triangles (the original triangle and a new triangle) and one bigon as bounded regions. The newly formed triangle can be now used to apply an $\Om3$ move different from $\Om3x$, to slide the strand $\ell$ under/over crossing $c$.  After the application of the move $\Om3$, the resulting diagram, call it $D'$, has five crossings and three bounded regions (two triangles and one bigon). But the final diagram of the sequence of moves in $M$ is $D_2$, which has three crossings and only one bounded region (a triangle). Thus another $\Om2$ move needs to be applied to diagram $D'$ to eliminate the extra two crossings and the bigon (along with the adjacent triangle).

The first and second type 2 moves involve adjacent bigons formed by the same pair of strands in the original diagram $D_1$ and share a crossing in $D'$, where one of the strands is $\ell$. Hence these oriented $\Om2$ moves are of different type. Moreover, if the strands involved in the type 2 moves are similarly oriented, then the two Reidemeister moves are $(\Om2a, \Om2b)$, and if the strands are oppositely generated, then these type 2 moves are $(\Om2c, \Om2d)$. Several examples of realizations of oriented $\Om3$ moves are given in Lemmas~\ref{3c3d},~\ref{H3moves} and~\ref{A3moves}.
\end{proof}

A specific case of this lemma follows, which will help to show that the only oriented $\Om3$ moves in a minimal (4-element) generating set of all oriented Reidemeister moves are $\Om3a$ and $\Om3h$.

\begin{corollary}\label{3a3h2c2d}
Let $M$ be a sequence of oriented Reidemeister moves that realizes the move $\Om3a$ or $\Om3h$, and suppose that $M$ contains exactly one type 3 move distinct from the move to be realized. Then $M$ contains the pair $(\Om2c, \Om2d)$. 
\end{corollary}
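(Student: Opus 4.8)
The plan is to start from Lemma~\ref{2Type3}: since $M$ realizes an $\Om3$ move and contains exactly one type 3 move besides it, that lemma already tells us $M$ contains two distinct $\Om2$ moves, forming either $(\Om2a, \Om2b)$ or $(\Om2c, \Om2d)$, and the whole task is to exclude the pair $(\Om2a, \Om2b)$. Following the analysis in the proof of Lemma~\ref{2Type3}, I would first reduce to the realization it produces: $M$ consists of one $\Om2$ move opening an auxiliary triangle alongside the triangle of the move being realized, the single auxiliary $\Om3$ move $\Om3x$ (with $x$ not the label of the move being realized), and one $\Om2$ move closing the diagram back to the right-hand side of that move. Any $\Om1$ moves in $M$ create a loop that, since the diagrams of an $\Om3$ move contain no free loops and the two $\Om2$ moves and the one $\Om3$ move are already committed to the triangle bookkeeping, can only be undone by the inverse $\Om1$ move; such forward/backward pairs have trivial net effect and may be discarded, so one may assume $M$ is the three-move sequence above.

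Next I would assume for contradiction that the pair is $(\Om2a, \Om2b)$, and take $M$ to realize $\Om3a$ (the case of $\Om3h$ being identical). The $\Om2$ move that opens the auxiliary triangle is $\Om2a$ or $\Om2b$, hence is applied between two \emph{similarly oriented} strands, and I claim the triangle it creates is therefore disoriented. Granting this, since $\Om3a$ and $\Om3h$ are the only type 3 moves supported on a well-oriented triangle, the auxiliary move $\Om3x$ must be one of $\Om3b, \Om3c, \ldots, \Om3g$. Now I invoke Remark~\ref{smooth}: the moves $\Om2a$ and $\Om2b$ preserve $C^+$ and $C^-$ (hence also $C^+ + C^- \pm \omega$), and so do the moves $\Om3b$ through $\Om3g$; therefore the sequence $M$ preserves $C^+$ and $C^-$. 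But $M$ realizes $\Om3a$, and smoothing the two sides of $\Om3a$ in Figure~\ref{Type3Moves} according to orientation creates or destroys a small oriented loop, so $\Om3a$ changes $C^+ + C^-$ --- precisely the property recorded in Remark~\ref{smooth} that distinguishes $\Om3a$ and $\Om3h$ from the moves $\Om3b$ through $\Om3g$. This contradiction forces the pair to be $(\Om2c, \Om2d)$.

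The step I expect to be the main obstacle is the claim that opening a triangle with an $\Om2a$ or $\Om2b$ move yields a disoriented triangle --- equivalently, that the auxiliary move cannot be $\Om3h$ when $M$ realizes $\Om3a$ with the pair $(\Om2a, \Om2b)$. This cannot be settled by smoothing counts alone, because $\Om3h$ is obtained from $\Om3a$ by switching all three crossings and hence has the same oriented smoothing and the same effect on $C^+, C^-$ and $C^+ + C^- \pm \omega$. So the claim must be checked diagrammatically, by following how a type 2 move performed between parallel strands bounds the newly created triangle; this is in the same spirit as the triangle-orientation remark used at the end of the proof of Lemma~\ref{2a2b3a3h}. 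Once that is in hand, the remainder is the short invariance computation above.
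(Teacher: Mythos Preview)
Your proposal is correct and takes essentially the same approach as the paper: invoke Lemma~\ref{2Type3}, argue that the auxiliary $\Om3$ move must be one of $\Om3b$--$\Om3g$, and then use the smoothing invariants of Remark~\ref{smooth} to rule out the pair $(\Om2a,\Om2b)$. For the step you flag as the main obstacle, the paper settles it by the triangle--adjacency geometry already set up in the proof of Lemma~\ref{2Type3}: the auxiliary triangle shares one side with the well-oriented triangle of $\Om3a$ (or $\Om3h$) and another side of each lies on the same strand, which forces the new triangle to be disoriented irrespective of which $\Om2$ pair is used; the contradiction via $C^+ + C^- \pm \omega$ then finishes exactly as you describe.
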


\begin{proof}
This follows as a direct application of Lemma~\ref{2Type3} and Remark~\ref{smooth}. When the type 3 move to be realized is either $\Om3a$ or $\Om3h$, the triangle in the diagram $D_1$ in the proof of Lemma~\ref{2Type3} is well-oriented. The newly created triangle in the sequence of moves $M$ from Lemma~\ref{2Type3} is adjacent to the triangle in $D_1$, as they share a side and another side of both triangles is part of the same strand in the diagram. Hence, the newly created triangle is disoriented. Thus, the $\Om3$ move corresponding to the newly created triangle is one of the moves $\Om3b$ through $\Om3g$ (a type 3 move distinct from $\Om3h$ and $\Om3a$) when the move to be realized is either $\Om3a$ or $\Om3h$.

By Remark~\ref{smooth}, the moves $\Om3b$ through $\Om3g$ preserve $C^+ + C^- \pm 1$, and the same applies to the pair $(\Om2a, \Om2b)$. Since the moves $\Om3a$ and $\Om3h$ may change $C^+ + C^- \pm 1$, we conclude that the sequence $M$ of moves that realizes $\Om3a$ or $\Om3h$ must use a pair of type 2 moves that may also change this quantity. That is, the pair of type 2 moves used in a realization of the move $\Om3a$ or $\Om3h$ is $(\Om2c, \Om2d)$. 
\end{proof}

We have the lemmas necessary to prove that the moves $\Om2c$ and $\Om2d$ do not exist in a minimal (4-element) generating set, and that either $\Om3a$ move or $\Om3h$ move must be in a minimal generating set of oriented Reidemeister moves. 

\begin{theorem} \label{3a3h}
A minimal generating set of oriented Reidemeister moves contains either the move $\Om3a$ or the move $\Om3h$.
\end{theorem}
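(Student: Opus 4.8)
The plan is to argue by contradiction, leveraging the rigid composition of a minimal generating set. First I would record that by Lemma~\ref{Type1Ex} and Remark~\ref{minimality}, a minimal generating set $S$ consists of exactly two $\Om1$ moves (forming a generating pair), exactly one $\Om2$ move, and exactly one $\Om3$ move. Suppose, for contradiction, that the unique type $3$ move of $S$ is $\Om3x$ with $x\in\{b,c,d,e,f,g\}$, so that $S$ contains neither $\Om3a$ nor $\Om3h$. I would then distinguish two cases according to the unique type $2$ move of $S$.

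If that $\Om2$ move is $\Om2a$ or $\Om2b$, then $S$ is a minimal generating set containing one of $\Om2a,\Om2b$, so the corollary to Theorem~\ref{NoGenOmega3} forces $S$ to contain $\Om3a$ or $\Om3h$ --- contradicting the choice of $\Om3x$. (Equivalently, the ``moreover'' part of that corollary says directly that a generating set with a single $\Om3$ move that is neither $\Om3a$ nor $\Om3h$ must contain one of $\Om2c,\Om2d$.) So we may assume the unique $\Om2$ move of $S$ is $\Om2c$ or $\Om2d$. Now I would use that $S$, being a generating set, in particular generates $\Om3a$: fix a realization $M$ of $\Om3a$ by planar isotopies and moves of $S$. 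Since $\Om3x$ is the only type $3$ move available in $S$, every type $3$ move occurring in $M$ equals $\Om3x$; and as recalled in the proof of Lemma~\ref{2Type3}, no realization of a type $3$ move can avoid using a type $3$ move (a strand cannot be slid past a crossing using only type $1$ and type $2$ moves), so $\Om3x$ does occur in $M$. Because $\Om3x\neq\Om3a$, the sequence $M$ contains \emph{exactly} one type $3$ move distinct from the move being realized, so Corollary~\ref{3a3h2c2d} applies and forces $M$ to contain both $\Om2c$ and $\Om2d$. But $S$ contains only one of these two moves, and $M$ is built entirely from moves of $S$; this is the required contradiction. Hence the unique $\Om3$ move of any minimal generating set lies in $\{\Om3a,\Om3h\}$.

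I expect the one place that needs care is the verification, in the second case, that the hypothesis of Corollary~\ref{3a3h2c2d} is actually satisfied --- namely that the chosen realization $M$ genuinely uses a type $3$ move --- since otherwise the corollary cannot be invoked. This is exactly the combinatorial fact underlying Lemma~\ref{2Type3}, so no new argument is needed; the remainder is just bookkeeping of which moves can appear in a four-element generating set. One could equally run the second case with a realization of $\Om3h$ in place of $\Om3a$, as Corollary~\ref{3a3h2c2d} treats both moves on the same footing.
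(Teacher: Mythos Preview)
Your argument is correct and reaches the same conclusion as the paper, but the organization differs. The paper does not split into cases on the unique $\Om2$ move; it argues uniformly that any realization of $\Om3a$ (or $\Om3h$) from $Y$ must involve both $\Om2c$ and $\Om2d$ (Corollary~\ref{3a3h2c2d}), so at least one of these is missing from $Y$ and must itself be realized, and then Lemma~\ref{2c2d2a2b} shows that realizing the missing $\Om2c$ or $\Om2d$ forces $\Om3a$ or $\Om3h$ to appear in the sequence---impossible since neither lies in $Y$. Your route replaces this circular-dependency step in two ways: in Case~1 you invoke the corollary to Theorem~\ref{NoGenOmega3}, which already disposes of the $\Om2a/\Om2b$ situation outright; in Case~2 you apply Corollary~\ref{3a3h2c2d} directly to a realization $M$ built from moves of $S$ and read off the contradiction that $M$ must contain both $\Om2c$ and $\Om2d$ while $S$ supplies only one. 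This avoids Lemma~\ref{2c2d2a2b} entirely. The trade-off is that your proof cites one more earlier result (the corollary after Theorem~\ref{NoGenOmega3}) but is arguably more direct in Case~2; the paper's version is shorter on the page and handles all four $\Om2$ possibilities with a single sentence. Both rely on the same reading of Corollary~\ref{3a3h2c2d}, namely that ``exactly one type~3 move'' refers to one \emph{type} of $\Om3$ move appearing in $M$, which is how the paper itself uses the corollary.
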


\begin{proof}
We provide a proof by contradiction. Let $Y$ be a minimal generating set of oriented Reidemeister moves. Hence $Y$ contain four moves. Moreover, by Remark~\ref{minimality}, $Y$ contains exactly two $\Om1$ moves, one $\Om2$ move, and one $\Om3$ move. Suppose that the type 3 move that $Y$ contains is neither the move $\Om3a$ nor the move $\Om3h$.
From Corollary~\ref{3a3h2c2d} we know that the realization of the move $\Om3a$ or $\Om3h$ relies on both moves $\Om2c$ and $\Om2d$. As $Y$ contains exactly one type 2 move, $Y$ needs to generate at least one of the moves $\Om2c$ or $\Om2d$. But by Lemma~\ref{2c2d2a2b}, we know that the realization of the move  $\Om2c$ or $\Om2d$ must include move $\Om3a$ or move $\Om3h$ in the sequence. Thus, $Y$ cannot generate all of the oriented Reidemeister moves, which is a contradiction.

Therefore, any minimal generating set of oriented Reidemeister moves must contain either the move $\Om3a$ or the move $\Om3h$. 
\end{proof}

\begin{theorem} \label{2c2dEx}
A minimal generating set of oriented Reidemeister moves contains either the move $\Om2a$ or the move $\Om2b$.
\end{theorem}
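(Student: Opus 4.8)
The plan is to argue by contradiction, in the style of the proof of Theorem~\ref{3a3h}. Suppose that $Y$ is a minimal generating set of oriented Reidemeister moves whose unique $\Om2$ move is $\Om2c$ or $\Om2d$ (rather than $\Om2a$ or $\Om2b$). By Remark~\ref{minimality}, $Y$ contains exactly two $\Om1$ moves, one $\Om2$ move, and one $\Om3$ move, and by Theorem~\ref{3a3h} the unique $\Om3$ move of $Y$ is $\Om3a$ or $\Om3h$. So, under our assumption, $Y$ consists of two $\Om1$ moves, one of the moves $\Om2c$ or $\Om2d$, and one of the moves $\Om3a$ or $\Om3h$; note that we will not need to split into the four subcases, since Lemma~\ref{2a2b3a3h} treats $\Om2c,\Om2d$ and excludes both $\Om3a$ and $\Om3h$ simultaneously.

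Next I would use that $Y$, being a generating set, realizes all sixteen oriented Reidemeister moves, in particular $\Om2a$; so there is a sequence $M$ of planar isotopies and moves from $Y$ that realizes $\Om2a$ inside its localized disk. Since every move appearing in $M$ belongs to $Y$, the only type 2 move occurring in $M$ is the unique $\Om2$ move of $Y$, which is $\Om2c$ or $\Om2d$, and the only type 3 move occurring in $M$ is the unique $\Om3$ move of $Y$, which is $\Om3a$ or $\Om3h$. Passing to a realization of $\Om2a$ in which this type 2 move is applied exactly once, $M$ contains exactly one type 2 move and it is $\Om2c$ or $\Om2d$, so Lemma~\ref{2a2b3a3h} applies and forces $M$ to include an oriented $\Om3$ move distinct from both $\Om3a$ and $\Om3h$ — contradicting the fact that the only type 3 move occurring in $M$ is $\Om3a$ or $\Om3h$. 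The same reasoning with $\Om2b$ in place of $\Om2a$ gives the same contradiction, so no such $Y$ exists and every minimal generating set contains $\Om2a$ or $\Om2b$.

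The step I expect to require the most care is the passage to a realization $M$ of $\Om2a$ (or $\Om2b$) in which the unique $\Om2$ move of $Y$ is applied exactly once, so that Lemma~\ref{2a2b3a3h} may be invoked verbatim. One option is to extract from an arbitrary realization of $\Om2a$ a core sub-realization with a single type 2 application; another is to observe, exactly as in the proof of Lemma~\ref{2a2b3a3h}, that the conclusion is really a feature of every realization of $\Om2a$ or $\Om2b$ whose only type 2 moves are $\Om2c$ and/or $\Om2d$: such a realization must first reconcile the orientation mismatch between the parallel strands of $\Om2a$ and the antiparallel strands of $\Om2c,\Om2d$ using $\Om1$ moves, and then, since the resulting net change of $C^+$, $C^-$, and $C^+ + C^- \pm \omega$ is zero while $\Om2a$ preserves all of these quantities, it must clear the introduced twist with a type 3 move that also preserves them, i.e.\ a disoriented-triangle move, one of $\Om3b$ through $\Om3g$. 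Once this observation is in place, the contradiction is immediate, precisely as in Theorem~\ref{3a3h}.
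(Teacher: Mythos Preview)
Your argument is correct and follows the same contradiction template as the paper: assume $Y$ contains only $\Om2c$ or $\Om2d$, invoke Theorem~\ref{3a3h} to pin down the $\Om3$ move as $\Om3a$ or $\Om3h$, and then use Lemma~\ref{2a2b3a3h} to force a disoriented-triangle $\Om3$ move into any realization of $\Om2a$ (or $\Om2b$), which $Y$ cannot supply.

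The one difference worth noting is that your route is shorter than the paper's. The paper phrases the obstruction as a circular dependency, invoking Remark~\ref{neat} (i.e.\ Lemma~\ref{2a2b3a3h}) together with Lemma~\ref{2c2d2a2b} and Lemma~\ref{2Type3}: one needs a second $\Om3$ move before $\Om2a/\Om2b$, a second $\Om3$ move needs the pair $(\Om2c,\Om2d)$ or $(\Om2a,\Om2b)$, and so on. You instead observe that Lemma~\ref{2a2b3a3h} already yields the contradiction outright, since every move in $M$ comes from $Y$ and the only $\Om3$ move in $Y$ is $\Om3a$ or $\Om3h$. This bypasses Lemmas~\ref{2c2d2a2b} and~\ref{2Type3} entirely. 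Your self-flagged issue about ``applied exactly once'' is a fair reading of the hypothesis of Lemma~\ref{2a2b3a3h} as stated, and your second option---noting that the invariant-based argument in the proof of Lemma~\ref{2a2b3a3h} actually applies to any realization whose only type 2 moves are $\Om2c$ and/or $\Om2d$, regardless of how many times they are used---is the correct fix and is implicitly what both you and the paper rely on.
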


\begin{proof} Let $Y$ be a minimal generating set of oriented Reidemeister moves, and suppose by contradiction that $\Om2a \not \in Y$ and $\Om2b \not \in Y$. Since $Y$ is a minimal generating set, $Y$ contains exactly one oriented $\Om2$ move, and thus we must either have $\Om2c \in Y$ or $\Om2d \in Y$. Moreover, by Theorem~\ref{3a3h}, $Y$ contains either the move $\Om3a$ or the move $\Om3h$.

 By Remark~\ref{neat}, before realizing the moves $\Om2a$ and $\Om2b$, the set $Y$ must be shown that it generates a type 3 move that is not in $Y$. Moreover, by Lemma~\ref{2c2d2a2b}, any realization of the move $\Om2d$ or $\Om2c$ (whichever is not in the set $Y$) makes use of one of the moves $\Om3a$ or $\Om3h$. Finally, by Lemma~\ref{2Type3}, the realization of an oriented $\Om3$ move requires either the pair $(\Om2a, \Om2b)$ or the pair $(\Om2c, \Om2d )$. Hence, $Y$ is not a generating set, and we have reached a contradiction. Therefore, none of the moves $\Om2c$ and  $\Om2d$ belong to a minimal generating set of oriented Reidemeister moves. 
\end{proof}

We have shown that a minimal generating set of oriented Reidemeister moves contains two $\Om1$ moves, except for the pairs $(\Om1a, \Om1d)$ and $(\Om1b, \Om1c)$, contains either $\Om2a$ or $\Om2b$, and either $\Om3a$ or $\Om3h$. These restrictions, however, suggest the existence of 16 minimal generating sets, but there are only 12 sets contained within the collection $\mathcal{A}\cup\mathcal{H}$. In the following lemma, we prove that the remaining four sets do not generate all oriented Reidemeister moves.

\begin{lemma}\label{last}
Each set of moves $\{\Om1a, \Om1b, \Om3h\}$ and $\{\Om1c, \Om1d, \Om3a\}$, taken with either move $\Om2a$ or move $\Om2b$, does not generate all oriented Reidemeister moves.
\end{lemma}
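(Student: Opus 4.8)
The plan is to exhibit, for each of the four sets, a single oriented Reidemeister move that it fails to generate; since a generating set must realize all sixteen oriented Reidemeister moves, this suffices. Concretely, I will show that the two sets of the form $\{\Om1a,\Om1b,\Om3h\}$ together with $\Om2a$ or $\Om2b$ do not generate $\Om3a$, and that the two sets of the form $\{\Om1c,\Om1d,\Om3a\}$ together with $\Om2a$ or $\Om2b$ do not generate $\Om3h$. The argument is the same in all four cases, so I will carry it out only for $S=\{\Om1a,\Om1b,\Om2a,\Om3h\}$ and the move $\Om3a$.

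Suppose toward a contradiction that $S$ generates $\Om3a$. Then there is a sequence $M$ of moves taken from $S$ (and their inverses) that realizes $\Om3a$ inside a localized disk. Any realization of an $\Om3$ move must itself use at least one type-3 move (this is the opening observation in the proof of Lemma~\ref{2Type3}), and the only type-3 move belonging to $S$ is $\Om3h$; hence $M$ contains exactly one type-3 move distinct from the move $\Om3a$ being realized, namely $\Om3h$. Corollary~\ref{3a3h2c2d} then forces $M$ to contain both the move $\Om2c$ and the move $\Om2d$. But the only type-2 move belonging to $S$ is $\Om2a$, so every type-2 move occurring in $M$ is $\Om2a$ --- a contradiction. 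Hence $S$ does not generate $\Om3a$, so $S$ is not a generating set. For $\{\Om1a,\Om1b,\Om2b,\Om3h\}$ one repeats this verbatim with $\Om2b$ in place of $\Om2a$; for $\{\Om1c,\Om1d,\Om2a,\Om3a\}$ and $\{\Om1c,\Om1d,\Om2b,\Om3a\}$ one repeats it with the roles of $\Om3a$ and $\Om3h$ interchanged (showing instead that the set cannot generate $\Om3h$), which is legitimate since the hypotheses of Corollary~\ref{3a3h2c2d} treat $\Om3a$ and $\Om3h$ symmetrically. This is the same obstruction already exploited in the proof of Theorem~\ref{3a3h}. (Alternatively, the $\Om3a$-containing cases follow from the $\Om3h$-containing ones by passing to mirror images, under which $\Om1a\leftrightarrow\Om1c$, $\Om1b\leftrightarrow\Om1d$, $\Om2a\leftrightarrow\Om2b$, and $\Om3a\leftrightarrow\Om3h$.)

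Two points deserve care. First, Corollary~\ref{3a3h2c2d} must be invoked even when $M$ uses $\Om3h$ several times: the hypothesis to verify is only that $M$ contains exactly one type-3 move distinct from $\Om3a$ --- namely $\Om3h$, irrespective of its multiplicity in $M$ --- which is precisely how that corollary (and Lemma~\ref{2Type3}) is applied elsewhere in Section~\ref{sec:NonGenSets}. Second, it is essential that the argument targets $\Om3a$ rather than, say, $\Om2c$: since $\Om3h\in S$, the results of Section~\ref{sec:NonGenSets} do not by themselves obstruct a realization of $\Om2c$ that uses $\Om3h$, whereas Corollary~\ref{3a3h2c2d} makes any realization of $\Om3a$ consume both $\Om2c$ and $\Om2d$, neither of which $S$ contains. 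Choosing the right target move is therefore the crux; once that is done, no new diagrammatic computation is needed.
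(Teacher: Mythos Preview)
Your argument is correct at the paper's level of rigor, but it takes a genuinely different route from the paper's own proof. The paper attacks the moves $\Om2c$ and $\Om2d$: it revisits the case analysis inside Lemma~\ref{2c2d2a2b} to pin down exactly which $\Om1$ move must initiate a realization of $\Om2c$ (respectively $\Om2d$), and then checks that the well-oriented triangle this creates forces the type~3 step to be $\Om3a$ when the $\Om1$ move has a positive crossing and $\Om3h$ when it has a negative crossing. This yields the incompatibilities $\{\Om1a,\Om1b\}\not\leftrightarrow\Om3h$ and $\{\Om1c,\Om1d\}\not\leftrightarrow\Om3a$ for generating $\Om2c,\Om2d$, and hence (via Corollary~\ref{3a3h2c2d}) the sets fail. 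Your approach bypasses all of that crossing-sign bookkeeping: you target $\Om3a$ (or $\Om3h$) directly and invoke Corollary~\ref{3a3h2c2d} once, exactly as in the proof of Theorem~\ref{3a3h}. This is shorter and avoids the diagrammatic case analysis, at the cost of leaning entirely on the somewhat informal Lemma~\ref{2Type3}/Corollary~\ref{3a3h2c2d}; the paper's proof, by contrast, re-derives the obstruction at the level of concrete $\Om1$--$\Om3$ interactions, which makes the failure mechanism (the mismatch between the $\Om1$ pair and the sign profile of the available $\Om3$ move) more transparent. One small remark: the proof of Corollary~\ref{3a3h2c2d} actually shows that the single type~3 variant used in such a realization must be one of $\Om3b$--$\Om3g$, so in your setting (where the only available variant is $\Om3h$) the hypothesis is satisfied but the realization is in fact impossible outright---this only strengthens your conclusion. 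Your mirror-image remark for the $\{\Om1c,\Om1d,\Om3a\}$ cases is also valid and is a tidy way to halve the work.
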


\begin{proof}
We show that the sets $\{\Om1a, \Om1b, \Om3h\}$ and $\{\Om1c, \Om1d, \Om3a\}$, taken with either $\Om2a$ or $\Om2b$, do not generate the moves $\Om2c$ and $\Om2d$. We prove this by heavily relying on Lemma~\ref{2c2d2a2b} and its proof, together with Corollary~\ref{3a3h2c2d}. By Corollary~\ref{3a3h2c2d}, in order for the given sets to realize the move $\Om3a$ or $\Om3h$ that is not present in the set, we need to have at our disposal the pair $(\Om2c, \Om2d)$. In addition, by Lemma~\ref{2c2d2a2b}, in order to generate any of the moves $\Om2c$ or $\Om2d$, one of the moves $\Om3a$ or $\Om3h$ is needed.

As seen in the proof of Lemma~\ref{2c2d2a2b}, when using either the move $\Om2a$ or move $\Om2b$ to generate the move $\Om2c$ (these are Cases 1 and 2 in the proof of  Lemma~\ref{2c2d2a2b}), we need to apply a type 1 move that introduces a right-handed twist in either of the two strands. Hence, either the move $\Om1a$ or the move $\Om1c$ must be used. The move $\Om1a$ involves a positive crossing, while the move $\Om1c$ involves a negative crossing. One of the subsequent steps in a realization of the move $\Om2c$ is to cross the strands with either $\Om2a$ or $\Om2b$ move. This introduces two new crossings, one positive and one negative; the resulting diagram contains now three crossings. Continuing with the realization of the move $\Om2c$, either move $\Om3a$ or move $\Om3h$ is needed to move the strand over/under the crossing introduced by the type 1 move, as explained in the proof of Lemma~\ref{2c2d2a2b}.  Below are examples of resulting diagrams after the application of the type 3 move for the case of an initial $\Om1a$ move (left) and, respectively, $\Om1c$ move (right).
\[
 \includegraphics[height=0.7in]{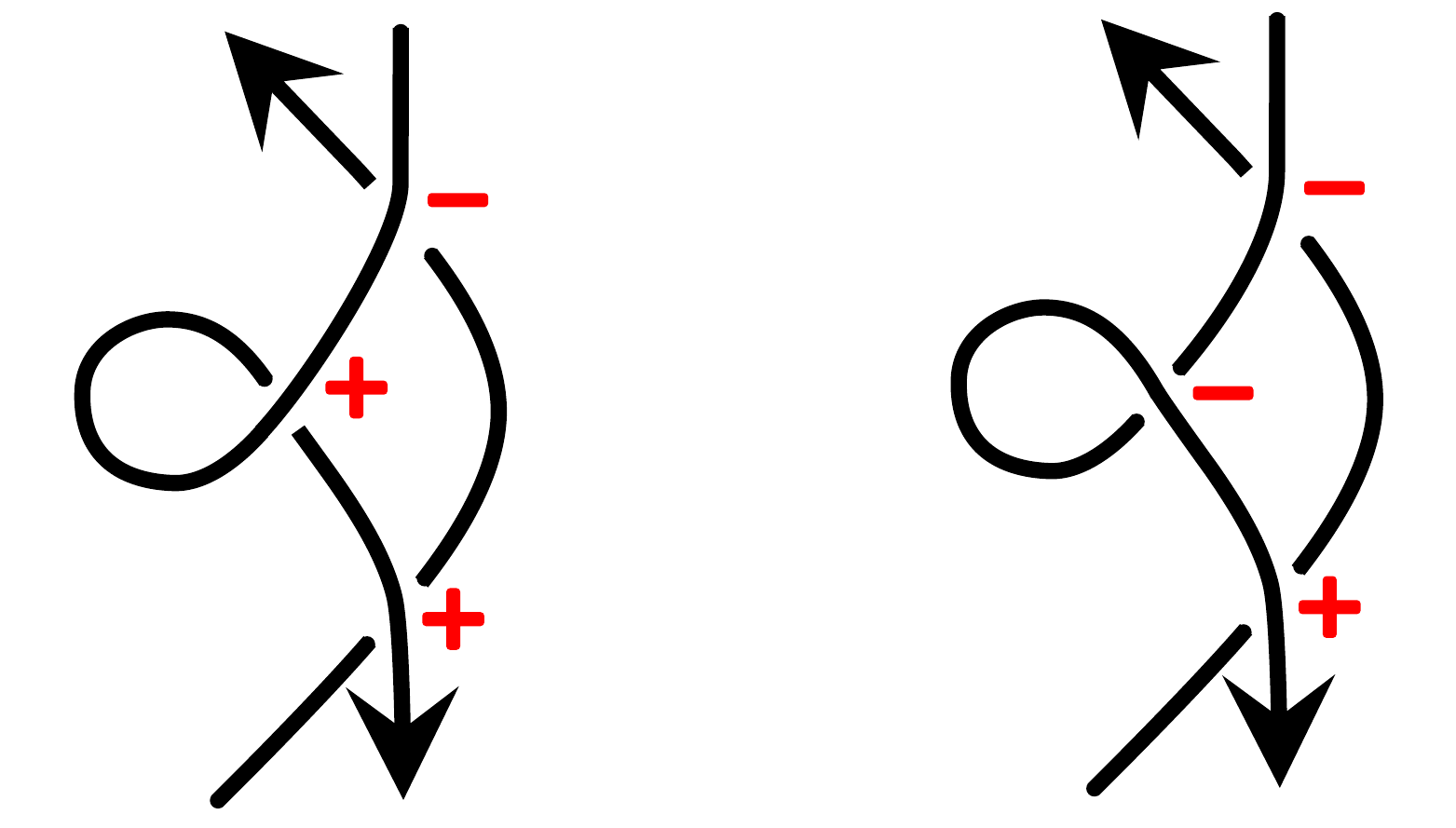} 
 \]
The move $\Om3a$ involves two positive crossings and one negative crossing, while move $\Om3h$ involves two negative crossings and one positive crossing. Therefore, in order to generate the move $\Om2c$ by using either $\Om2a$ or $\Om2b$, one of the following will apply:
\begin{itemize}
\item If the move $\Om1a$ is used, then the type 3 move used is $\Om3a$.
\item If the move $\Om1c$ is used, then the type 3 move used is $\Om3h$.
\end{itemize}

By the proof of Lemma~\ref{2c2d2a2b}, when realizing the move $\Om2d$ through the use of one of $\Om2a$ or $\Om2b$ move, we need to apply a type 1 move that introduces a left-handed twist in either of the two strands. Therefore, either the move $\Om1b$ or the move $\Om1d$ must be used. Note that the move $\Om1b$ introduces a positive crossing while the move $\Om1c$ introduces a negative crossing. Then, similar to the case above for the realization of the move $\Om2c$, we have that in order to realize the move $\Om2d$ by using either $\Om2a$ or $\Om2b$, one of the following will apply:
\begin{itemize}
\item If the move $\Om1b$ is used, then the type 3 move used is $\Om3a$.
\item If the move $\Om1d$ is used, then the type 3 move used is $\Om3h$.
\end{itemize}

We note also that the realization of an $\Om1$ move that is not at our disposal in the set requires either $\Om2c$ move or $\Om2d$ move (since the realization of a type 1 move requires a combination of at least two moves, where one of them is a type 1 move and another is a type 2 move involving oppositely oriented strands due to the fact that the type 1 move to be realized involves just one strand in a diagram).

The above discussions combined with the statement of Corollary~\ref{3a3h2c2d}, imply that the set of moves $\{\Om1a, \Om1b, \Om3h\}$ and $\{\Om1c, \Om1d, \Om3a\}$, taken with $\Om2a$ or $\Om2b$, do not generate the moves $\Om2c$ and $\Om2d$, and thus they do not generate all oriented Reidemeister moves.
\end{proof}
 
 The results of this section prove that the 12 sets of moves in the collections $\mathcal{A} \cup \mathcal{H}$ are all the minimal generating sets of oriented Reidemeister moves for oriented knot diagrams. We formally state this result with the theorem below.

\begin{theorem}\label{P1Converse}
If $X$ is a minimal (4-element) generating set of oriented Reidemeister moves for knot diagrams, then $X \in \mathcal{A}\cup\mathcal{H}$, where:
\begin{align*}
\mathcal{A} = \{&\{\Om1a, \Om1c, \Om2a, \Om3a\}, \{\Om1a, \Om1c, \Om2b, \Om3a\}, \{\Om1b, \Om1d, \Om2a, \Om3a\},\\
 &\{\Om1b, \Om1d, \Om2b, \Om3a\}, \{\Om1a, \Om1b, \Om2a, \Om3a\}, \{\Om1a, \Om1b, \Om2b, \Om3a\}\}\\
\mathcal{H} = \{&\{\Om1a, \Om1c, \Om2a, \Om3h\}, \{\Om1a, \Om1c, \Om2b, \Om3h\}, \{\Om1b, \Om1d, \Om2a, \Om3h\},\\
 &\{\Om1b, \Om1d, \Om2b, \Om3h\}, \{\Om1c, \Om1d, \Om2a, \Om3h\}, \{\Om1c, \Om1d, \Om2b, \Om3h\}\}.
\end{align*}
\end{theorem}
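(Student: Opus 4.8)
The plan is to run a bookkeeping argument that simply assembles the structural restrictions already proved in this section. First I would pin down the shape of a minimal generating set $X$: by Theorem~\ref{P1Main} four moves suffice, so a minimal generating set has exactly four moves; by Lemma~\ref{Type1Ex} it contains at least two $\Om1$ moves, and as noted in Remark~\ref{minimality} it must contain at least one move of type $\Om2$ and at least one of type $\Om3$. Since $2+1+1=4$, the set $X$ consists of exactly two $\Om1$ moves, one $\Om2$ move, and one $\Om3$ move.

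Next I would cut down the choices in each slot. By Lemma~\ref{Type1Ex} the two $\Om1$ moves of $X$ cannot be $(\Om1a,\Om1d)$ or $(\Om1b,\Om1c)$, so they form one of the four generating $\Om1$ pairs $(\Om1a,\Om1b)$, $(\Om1a,\Om1c)$, $(\Om1b,\Om1d)$, $(\Om1c,\Om1d)$. By Theorem~\ref{2c2dEx} the unique $\Om2$ move of $X$ is $\Om2a$ or $\Om2b$, and by Theorem~\ref{3a3h} the unique $\Om3$ move of $X$ is $\Om3a$ or $\Om3h$. This produces $4\cdot 2\cdot 2 = 16$ candidate sets. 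I would then apply Lemma~\ref{last} to rule out four of them: the sets built on $\{\Om1a,\Om1b,\Om3h\}$ with either $\Om2a$ or $\Om2b$, and the sets built on $\{\Om1c,\Om1d,\Om3a\}$ with either $\Om2a$ or $\Om2b$, are not generating. The twelve surviving candidates are exactly the sets containing $\Om3a$ whose $\Om1$ pair is one of $(\Om1a,\Om1c)$, $(\Om1b,\Om1d)$, $(\Om1a,\Om1b)$, together with the sets containing $\Om3h$ whose $\Om1$ pair is one of $(\Om1a,\Om1c)$, $(\Om1b,\Om1d)$, $(\Om1c,\Om1d)$ — which is precisely the list $\mathcal{A}\cup\mathcal{H}$. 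Combined with Theorem~\ref{P1Main}, which shows each of these twelve sets really is a generating set, this gives the claimed equality.

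Since every hard step has been discharged by the earlier lemmas (Lemma~\ref{Type1Ex}, Theorems~\ref{3a3h} and~\ref{2c2dEx}, and Lemma~\ref{last}) and by Theorem~\ref{P1Main}, I do not expect any genuine obstacle here. The only care needed is the routine verification that the sixteen candidates are enumerated correctly and that the four sets excised by Lemma~\ref{last} — namely the $\Om1$ pair $(\Om1c,\Om1d)$ paired with $\Om3a$, and the pair $(\Om1a,\Om1b)$ paired with $\Om3h$ — are exactly the ones not appearing in $\mathcal{A}\cup\mathcal{H}$, so that what remains is precisely the two collections $\mathcal{A}$ and $\mathcal{H}$ as displayed in the statement.
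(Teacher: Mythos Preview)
Your proposal is correct and is exactly the argument the paper uses: the theorem is stated as a summary of Section~\ref{sec:NonGenSets}, whose results (Lemma~\ref{Type1Ex} and Remark~\ref{minimality} for the $2{+}1{+}1$ shape and the admissible $\Om1$ pairs, Theorems~\ref{3a3h} and~\ref{2c2dEx} for the $\Om3$ and $\Om2$ slots, and Lemma~\ref{last} to discard the four spurious candidates) combine with Theorem~\ref{P1Main} to leave precisely the twelve sets in $\mathcal{A}\cup\mathcal{H}$. There is nothing to add.
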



\section{Generating sets of moves for isotopic spatial trivalent graph diagrams} \label{sec:GeneratingSets-graphs}

The goal of this section is to extend the results in Section~\ref{generating sets} to determine minimal generating sets of oriented Reidemeister-type moves for oriented spatial trivalent graph diagrams. As before, we say that a generating set $S$ of oriented Reidemeister-type moves for oriented spatial trivalent graph diagrams is minimal, if  there is no generating set $R$ of oriented Reidemeister-type moves such that $|R| < |S|$. 

As specified in the introduction, an isotopy of unoriented spatial trivalent graphs is generated by the moves $\Om 1-\Om 5$ depicted in Figure~\ref{SpatialMoves} (see~\cite{Ca}). Here we work with oriented (or directed) spatial trivalent graphs with trivalent vertices that are either sources or sinks,  as depicted in Figure~\ref{sourcesink}. 
\begin{figure}[h]
\begin{center}
\includegraphics[scale=.2]{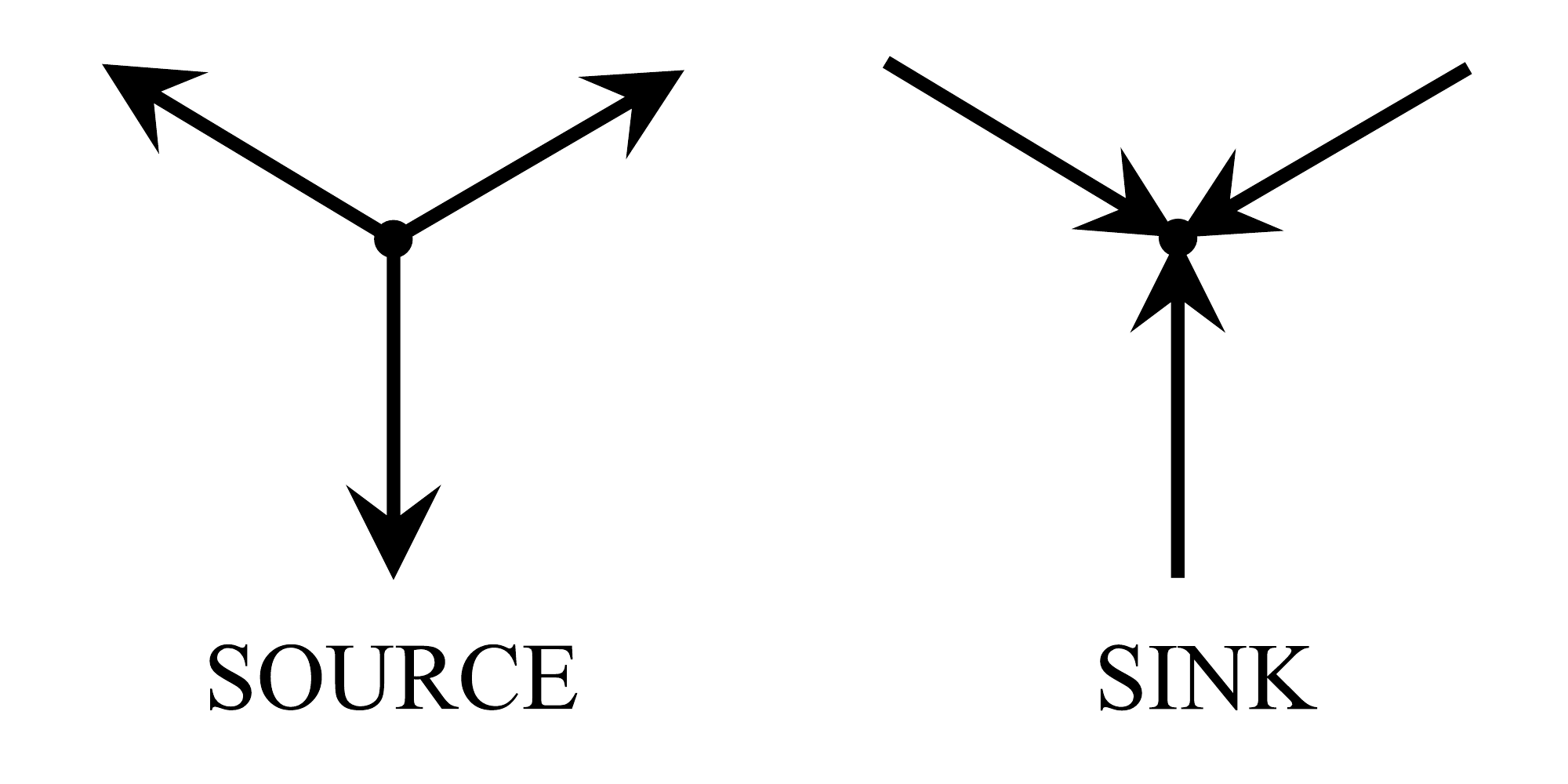}
\end{center}
\caption{Types of vertices in oriented spatial trivalent graphs}
\label{sourcesink}
\end{figure}

 For this, we need to work with oriented versions of Reidemeister-type moves for oriented spatial trivalent graph diagrams.
Figure~\ref{Type4Moves} displays the eight oriented versions of the move $\Om4$.  
\begin{figure}[h]
\begin{center}
\includegraphics[scale=.2]{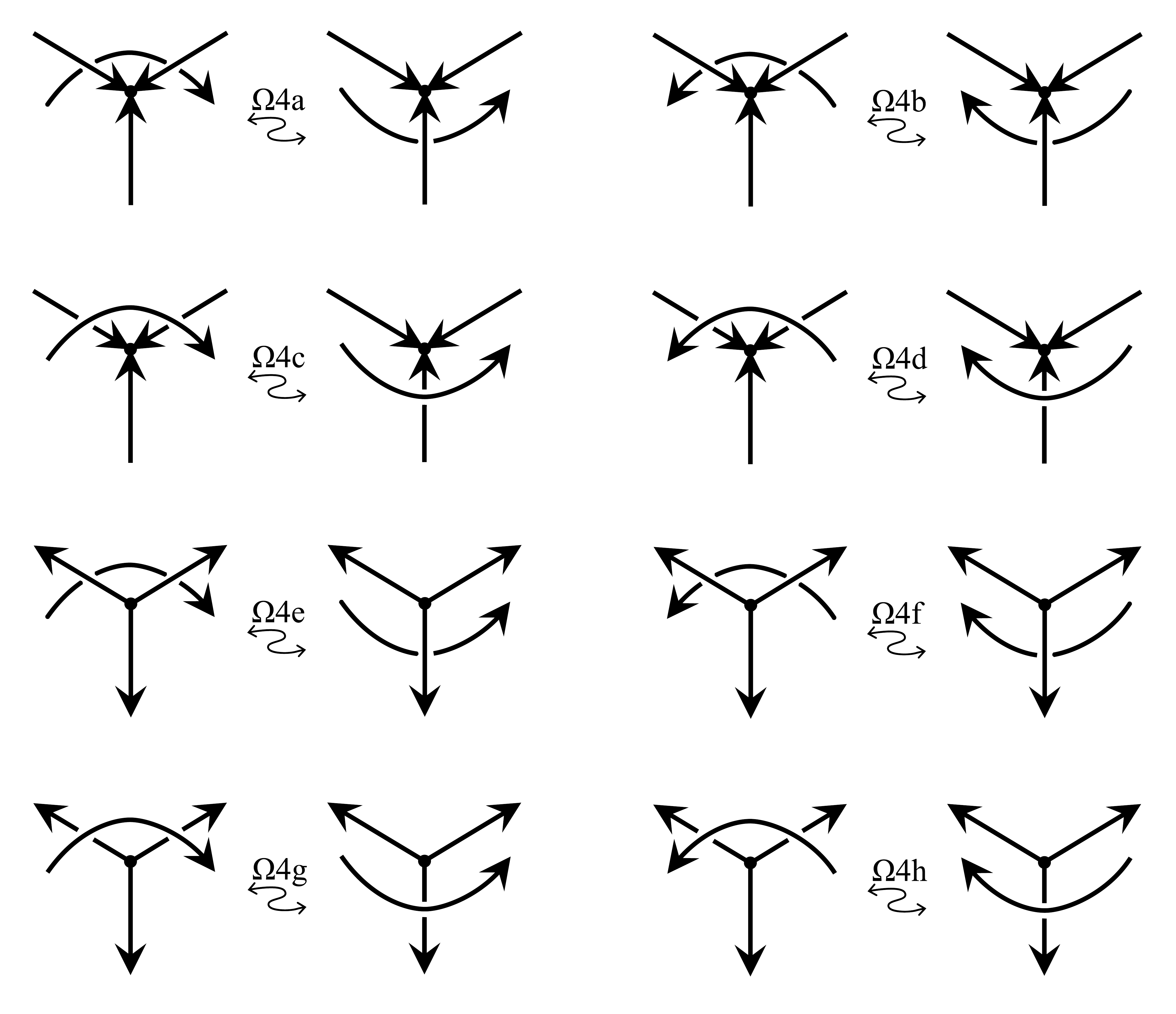}
\end{center}
\caption{Oriented Reidemeister-type moves $\Om4$}
\label{Type4Moves}
\end{figure}

Moves $\Om4a$--$\Om4d$ apply to sink vertices, while moves $\Om4e$--$\Om4h$ apply to source vertices. More specifically, during moves $\Om4a$ and $\Om4b$ ($\Om4c$ and $\Om4d$) a strand passes under (over) a sink vertex. Within the localized disk in which the move is applied, the moves $\Om4a$ and $\Om4d$ make a transition between one negative crossing and two positive crossings, but moves $\Om4b$ and $\Om4c$ make a transition between one positive crossing and two negative crossings. 
Similar distinctions are made among the moves $\Om4e$--$\Om4h$. During the moves $\Om4e$ and $\Om4f$ a strand passes under a source vertex, and during the moves $\Om4g$ and $\Om4h$ a strand passes over a source vertex. The moves $\Om4e$ and $\Om4h$ make a transition between one positive crossing and two negative crossings, while the moves $\Om4f$ and $\Om4g$ make a transition between one negative crossing and two positive crossings.

There are 4 oriented versions of the move $\Om5$, shown in Figure~\ref{Type5Moves}. The moves $\Om5a$ and $\Om5b$ involve sink vertices while moves $\Om5c$ and $\Om5d$ involve source vertices. Moreover, depending on the direction of the move, moves $\Om5a$ and $\Om5c$ introduce or remove a positive crossing; similarly, moves $\Om5b$ and $\Om5d$ introduce or remove a negative crossing.
\begin{figure}[h]
\begin{center}
\includegraphics[scale=.2]{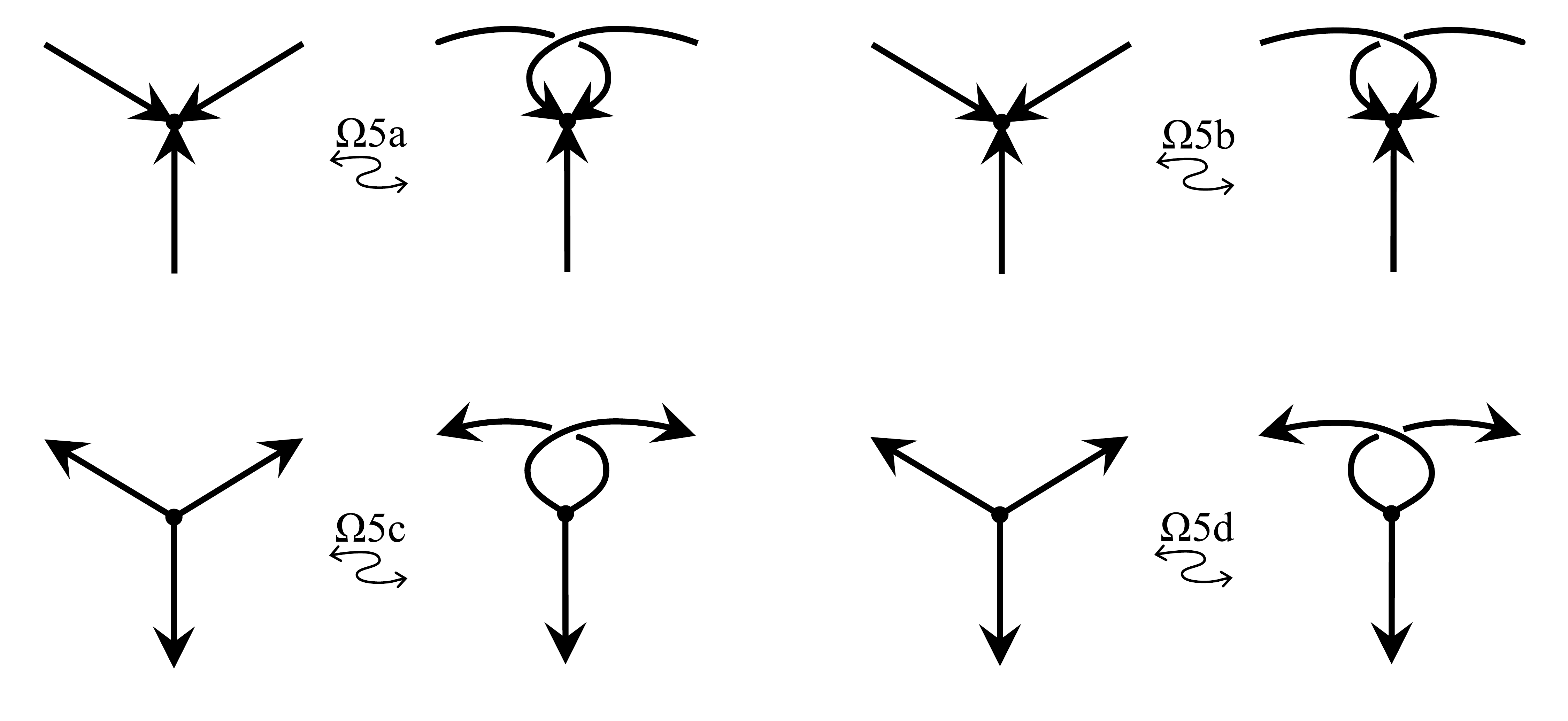}
\end{center}
\caption{Oriented Reidemeister-type moves $\Om5$}
\label{Type5Moves}
\end{figure}

In order to find all minimal generating sets of oriented Reidemeister-type moves for spatial trivalent graph diagrams, we need to determine the minimal number of oriented $\Om4$ and $\Om5$ moves required in a generating set of oriented Reidemeister-type moves. Then, we append those oriented type 4 and type 5 moves to the minimal generating sets for oriented knot diagrams listed in the collection $\mathcal{A}\cup\mathcal{H}$. Note that in the realizations of the oriented Reidemeister-type moves $\Om4$ and $\Om5$, we will apply classical Reidemeister moves $\Om1, \Om2$ and $\Om3$ (which have all been shown to be realizable from any set in the collection $\mathcal{A}\cup\mathcal{H}$).

\begin{theorem}\label{Type4Gen}
A minimal generating set of Reidemeister-type moves for oriented spatial trivalent graph diagrams with source and sink vertices contains four $\Om4$ moves, with one from each of the sets $\{\Om4a, \Om4b\}$, $\{\Om4c, \Om4d\}$, $\{\Om4e, \Om4f\}$, and $\{\Om4g, \Om4h\}$.
\end{theorem}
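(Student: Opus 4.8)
### Proof plan

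The plan is to prove the theorem in two halves: a \emph{sufficiency} half (four $\Om4$ moves, one from each of the four pairs listed, together with the moves $\Om5$ and the classical generators, generate all eight $\Om4$ moves) and a \emph{necessity} half (no generating set can contain fewer than four $\Om4$ moves, and in fact it must contain one from each of the four pairs). For the necessity half I would introduce a local count, analogous to the writhe/smoothing invariants used in Section~\ref{sec:NonGenSets}, that distinguishes the two members of each pair $\{\Om4a,\Om4b\}$, $\{\Om4c,\Om4d\}$, $\{\Om4e,\Om4f\}$, $\{\Om4g,\Om4h\}$ in a way that is preserved by \emph{all} classical Reidemeister moves, all $\Om5$ moves, and all $\Om4$ moves \emph{except} the ones in that particular pair. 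The natural candidate is a signed count attached to each vertex of the diagram — for instance, at a fixed sink vertex record the sum of the signs of the crossings lying on a small disk around that vertex, or more robustly the writhe contribution ``localized at a vertex'' obtained by smoothing. The key point, just as for $\omega+\mathrm{rot}$ in Lemma~\ref{Type1Ex}, is that a type~4 move changes the crossing data in the neighborhood of a specific vertex while leaving the global combinatorics of all other vertices and of the underlying knot shadow intact; so whatever invariant detects the over/under passage and the sign bookkeeping at sink vs.\ source vertices will be changed only by the relevant $\Om4$ moves. Once such an invariant is in hand, the argument that at least one move from each of the four pairs is needed is a direct analogue of the $\Om1$-counting argument: if, say, neither $\Om4a$ nor $\Om4b$ is in the set, then every move available preserves the ``under-passage count at sink vertices with the $\Om4a/\Om4b$ signature,'' so $\Om4a$ and $\Om4b$ cannot be realized.

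For the sufficiency half I would argue that each of the eight $\Om4$ moves can be converted into any other one within its pair, and that one can pass between the ``sink'' pairs and ``source'' pairs, using only classical moves (available by Theorem~\ref{P1Main}), one representative $\Om5$ move from the appropriate vertex type, and the chosen $\Om4$ representatives. Concretely: (i) within a pair $\{\Om4a,\Om4b\}$, the two moves differ by sliding the transverse strand to the other side of the vertex, which can be accomplished by a sequence of $\Om2$ and $\Om3$ moves pushing the strand past the vertex region — no new $\Om4$ move is needed, only the classical ones and possibly an $\Om5$ to regularize a crossing near the vertex; (ii) the ``over'' pairs $\{\Om4c,\Om4d\}$ and $\{\Om4g,\Om4h\}$ are obtained from the ``under'' pairs by the standard flype-type manipulation where one slides the strand around using $\Om2$/$\Om3$ moves and absorbs the resulting crossings via $\Om5$; and (iii) sink versus source is handled by noting that a move at a source vertex is the mirror/orientation-reversed analogue of a move at a sink vertex along the incident edges, so the realization diagrams are obtained by the corresponding reflection, with $\Om5c$ (resp.\ $\Om5d$) playing the role that $\Om5a$ (resp.\ $\Om5b$) plays at sinks. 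I would present the picture-proofs for one representative reduction in each of the three families and assert the rest follow by the stated symmetries, matching the style of Lemmas~\ref{3c3d}--\ref{A3moves}.

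The main obstacle I anticipate is not the sufficiency half — those are routine diagrammatic reductions — but pinning down the right localized invariant for the necessity half so that it genuinely separates the two moves \emph{within} each pair while being blind to everything else, including the $\Om5$ moves (which themselves create and destroy crossings at a vertex) and the other three $\Om4$ pairs. The subtlety is that $\Om5$ moves change crossing numbers near a vertex, so a naive crossing-sign sum at a vertex is \emph{not} preserved by $\Om5$; one must combine the local crossing count with a local rotation/writhe term, exactly as Remark~\ref{smooth} combines $C^\pm$ with $\omega$, so that the $\Om5$ contribution cancels. Getting that cancellation to work simultaneously for the sink moves and the source moves — and checking it against all eight $\Om4$ moves and all four $\Om5$ moves — is the delicate bookkeeping step; once it is done, the lower bound of four and the ``one from each pair'' refinement drop out immediately by the same contradiction scheme used to prove Theorems~\ref{3a3h} and~\ref{2c2dEx}. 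A secondary, smaller obstacle is making precise what ``localized at a vertex'' means for a quantity like the rotation number, which is inherently global; I would handle this by fixing a basepoint/collar on each edge incident to the vertex and counting only within the localized disk of the move, noting that all moves leave that collar data untouched outside their own localized disks.
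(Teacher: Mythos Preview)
Your plan would work in principle but is considerably more elaborate than the paper's argument, and point~(ii) of your sufficiency half is either wrong or badly phrased.

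For sufficiency, the paper simply shows that within each of the four pairs the two moves realize one another using only two oriented $\Om2$ moves together with the partner $\Om4$ move: slide the transverse strand across one incident edge with an $\Om2$, apply the partner $\Om4$, then slide back with another $\Om2$. No $\Om3$ or $\Om5$ moves enter at all. Your item~(ii), read literally, claims the over-pairs $\{\Om4c,\Om4d\}$ and $\{\Om4g,\Om4h\}$ are obtained from the under-pairs via $\Om2/\Om3/\Om5$ manipulations; if that were so, two $\Om4$ moves would already suffice and your necessity half would collapse. If you meant only that the \emph{picture-proofs} for the over-pairs are mirror reflections of those for the under-pairs, say so explicitly --- that is how the paper handles the remaining cases --- but as written the claim is false.

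For necessity, the paper avoids constructing any invariant. The separation into the four pairs is coarse: sink versus source is immediate because no Reidemeister-type move changes the type of a vertex, and over versus under (within each vertex type) is inherited directly from the unoriented theory, for which the paper cites~\cite{Ca}. Together these two observations force at least one move from each of the four listed pairs. The delicate localized invariant you propose --- and the $\Om5$-cancellation bookkeeping you flag as the main obstacle --- is therefore unnecessary here. Your approach is not wrong, and carrying it out would make the lower bound self-contained rather than relying on the unoriented result, but the paper's route is much shorter: the ``main obstacle'' you anticipate simply does not arise.
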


\begin{proof}
The oriented Reidemeister-type moves $\Om4a$--$\Om4d$ are defined on sink vertices, and moves $\Om4e$--$\Om4h$ are defined on source vertices. From this we can already conclude that a generating set of oriented Reidemeister-type moves for spatial trivalent graph diagrams must include at least two moves of type 4, specifically one move from $\Om4a$--$\Om4d$ and one move from $\Om4e$--$\Om4h$. Moreover, it is known (see for example~\cite[Section 2]{Ca}) that for the unoriented case, both versions of the move $\Om4$ from Figure~\ref{SpatialMoves} are needed, one that slides a strand under the vertex and another that slides a strand over the vertex. Hence, a generating set of oriented Reidemeister-type moves for spatial trivalent graph diagrams must contain at least two moves from $\Om4a$--$\Om4d$ and at least two moves from $\Om4e$--$\Om4h$; that is, for each type of vertex, there needs to be at least one $\Om4$ move that slides a strand under the vertex and another $\Om4$ move that slides a strand over the vertex. We show that exactly two such moves are needed for each type of vertex. 

The moves $\Om4a$ and $\Om4b$ allow passing a strand under a sink vertex. In the first row below, we show that the move $\Om4a$ can be realized through the use of the move $\Om4b$ together with two oriented $\Om2$ moves. In the second row below, we show that the move $\Om4b$ can be obtained by applying the move $\Om4a$ together with two oriented $\Om2$ moves.
\begin{center}
\noindent\includegraphics[scale=0.2]{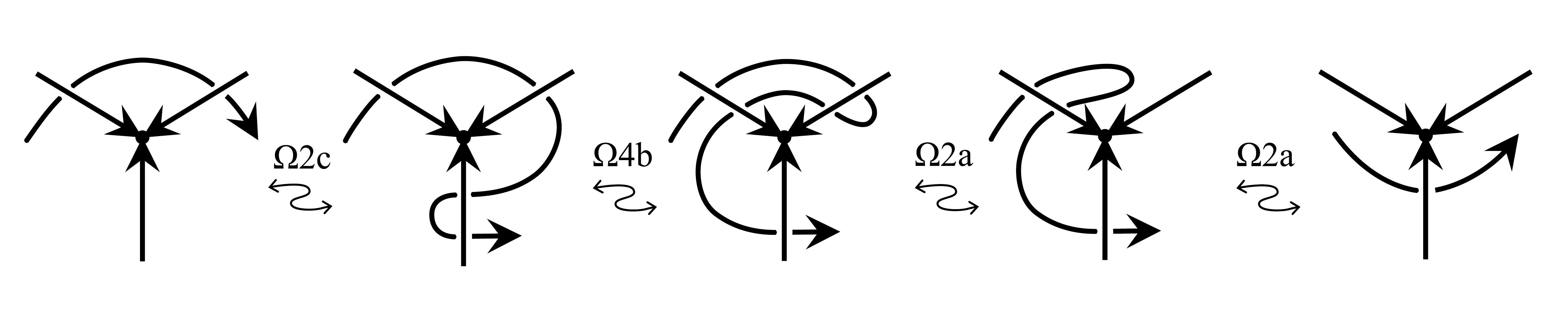}\\
\noindent\includegraphics[scale=0.2]{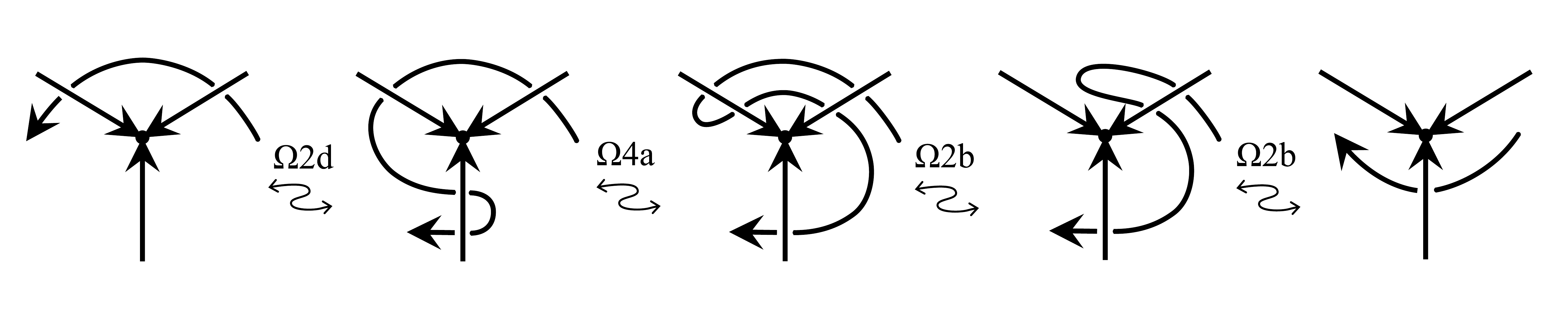}
\end{center}
Hence, only one of the moves $\Om4a$ or $\Om4b$ is necessary in a generating set of Reidemeister-type moves for oriented spatial trivalent graph diagrams. 

Similarly, the moves $\Om4c$ and $\Om4d$ allow passing a strand over a sink vertex. But as we prove below, only one of the moves $\Om4c$ or $\Om4d$ is necessary in a generating set of Reidemeister-type moves for oriented spatial trivalent graph diagram.
\begin{center}
\noindent\includegraphics[scale=0.2]{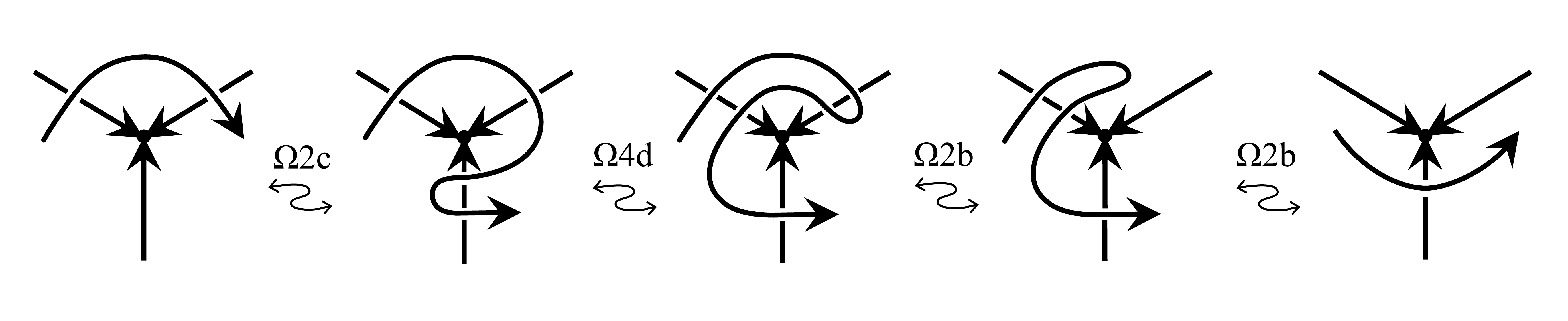}\\
\noindent\includegraphics[scale=0.2]{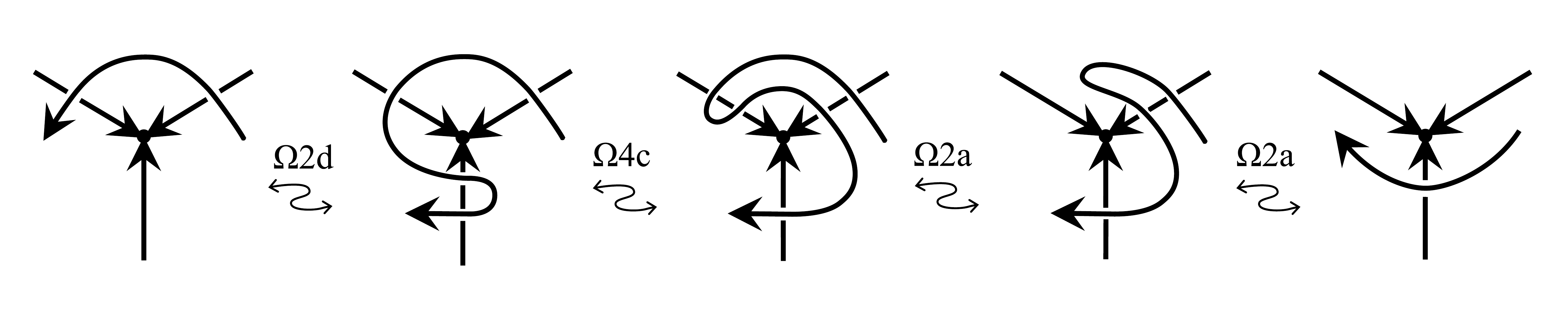}
\end{center}

The same situation applies to the moves $\Om4e$--$\Om4h$ involving a source vertex, and it can be shown in the same way that one of the moves $\Om4e$ or $\Om4f$ and also one of the moves $\Om4g$ or $\Om4h$ are needed in a generating set for oriented spatial trivalent graph diagrams.

Hence, a generating set of Reidemeister-type moves for oriented spatial trivalent graph diagrams with source and sink vertices contains four moves of type 4, with one move from each of the sets $\{\Om4a, \Om4b\}$, $\{\Om4c, \Om4d\}$, $\{\Om4e, \Om4f\}$, and $\{\Om4g, \Om4h\}$.
\end{proof}

\begin{theorem}\label{Type5Gen}
A minimal generating set of Reidemeister-type moves for oriented spatial trivalent graph diagrams with source and sink vertices contains two $\Om5$ moves, with exactly one move from each of the sets $\{\Om5a, \Om5b\}$ and $\{\Om5c, \Om5d\}$.
\end{theorem}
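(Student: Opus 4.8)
The plan is to follow the two-step pattern used in the proof of Theorem~\ref{Type4Gen}. First I would show that any generating set of oriented Reidemeister-type moves must contain at least one move from each of the pairs $\{\Om5a,\Om5b\}$ and $\{\Om5c,\Om5d\}$, so that in particular at least two $\Om5$ moves are needed; then I would exhibit explicit diagrammatic realizations showing that one move from each pair already suffices. Together these two steps give that the $\Om5$-portion of a minimal generating set consists of exactly two moves of the stated form, and combining this with Theorem~\ref{Type4Gen} and the knot-diagram results of Section~\ref{generating sets} yields the ten-move minimal generating sets for spatial trivalent graph diagrams.

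For the necessity step, the key observation is that the localized disk of a forward $\Om5a$ move contains a single trivalent vertex --- a sink --- together with one incident edge-arc, and that no Reidemeister-type move changes the number or the type of vertices. Hence every move used in a realization of $\Om5a$ is applied inside a disk whose only vertex is that one sink, so none of the source-vertex moves $\Om4e$--$\Om4h$, $\Om5c$, $\Om5d$ can appear. If the generating set contained no move from $\{\Om5a,\Om5b\}$, a realization of $\Om5a$ would therefore use only the classical moves $\Om1,\Om2,\Om3$ and the sink moves $\Om4a$--$\Om4d$; forgetting orientations, this would express the unoriented move $\Om5$ as a sequence of unoriented $\Om1$--$\Om4$ moves, contradicting the known fact that $\Om5$ cannot be generated by $\Om1$--$\Om4$ in the unoriented setting (cf.~\cite[Section~2]{Ca}). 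The symmetric argument applied to a forward $\Om5c$ move forces at least one move from $\{\Om5c,\Om5d\}$, and since a sink move cannot be applied near a source vertex and conversely, these are two independent requirements; thus a generating set contains at least two $\Om5$ moves. I expect this step to be routine.

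For the sufficiency step, I would draw four explicit realizations --- $\Om5a$ from $\Om5b$, $\Om5b$ from $\Om5a$, $\Om5c$ from $\Om5d$, and $\Om5d$ from $\Om5c$ --- each permitted to use the classical Reidemeister moves and the relevant $\Om4$ moves on the appropriate type of vertex. The recipe, modeled on the way $\Om4a$ and $\Om4b$ were interchanged in Theorem~\ref{Type4Gen}, is to introduce at the vertex a kink carrying the ``wrong'' crossing sign using the available $\Om5$ move, then slide a strand past the vertex with an $\Om4$ move so as to switch the relevant over/under information, and finally cancel the two superfluous crossings with a pair of oriented $\Om2$ moves. The main obstacle is getting these four diagrams right: one must choose the $\Om4$ and $\Om2$ moves so that every strand and vertex orientation stays consistent at each stage and the net effect is exactly the intended change of crossing sign at the vertex. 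This is the genuinely new ingredient, since --- unlike in the classical $\Om1$ analysis --- a crossing sign can be altered only with the help of a vertex, so the argument must exploit the interplay between the $\Om4$ and $\Om5$ moves rather than an invariant. Having both directions for each pair then shows that either representative works, so appending any one move from $\{\Om5a,\Om5b\}$ and any one from $\{\Om5c,\Om5d\}$ (together with the four $\Om4$ moves of Theorem~\ref{Type4Gen}) to any of the twelve sets in $\mathcal{A}\cup\mathcal{H}$ produces a minimal generating set of oriented Reidemeister-type moves.
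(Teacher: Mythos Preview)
Your overall two–step strategy (necessity via the sink/source dichotomy, sufficiency via explicit mutual realizations within each pair) is exactly what the paper does, and your necessity argument is in fact more carefully justified than the paper's. The problem is in the specific recipe you propose for the sufficiency step.

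You model the realization on the $\Om4a \leftrightarrow \Om4b$ interchange and propose: apply the other $\Om5$ move, then an $\Om4$ move, then cancel with a pair of $\Om2$ moves. This cannot work, for a simple parity reason. Each $\Om5$ and each $\Om4$ application changes the number of crossings in the localized disk by~$1$, while each $\Om2$ changes it by~$2$. The move you are trying to realize also changes the crossing count by~$1$ (from $0$ to $1$ or back). Your recipe uses exactly two odd–parity moves (one $\Om5$ and one $\Om4$) together with even–parity $\Om2$ moves, so the net change in crossing number is even; you can never land on the diagram with a single crossing. Concretely, after $\Om5$ (one crossing) and one $\Om4$ (zero or two crossings) there are not ``two superfluous crossings'' to cancel if the target has one crossing. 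There is also a more immediate geometric issue: right after applying the companion $\Om5$ move, no separate strand is in position to slide past the vertex, so $\Om4$ cannot be applied next without first introducing further crossings.

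The paper's realizations fix both problems by using an $\Om1$ move in place of your $\Om2$ moves: for example, $\Om5c$ is obtained as $\Om5d$ followed by $\Om1a$ followed by $\Om4f$ (and similarly $\Om5d$ from $\Om5c,\Om1d,\Om4e$; $\Om5a$ from $\Om5b,\Om1b,\Om4a$; $\Om5b$ from $\Om5a,\Om1c,\Om4b$). The $\Om1$ kink supplies the third odd–parity move needed for the crossing count, and its loop is precisely the ``strand'' that $\Om4$ then slides past the vertex. Replacing your $\Om2$ step by a single $\Om1$ kink on one of the edges incident to the vertex makes your plan go through.
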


\begin{proof}
The moves $\Om5a$ and $\Om5b$ involve a sink vertex whereas the moves $\Om5c$ and $\Om5d$ involve a source vertex. Thus, a set that generates all oriented Reidemeister-type moves must include at least two oriented $\Om5$ moves, one that involves a sink vertex and another one that involves a source vertex. We prove that in fact  one move is sufficient for each type of vertex. Below we show that the move $\Om5c$ can be realized through a sequence of the moves 
$\Om5d, \Om1a$ and $\Om4f$.
\begin{center}
\includegraphics[scale=.2]{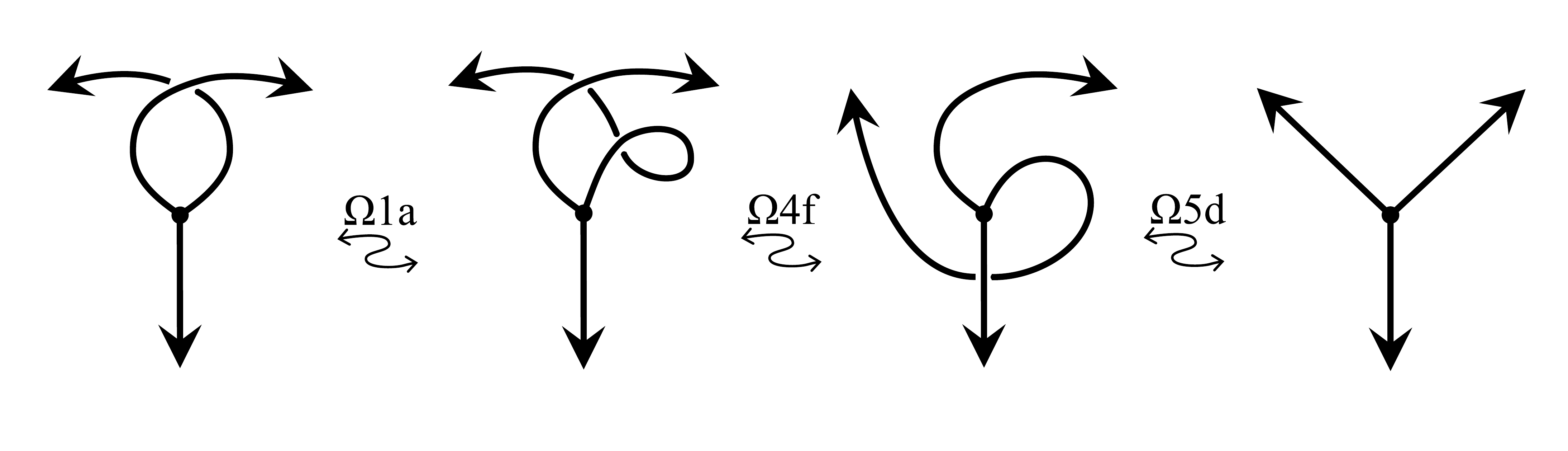}
\end{center}

Similarly, the move $\Om5d$ may be realized through a sequence of the moves $\Om5c, \Om1d$ and $\Om4e$, as shown below.
\begin{center}
\includegraphics[scale=.2]{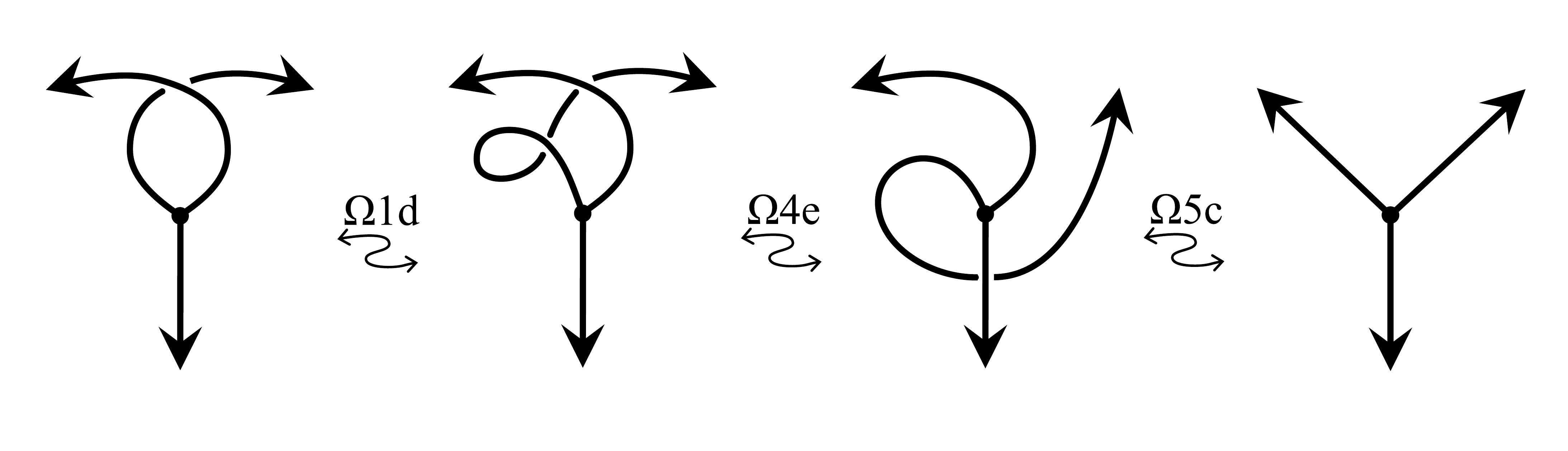}
\end{center}

The moves $\Om5a$ and $\Om5b$ involving a sink vertex can be used to realize each other. It can be shown in a similar way that the move $\Om5a$ can be realized through an application of the moves $\Om5b, \Om1b$ and $\Om4a$, and the move $\Om5b$ can be obtained via the moves  $\Om5a, \Om1c$ and $\Om4b$.
Therefore, the statement follows.
\end{proof}

As a result of Theorems \ref{Type4Gen} and \ref{Type5Gen}, combined with Theorem \ref{P1Converse}, a minimal generating set of oriented Reidemeister-type moves for spatial trivalent graph diagrams contains 10 moves: two $\Om1$ moves, one $\Om2$ move, one $\Om3$ move, four $\Om4$ moves, and two $\Om5$ moves. Specifically, the following follows.

\begin{corollary}
If $S$ is a minimal generating set of Reidemeister-type moves for oriented spatial trivalent graph diagrams with source and sink vertices, then $S$ contains 10 moves: four moves from any of the sets in the collection $\mathcal{A} \cup \mathcal{H}$ and one move from each of the  sets $\{\Om4a, \Om4b\}$, $\{\Om4c, \Om4d\}$, $\{\Om4e, \Om4f\}$, $\{\Om4g, \Om4h\}$, $\{\Om5a, \Om5b\}$ and $\{\Om5c, \Om5d\}$.
\end{corollary}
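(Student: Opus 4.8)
The plan is to combine the three structural results already in hand: Theorem~\ref{P1Converse} (the twelve sets in $\mathcal{A}\cup\mathcal{H}$ are exactly the minimal $4$-element generating sets of classical oriented Reidemeister moves), Theorem~\ref{Type4Gen} (a generating set needs at least one move from each of $\{\Om4a,\Om4b\}$, $\{\Om4c,\Om4d\}$, $\{\Om4e,\Om4f\}$, $\{\Om4g,\Om4h\}$, and four such moves suffice to generate all eight $\Om4$ moves once the classical moves are available), and Theorem~\ref{Type5Gen} (a generating set needs at least one move from each of $\{\Om5a,\Om5b\}$ and $\{\Om5c,\Om5d\}$, and two such moves suffice once $\Om1$ and $\Om4$ moves are available). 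The argument has three parts: a lower bound $|S|\ge 10$, a matching upper bound via an explicit $10$-element generating set, and a structural reading of $S$ once $|S|=10$.

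For the lower bound I would first isolate the key observation that the localized disk of any classical Reidemeister move contains no trivalent vertex, whereas every $\Om4$ and $\Om5$ move can be applied only inside a disk that contains a vertex; hence no $\Om4$ or $\Om5$ move can occur in a sequence realizing a classical move. Consequently the subcollection of classical ($\Om1,\Om2,\Om3$) moves in $S$ must by itself generate all sixteen classical oriented Reidemeister moves, so by Lemma~\ref{Type1Ex}, Theorem~\ref{3a3h}, and Theorem~\ref{2c2dEx} it contains at least two $\Om1$ moves, at least one $\Om2$ move, and at least one $\Om3$ move, i.e.\ at least four moves. By Theorem~\ref{Type4Gen} the set $S$ contains at least four $\Om4$ moves (one per pair), and by Theorem~\ref{Type5Gen} at least two $\Om5$ moves (one per pair). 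These three families are disjoint, so $|S|\ge 4+4+2=10$. For the upper bound I would take any $X\in\mathcal{A}\cup\mathcal{H}$ together with one move chosen from each of the six pairs in the statement: by Theorem~\ref{P1Main} the moves of $X$ generate all sixteen classical moves (in particular all $\Om1$ and $\Om2$ moves), feeding these into the realizations in the proof of Theorem~\ref{Type4Gen} generates all eight $\Om4$ moves, and feeding the resulting $\Om1$ and $\Om4$ moves into the realizations in the proof of Theorem~\ref{Type5Gen} generates all four $\Om5$ moves; thus this $10$-element set generates all $28$ Reidemeister-type moves. Hence a minimal generating set has cardinality exactly $10$, so $|S|=10$.

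Finally, since $|S|=10$ and the three lower bounds $\ge 4$, $\ge 4$, $\ge 2$ already sum to $10$, equality forces $S$ to contain exactly four classical moves, exactly four $\Om4$ moves with exactly one from each pair, and exactly two $\Om5$ moves with exactly one from each pair. Invoking the vertex-free observation again, the four classical moves in $S$ must generate all sixteen classical oriented Reidemeister moves, so they form a minimal $4$-element generating set and hence, by Theorem~\ref{P1Converse}, one of the twelve sets in $\mathcal{A}\cup\mathcal{H}$. Assembling these facts gives the stated description of $S$. The one step needing care is the ``no vertices in the localized disk'' observation, since it is what decouples the classical part of $S$ from the $\Om4,\Om5$ part and lets us apply Theorem~\ref{P1Converse}; the rest is bookkeeping with the cited theorems.
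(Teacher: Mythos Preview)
Your proposal is correct and follows the same route as the paper: combine Theorem~\ref{P1Converse}, Theorem~\ref{Type4Gen}, and Theorem~\ref{Type5Gen} to read off the structure of a minimal $S$. The paper in fact gives no argument beyond the sentence ``As a result of Theorems~\ref{Type4Gen} and~\ref{Type5Gen}, combined with Theorem~\ref{P1Converse}\ldots'', so your write-up is strictly more detailed; in particular you make explicit the vertex-free observation (no $\Om4$ or $\Om5$ move can appear in a realization of a classical move inside its localized disk), which is exactly what justifies applying Theorem~\ref{P1Converse} to the classical part of $S$ and which the paper leaves implicit.
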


Since we have proven that there are exactly 12 minimal generating sets of classical Reidemeister moves for oriented knot diagrams, we can count the total number of minimal generating sets of Reidemeister-type moves for spatial trivalent graph diagrams with source and sink vertices. From Theorem \ref{Type4Gen}, there are $2^4 = 16$ choices of four $\Om4$ moves, and from Theorem \ref{Type5Gen} there are $2^2=4$ choices of two $\Om5$ moves. Hence, there are exactly $12\cdot16\cdot4=768$ minimal generating sets of oriented Reidemeister-type moves for spatial trivalent graph diagrams with source and sink vertices.


\begin{thebibliography}{99}

\bibitem{AB} J. W. Alexander, G.B. Briggs, On types of knotted curves. \textit{Ann. of Math.} \textbf{28}, No. 1/4 (1926-1927), 562-586.

\bibitem{BEHY} K. Bataineh, M. Elhamdadi, M. Hajij, W. Youmans, Generating sets of Reidemeister moves of oriented singular links and quandles. \textit{J. Knot Theory Ramifications} \textbf{27}, No. 14, 1850064 (2018).

\bibitem{Ca} J. S. Carter, Reidemeister/Roseman-type moves to embedded foams in 4-dimensional space,  in \textit{New Ideas in Low Dimensional Topology},  Series on Knots and Everything, Vol. \textbf{56} (World Scientific, 2015), pp. 1-30.

\bibitem{KIG} L. H. Kauffman, Invariants of graphs in three-space. {\em Trans. Amer. Math. Soc.} \textbf{311} (1989), 697-710.

\bibitem{KKP} L. H. Kauffman, \textit{Knots and Physics}. 3rd edition. Ser. Knots Everything \textbf{1}, World Scientific, Singapore 2001.

\bibitem{Pol} M. Polyak, Minimal generating sets of Reidemeister moves. \textit{Quantum Topol.} \textbf{1} (2010), 399-411.

\bibitem{Reid} K. Reidemeister, Knotten und gruppen. \textit{Abh. Math. Sem. Univ. Hamburg} \textbf{5} (1926), 7-23.

\bibitem{Su} P. Suwara, Minimal generating sets of directed oriented Reidemeister moves. \textit{J. Knot Theory Ramifications} \textbf{67}, No. 04, 1750016 (2017).

\end{thebibliography}
\end{document}